\documentclass[10pt,reqno]{amsart}
\usepackage{amssymb,mathrsfs,color, pdfpages,slashed,pdfsync}
%\usepackage{tikz-cd}
%\usepackage{tikz}
%\usetikzlibrary{decorations.pathreplacing,angles,quotes}
%\usetikzlibrary{cd}
%\usetikzlibrary{matrix,arrows,decorations.pathmorphing}
%\allowdisplaybreaks
% ----------------------------------------------------------------
% AMS-LaTeX Paper - modified by S.-J. Oh
% **** -----------------------------------------------------------
\usepackage{enumerate}
\usepackage{graphicx}
\usepackage{xcolor} % A package to add color.
\usepackage{geometry}
\usepackage{amsfonts,amssymb,amsmath}
\usepackage{slashed}
\usepackage{ mathrsfs }
%\usetikzlibrary{arrows,calc}
%\usepackage{relsize}
%\usetikzlibrary{patterns}
%Andy's preamble
\usepackage[titletoc,title]{appendix}
%Pictures
\usepackage{wrapfig}
%\usepackage{tikz}
%\usetikzlibrary{arrows,calc,decorations.pathreplacing}

\usepackage{cancel}

\usepackage{cite}

\usepackage{nth}

\usepackage{hyperref}

\usepackage{marginnote}

%\usepackage{breqn}

%\parindent=0pt
%\allowdisplaybreaks
% COLORS ------------------------------------------------------------
\definecolor{green}{rgb}{0,0.8,0} % Redefines the color green.

\setlength{\oddsidemargin}{-0.2cm}
\setlength{\evensidemargin}{-0.2cm}

\setlength{\textwidth}{16.5cm}

%%%%%%%%%%%%%%%%%%%%%%%%%%%%%%%%%%%%%%%%%%%
%%%%%%%%%%%%%%%%%%%%%%%%%%%%%%%%%%%%%%%%%%%

% ----------------------------------------------------------------

\vfuzz2pt % Don't report over-full v-boxes if over-edge is small
\hfuzz2pt % Don't report over-full h-boxes if over-edge is small

%-----------------------------------------------------------------
 
%colors
\definecolor{deepgreen}{cmyk}{0,4,0,0}

%math environment shortcuts

\setlength{\marginparwidth}{2cm}

\newcommand{\Del}[1]{}

\numberwithin{equation}{section}

\newtheorem{theorem}{Theorem}[section]
\newtheorem{corollary}[theorem]{Corollary}%[section]
\newtheorem{lemma}[theorem]{Lemma}%[section]
\newtheorem{proposition}[theorem]{Proposition}%[section]
%[section]
%[section]
%[section]
%\newtheorem{rem}[thm]{Remark}
%\theoremstyle{remark}
%[section]
%[section]
%[section]

\newcommand{\pa}{\triangleright}

%text shortcuts

\usepackage{pgfplots}

% MATH -----------------------------------------------------------

\def\l{\langle}
\def\r{\rangle}

\def\f12{\frac 1 2}

\def\a{\alpha}

\def\ga{\gamma}

\def\De{\Delta}
\def\ep{\epsilon}
\def\la{\lambda}
\def\La{\Lambda}

\def\om{\omega}

%energy

\def\pa{\partial}
\def\les{\lesssim}

\def\e{\text{e}}

\def\th{\theta}
\def\Th{\Theta}
\newcommand{\D}{\mbox{$D \mkern-13mu /$\,}}

\begin{document}
	\title{On the motion of  charged particles in constant electromagnetic field: the parallel case}%[MKG]
	
	\author{Shuang Miao \and Shiwu Yang \and Pin Yu}

\date{}

\maketitle

\begin{abstract}
	This paper is devoted to  presenting a rigorous mathematical derivation for the classical phenomenon in Maxwell's  theory that  a charged particle moves along a straight line in  a constant electromagnetic field  if the initial velocity is parallel to the constant electromagnetic field. The particle is modeled by scaled solitons to a class of nonlinear Klein-Gordon equations and the nonlinear interaction between the charged particle and the electromagnetic field  is governed by the Maxwell-Klein-Gordon system. We show that when the size and amplitude of the particle are sufficiently small, the solution to the coupled nonlinear system exists up to any given time and the energy of the particle concentrates along a straight line. The method relies on the modulation approach for the study of stability for solitons and weighted energy estimates for the Maxwell-Klein-Gordon equations. 
\end{abstract}
%%%%%%%%%%%%%%%
%%%%%%%%%%%%%%%
\section{Introduction}

In the classical electromagnetic theory, the motion of a charged particle  generates electricity, which will then change the distribution of the electromagnetic field. In particular the interaction between the motion of the particle and the electromagnetic field is nonlinear. If the total charge of the particle is small compared to the background electromagnetic field so that their interaction can be neglected, then the motion of a charged particle in a constant electromagnetic field can be described in a simple way, depending on the initial velocity of the charged particle. If initially the charged particle moves along the direction of the electromagnetic field, then the particle moves at a constant speed along a straight line. If initially the particle moves in a direction which is perpendicular to the direction of the electromagnetic field, then the trajectory of the particle is a circle with radius depending on the total charge, the mass and the speed of the particle as well as the strength of the electromagnetic field. The motion of the particle for the other directions can be decomposed into the sum of these two basic scenarios.

Motivated by the related  works \cite{Hintz24:BH:geo:1},\cite{Hintz24:BH:geo:2}, \cite{Hintz24:BH:geo:3}, \cite{psedoStuart},  \cite{einsteinStuart}, \cite{yang4} on Einstein's geodesic hypothesis, the aim of the present paper is to give a rigorous mathematical verification of the above physical phenomenon for the case when the particle moves along the direction of the constant electromagnetic field. The test particle is modeled by a scaled complex scalar field governed by a class of nonlinear Klein-Gordon equation with small size and amplitude. Taking into account of the effect of the charged particle, the motion of a charged particle in electromagnetic field can be described by the nonlinear Maxwell-Klein-Gordon system (MKG). We assume that the scaled scalar field is close to some stable soliton and the background electromagnetic field is constant. For the case when the initial velocity of the particle is parallel to the direction of the background electromagnetic field, we show that the MKG system admits a solution to any given time $T>0$ with the property that the energy of the particle concentrates along a straight line parallel to the constant electromagnetic field and the Maxwell field is close to the background constant electromagnetic field. This in particular improves the previous short time result \cite{Stuart09:MKG} on background electromagnetic field with uniformly bounded connection field by Long-Stuart. 

\subsection{The Maxwell-Klein-Gordon system and stable solitons}
The nonlinear interaction between a massive  charged particle and the background electromagnetic field can be  described by the massive MKG equations. Let $A$ be a real valued $1$-form in Minkowski space $\mathbb{R}^{1+3}$. The covariant derivative $D_\mu$ acting on a complex valued scalar field $\phi$ via the relation 
\begin{align*}%\label{D def}
	D_\mu \phi =\partial_\mu \phi+\sqrt{-1} A_\mu \phi. 
\end{align*}
The commutator of this covariant derivative gives the electromagnetic field $F$ with components
\begin{align}\label{connection def}
	F_{\mu\nu}=\partial_\mu A_\nu-\partial_\nu A_\mu,
\end{align}
which can also be viewed as the $2$-form $F=dA$. The MKG equation is  a system for the connection field $A$ and the scalar field $\phi_{\ep, \delta}$
\begin{equation}
	\label{Eq:MKGe-Fphi}%MKG equation  - Fphi
	\begin{cases}
		\partial^\mu F_{\mu\nu}=-J[\phi_{\ep, \delta}]_{\nu}=-\Im(\phi_{\ep, \delta}\cdot\overline{D_\nu \phi_{\ep, \delta}}),\\
		\Box_A\phi_{\ep, \delta}:=D^\mu D_\mu \phi_{\ep, \delta}=\mathcal{V}_{\ep, \delta}'(\phi_{\ep, \delta}).
	\end{cases}
\end{equation}
Here and throughout the paper the indices are raised and lowered with respect to the flat Minkowski metric. $\mathcal{V}_{\ep,\delta}(\cdot)$ is a potential function which will be specified later. We expect that the electromagnetic field $F$ is close to some constant field $F_b$. After Lorentz transformation, we may assume that the non-vanishing component of the constant electromagnetic field $F_b$ is $(F_b)_{12}=1$, that is the associated electric field vanishes and the magnetic field is $(0, 0, 1)$. The particle is approximated by a soliton in the following sense 
\begin{align}\label{phi scale}
	\phi_{\ep, \delta}=\delta \phi(\ep^{-1}t, \ep^{-1}x)
\end{align}
with some positive constants $\ep, \delta>0$, where $\ep$ stands for the size of the particle and $\delta$ measures the amplitude of the particle. We expect that the scaled scalar field $\phi$ is close to a stable soliton to the nonlinear Klein-Gordon equation
\begin{equation}
	\label{eq:NLW:p}
	\Box \phi-m^2 \phi+|\phi|^{p-1}\phi=0
\end{equation}
in the Minkowski space $\mathbb{R}^{1+3}$, where $m>0$ is a constant, denoting the mass of the particle  and 
$$\Box:=-\partial_{t}^{2}+\sum_{i=1}^{3}\partial_{i}^{2}.$$
The solitons or stationary waves are the special solutions of the form $\e^{i\om t}f_\om(x)$ such that 
\begin{equation}
	\label{eq:ground}
	\Delta f_\om-(m^2-\om^2)f_\om+|f_\om|^{p-1}f_\om=0.
\end{equation}
The Lorentzian symmetries of the equation and the Minkowski spacetimes can generate a family of special solutions to the full equation \eqref{eq:NLW:p}. Denote 
\begin{equation*}
	\la=(\om, \theta,\xi, u)\in \La  \equiv\left\{(\om, \theta, \xi, u)\in \mathbb{R}^8: |u| <1,\quad \frac{p-1}{6-2p}<  \frac{\om^2}{m^2} < 1 \right\}.
\end{equation*}
In particular we may require that $ 1 <p <\frac{7}{3}$. This set corresponds to the stable solitons, for which the associated energy is convex in terms of the parameter $\omega$. For $\la\in \La$, define
\begin{align}
	\label{z}
	& z(x;\la)= \rho P_u(x-\xi)+(I-P_u)(x-\xi),\quad \rho=(1-|u|^2)^{-\f12},\\
	\notag
	& \Th(x;\la)=\th-\om u\cdot z(x;\la),\quad P_u y=\frac{y\cdot u}{|u|^2}u.
\end{align}
Here $P_u$ is the projection operator in the direction $u\in \mathbb{R}^3$ and $I$ is the identity map. Let
%we now define the gauge invariant soliton
\begin{equation}\label{def: phiS, psiS}
	\begin{split}
		&\phi_S(x;\la):=\e^{i\Th(x;\la)}f_\om(z(x;\la)),\\
		& \psi_S(x;\la):=\e^{i\Th(x;\la)}\left(i\rho\om
		f_{\om}(z(x;\la)) -\rho u\cdot \nabla_z f_\om(z(x;\la))\right).
	\end{split}
\end{equation}
Rewrite the system \eqref{Eq:MKGe-Fphi} in terms of $\phi$ instead of $\phi_{\ep, \delta}$. Let 
\begin{equation}\label{connection scale}
	A^{\ep}(t, x)=\ep A(\ep t, \ep x),\quad F^\ep (t, x)=\ep^2 F(\ep t, \ep x). 
\end{equation}
Then we can compute that 
\begin{align*}
	&\Box_{A}\phi_{\ep, \delta}(t, x)=\delta \ep^{-2} \Box_{A^\ep}\phi (\ep^{-1}t, \ep^{-1}x ), \\
	& \pa^\mu F_{\mu\nu}(t, x)= \ep^{-3} \pa^\mu F^\ep_{\mu\nu}(\ep^{-1}t, \ep^{-1}x)=-\delta^2 \ep^{-1}\Im(\phi\cdot \overline{D_\nu   \phi }).
\end{align*}
This means that we are reduced to study the rescaled system  
\begin{equation}
	\label{eq:MKG:scaled}%MKG equation  - Fphi
	\begin{cases}
		\partial^\mu F_{\mu\nu}^\ep =-\delta^2 \ep^2 J[\phi]_{\nu}=-\delta^2 \ep^2 \Im(\phi \cdot\overline{D_\nu  \phi}),\\
		\Box_{A^\ep }\phi=\mathcal{V}'(\phi)
	\end{cases}
\end{equation}
describing the motion of a large massive charged particle on a slowly varying electromagnetic field. 
For simplicity  we consider the pure power case and take the potential $\mathcal{V}$ to be
\begin{align}
\label{eq:scaledV}
	\mathcal{V}(\phi)=\frac{m^2}{2}|\phi|^2-\frac{1}{p+1}|\phi|^{p+1}=\delta^{-2}\ep \mathcal{V}_{\ep, \delta}(\phi_{\ep, \delta}).
\end{align}
Consider the Cauchy problem to the MKG system \eqref{eq:MKG:scaled} with initial data $(E^\ep(0), B^\ep(0), \phi_0, \phi_1)$ such that the following compatibility condition hold
\begin{align}\label{EM initial constraint}
	\begin{split}
		\phi(0, x)=\phi_0(x),\quad D_t\phi(0, x)=\phi_1, \\ 
		\textrm{div} (E^\ep(0))=-\delta^2\ep^2 \Im(\phi_0 \cdot\overline{   \phi_1}),\quad \textrm{div}(B^\ep(0))=0. 
	\end{split}
\end{align}
Here $(E,B)$ is the canonical decomposition of the electromagnetic field:
\begin{align}\label{EM decom}
	F_{i0}:=E_{i},\quad F_{ij}:=-\epsilon_{ijk}B^{k},\quad   1\leq i,j,k\leq 3,
\end{align}
in which  $\epsilon_{ijk}$ is the canonical volume form of $\mathbb{R}^{3}$. We remark here that $(E^\ep, B^\ep)$ scales  through the relation \eqref{connection scale}.

  Assume that the initial scaled  scalar field $\phi$ is close to some stable soliton up to second order derivatives in the following sense 
\begin{align}\label{soliton initial}
	\int_{\mathbb{R}^3} \left|D^{k+1}_j(\phi_0-\phi_S(x;\la_0))\right|^2 +\left|D^k_j(\phi_1-\psi_S(x;\la_0))\right|^2dx \leq \ep^2,\quad j\leq 3, k\leq 2
\end{align}
for some $\la_0\in \La$. 
In addition, we assume that the following weighted energy bound  
\begin{align}\label{soliton initial weight}
	\int_{\mathbb{R}^3} (1+|x|^2)\left(\left|D_j(\phi_0-\phi_S(x;\la_0))\right|^2 +\left|\phi_0-\phi_S(x;\la_0)\right|^2+\left| \phi_1-\psi_S(x;\la_0)\right|^2 \right)dx \leq \ep^2,\quad j \leq 3.
\end{align}
Since we are assuming that the initial electromagnetic field is close to the constant constant electromagnetic field $(E=0, B=(0, 0, 1))$, after scaling, we could assume that the initial electric field $E^\ep(0)$ is small and the initial magnetic field $B^{\ep}(0)$ is close to the constant field $\ep^2(0, 0, 1)$. More precisely, we assume that 
\begin{align}\label{EM initial}
	\int_{\mathbb{R}^3} |\nabla^k E^\ep(0)|^2+|\nabla^k(B^\ep(0)-\ep^2(0, 0, 1))|^2 dx\leq \delta^2\ep^2,\quad k\leq 2. 
\end{align}
Here $\delta$ is a small constant which is independent of $\ep$. We will derive the main estimates under Lorentz gauge condition:
\begin{align}\label{Lorentz gauge}
	\partial^{\mu}A_{\mu}^{\epsilon}=0,
\end{align}
We also consider the difference of the connection:
\begin{align}\label{connection diff}
	\tilde{A}^{\epsilon}:=A^{\epsilon}-A^{\epsilon}_{b}
\end{align}
with the explicit connection 1-form $A^{\epsilon}_{b}$:
\begin{align}\label{connection background}
	(A_{b}^{\epsilon})_{0}=(A^{\epsilon}_{b})_{3}=0,\quad (A^{\epsilon}_{b})_{1}=-\frac12x_{2}\epsilon^{2},\quad (A_{b}^{\epsilon})_{2}=\frac12x_{1}\epsilon^{2}.
\end{align}
Now we are now able to state our main theorem. 
\begin{theorem}\label{main thm}
Consider the Cauchy problem to the system \eqref{eq:MKG:scaled}.
	Let the assumptions \eqref{EM initial constraint},  \eqref{soliton initial}, \eqref{soliton initial weight}, and \eqref{EM initial} hold. Let $T>0$ be any given time. Then for all $2\leq p < \frac{7}{3}$ and all $\la_0=(\om_0, \th_0, 0, u_0)\in\La $ such that $u_0$ is parallel to $B_b=(0,0, 1)$, there exists a positive number $\ep^*$ depending only on  $\la_0$, $m$, $p$ and $T$ such that for all positive constants  $\delta, \ep <\ep^*$, %if the initial data $\phi_0$, $\phi_1$ are close to some stable soliton in the sense that 
	%	\begin{equation}
		%		\label{eq:ID:sca:w0}
		%		\int_{\mathbb{R}^3} (1+|x|^2) (|\phi_0(x)-\phi_S(x;\la_0) |^2 +|\nabla(\phi_0(x)-\phi_S(x;\la_0)) |^2+ |\phi_1(x)-\psi_S(x;\la_0)| )dx \leq \ep^2,
		%	\end{equation}
	there exists a unique solution $(F^{\epsilon}(t,x), \phi(t, x))$ defined on $  [0, T/\ep]\times  \mathbb{R}^3$ to the equation \eqref{eq:MKG:scaled} with the
	following property: there is a $C^1$ curve $$\la(t)=(\om(t), \th(t),	\eta(t)+u_0t, u(t)+u_0)\in\La $$ such that
	\begin{equation*}
		|\la(0)-\la_0|\leq C\ep,\quad
		| (\dot\om(t), \dot\th(t), \dot\eta(t), \dot u(t))|\leq C\ep^2,\quad \forall
		t\in[0,T/\ep]
	\end{equation*}
	and the solution $\phi$ is close to the translated solitons
	\begin{equation*}
		\begin{split}
			\|\phi(t, x)-\phi_S(x;\la(t))\|_{H^3(\mathbb{R}^3)}+\|\pa_t\phi(t, x)-\psi_S(x;\la(t))\|_{H^2(\mathbb{R}^3)}
			\leq C\ep,\quad \forall t\in [0, T/\ep]
		\end{split}
	\end{equation*}
	under the Lorentz gauge condition. Moreover the electromagnetic field (or the connection field) is close to the constant electromagnetic field in the following sense
	\begin{align*}
		\|\tilde{A}^{\epsilon}(t,\cdot)\|_{L^{\infty}}^{2}+\|\partial\partial^{s}\tilde{A}^{\epsilon}(t,\cdot)\|_{L^{2}}^{2}\leq C \epsilon^{2},\quad s\leq 2,\quad \forall t\leq T/\ep.
	\end{align*}
	Here  the constant $C$ depends only on $m$, $p$, $T$ and $\la_0$.
\end{theorem}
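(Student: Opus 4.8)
\emph{Proof plan.} The plan is to run the modulation method for soliton stability, coupled to weighted energy estimates for the Maxwell field, organized as a continuity (bootstrap) argument on the long interval $[0,T/\ep]$. First I would look for the solution in the form $\phi(t,x)=\phi_S(x;\la(t))+\varphi(t,x)$ with $\la(t)\in\La$ a $C^1$ curve, and fix $\la(t)$ by requiring $\varphi$ — together with a frame-corrected time derivative of it — to be symplectically orthogonal, at $\phi_S(\cdot;\la(t))$, to the eight infinitesimal generators of the symmetries of \eqref{eq:NLW:p}: the $U(1)$ phase rotation, the three spatial translations, the three Lorentz boosts, and differentiation in $\om$. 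The implicit function theorem makes this choice legitimate as long as $\varphi$ stays small in $H^1$. Because $\tfrac{p-1}{6-2p}<\om^2/m^2<1$ — the convexity regime for the soliton energy in $\om$ — the linearized Hamiltonian $L_\la$ at $\phi_S(\cdot;\la)$ is coercive modulo its eight-dimensional kernel, so under the orthogonality conditions $\|\varphi\|_{H^1}^2\lesssim\langle L_\la\varphi,\varphi\rangle$. On a maximal subinterval $[0,T_*]\subseteq[0,T/\ep]$ I make the bootstrap hypotheses: \emph{(i)} $\|\varphi(t)\|_{H^3}+\|\pa_t\varphi(t)\|_{H^2}\le C_1\ep$; \emph{(ii)} $|(\dot\om,\dot\th,\dot\eta,\dot u)(t)|\le C_1\ep^2$, with $\la(t)=(\om(t),\th(t),\eta(t)+u_0 t,u(t)+u_0)$; \emph{(iii)} $\|\tilde A^\ep(t)\|_{L^\infty}^2+\sum_{s\le2}\|\pa\pa^s\tilde A^\ep(t)\|_{L^2}^2\le C_1^2\ep^2$. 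The aim is to recover \emph{(i)}--\emph{(iii)} with $C_1$ replaced by a fixed smaller constant $C_0$, which forces $T_*=T/\ep$ and yields the theorem.

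\medskip\noindent
Next, substituting the ansatz into \eqref{eq:MKG:scaled}, projecting the $\varphi$-equation onto the eight symmetry directions and using the orthogonality relations yields an ODE system $(\dot\om,\dot\th,\dot\eta,\dot u)=\mathcal F(\la,\varphi,\tilde A^\ep)$ whose right-hand side consists of terms quadratic in $\varphi$ and of Maxwell-coupling terms, namely pairings of $A^\ep=A^\ep_b+\tilde A^\ep$ against soliton quantities that are exponentially localized about the centre $\xi_0+\eta(t)+u_0 t$. Here the hypothesis $u_0\parallel e_3=B_b$ is decisive: the centre then stays at bounded $x_1,x_2$ for all $t\le T/\ep$, so $|A^\ep_b|\lesssim\ep^2$ on the soliton core, and with \emph{(iii)} one gets $|\mathcal F|\lesssim\ep^2+C_1^2\ep^2\lesssim\ep^2$, improving \emph{(ii)}; integrating, $u(t)=O(\ep^2 t)$ and $\eta(t)=O(\ep^2 t)$, so the particle's centre equals $\xi_0+u_0 t$ up to $O(\ep)$ for $t\le T/\ep$ — its energy stays concentrated along the straight line $\{\xi_0+u_0 s\}$. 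Physically, the would-be transverse forcing on the centre is the analogue of the Lorentz force $\dot\xi\times B_b$, which vanishes to leading order precisely because $\dot\xi\approx u_0\parallel B_b$; this is the mechanism behind the theorem, and it is also why transverse (circular) motion — where $A^\ep_b$ grows linearly along the trajectory — lies outside the reach of this method.

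\medskip\noindent
Then I would build an augmented energy $\E(t)$ at the $H^3\times H^2$ level: the coercive form $\langle L_\la\varphi,\varphi\rangle$ plus the higher covariant energies $\sum_{k\le2}\int(|D\,\pa^k\varphi|^2+\dots)$, together with momentum-type corrections compensating the moving frame (the lower bound $p\ge2$ enters here, guaranteeing enough regularity of $|\phi|^{p-1}\phi$ to run these estimates via a core/tail splitting of the nonlinearity). Differentiating $\E$ in $t$, the principal terms cancel by the equations, leaving, after using \emph{(ii)}, \emph{(iii)} and the localization of the coupling to the soliton core where $|A^\ep|\lesssim\ep^2$ in the parallel case, an inequality of the shape $\dot{\E}\lesssim\ep^2\E+\ep^2\sqrt{\E}+\ep^4$. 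A Grönwall estimate on $[0,T/\ep]$ then gives $\sqrt{\E(t)}\lesssim e^{C\ep^2 t}\big(\sqrt{\E(0)}+\ep^2 t\big)\lesssim\sqrt{\E(0)}+\ep T\lesssim\ep$, using \eqref{soliton initial}; coercivity converts this into the improvement of \emph{(i)}. The quantitative heart of the matter is that every error term carries one extra power of $\ep$ relative to the target size $\ep$, so that multiplying by the length $T/\ep$ of the interval still leaves $O(\ep)$.

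\medskip\noindent
Finally, for the Maxwell field: in the Lorentz gauge \eqref{Lorentz gauge}, and since $\Box A^\ep_b=0$ and $\pa^\mu(A^\ep_b)_\mu=0$, the perturbation obeys $\Box\tilde A^\ep_\nu=-\delta^2\ep^2 J[\phi]_\nu$. By \emph{(i)} the current $J[\phi]$ is within $O(\ep)$, in the relevant norms, of $J[\phi_S(\cdot;\la(t))]$, an exponentially localized source travelling along $\xi_0+u_0 t$ whose time derivatives are controlled by $|\dot\la|\lesssim\ep^2$. Running the standard energy estimates for $\Box\tilde A^\ep$ up to two derivatives together with the $r$-weighted (conformal-type) energy estimates for Maxwell--Klein--Gordon, fed by the weighted data bounds \eqref{soliton initial weight} and \eqref{EM initial}, produces the $L^2$ bounds on $\pa\pa^s\tilde A^\ep$ and, via the weights and a Hardy--Sobolev inequality, pointwise control of $\tilde A^\ep$ itself; since every source contribution carries the prefactor $\delta^2\ep^2$ while the interval has length $\le T/\ep$, one obtains $\|\tilde A^\ep(t)\|_{L^\infty}+\sum_{s\le2}\|\pa\pa^s\tilde A^\ep(t)\|_{L^2}\lesssim\delta\ep+\delta^2\ep T\le C_0\ep$ for $\delta,\ep$ small, improving \emph{(iii)}. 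With \emph{(i)}--\emph{(iii)} all recovered, $T_*=T/\ep$ and the asserted bounds on $\la(t)$, $\phi-\phi_S(\cdot;\la(t))$ and $\tilde A^\ep$ follow; local existence is classical and uniqueness comes from an energy estimate for the difference of two solutions in the gauge-fixed formulation. The main obstacle throughout is the long time scale $T/\ep$: one must arrange that the modulation parameters absorb the secular part of the forcing so that the remainder is genuinely $O(\ep^2)$, estimate the Maxwell coupling \emph{only} on the soliton core (which is exactly where the parallel hypothesis keeps $A^\ep_b$ of size $\ep^2$), and extract uniform-in-time smallness of $\|\tilde A^\ep\|_{L^\infty}$ from the weighted estimates despite a source that is active for time $\sim\ep^{-1}$; reconciling the $H^1$-level coercivity of $L_\la$ with the $H^3$ regularity demanded by the theorem, in the presence of gauge-covariant derivatives and a moving frame, is the other technical crux.
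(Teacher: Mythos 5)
Your overall framework — modulation ansatz, orthogonality via the symmetry generators, a continuity argument on $[0,T/\ep]$, and the observation that $u_0\parallel B_b$ keeps the soliton core at bounded $x_1,x_2$ so that $|A^\ep_b|\lesssim\ep^2$ there — matches the paper's strategy. But there are two linked gaps in the Maxwell step, and they are fatal as written.

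First, your bootstrap hypothesis \emph{(iii)} posits only $\sum_{s\le 2}\|\pa\pa^s\tilde A^\ep\|_{L^2}\lesssim\ep$, while the paper's internal bootstrap \eqref{eq:cond4:A} is the much stronger $\|\pa\pa^s\tilde A^\ep\|_{L^2}\lesssim\ep^2$. This strength is not cosmetic: it is what, via Hardy plus the $H^2\hookrightarrow C^{0,1/2}$ embedding, produces the pointwise estimate $|\tilde A^\ep(t,x)|\lesssim\ep^2$ for $|x-\xi(t)|\lesssim 1$ (the bound \eqref{eq:tildeA:bd}). That near-core $\ep^2$ bound on $\tilde A^\ep$ is exactly what makes the \emph{linear}-in-$\tilde A^\ep$ modulation forcing $\langle 2i(\tilde A^\ep)^\mu\pa_\la\phi_S,\pa_\mu\phi_S\rangle$ of size $\ep^2$ rather than $\ep$. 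Your step ``with \emph{(iii)} one gets $|\mathcal F|\lesssim\ep^2+C_1^2\ep^2$'' silently replaces the linear coupling by the quadratic one $|\tilde A^\ep|^2$; the linear, order-one-in-$\phi_S$ term under your $O(\ep)$ bootstrap is $O(\ep)$, which gives $|\dot\gamma|\sim\ep$ and lets the modulation curve drift $O(T)$ over the interval $[0,T/\ep]$ — outside $\La_{\delta_0}$. The same $\ep^2$ near-core bound on $\tilde A^\ep$ is also needed in the almost-conservation estimates (Proposition~\ref{propcon}) and in the weighted mass argument.

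Second, even at the correct $O(\ep^2)$ target, your Maxwell estimate does not close with $\delta$ independent of $\ep$. A standard multiplier estimate for $\Box\tilde A^\ep_\nu=-\delta^2\ep^2 J[\phi]_\nu$ with $\|J[\phi]\|_{L^2}\sim 1$ gives $\|\pa\pa^s\tilde A^\ep\|_{L^2}\lesssim\delta\ep^2+\delta^2\ep^2\cdot(T/\ep)=\delta\ep^2+T\delta^2\ep$, which is $\lesssim\ep^2$ only if $\delta^2\lesssim\ep$ — exactly the restriction the theorem is designed to remove. ``Conformal-type'' weighted estimates do not rescue this: the source is exponentially localized but persistent, and the $r$-weighted exterior estimates in the paper are applied to the \emph{scalar field} (Section~\ref{sec:weightE}), not to $\tilde A^\ep$. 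What closes the Maxwell bootstrap in the paper is using the timelike vector field $X=\pa_t+u_0^k\pa_k$ as multiplier and then \emph{integrating by parts} to transfer $X$ onto the soliton current, gaining $|X\,\Im(D^{l_1}\phi_S\cdot\overline{D^{l_2}D_\nu\phi_S})|\lesssim\ep$ because the soliton propagates along $X$; the boundary terms are bounded by $\|\pa^{l+1}\tilde A^\ep\|_{L^2}\lesssim\ep^2$ via Hardy. This is the mechanism that replaces $\delta^2 T\ep$ by $\delta^2\ep^2$ and allows $\delta$ independent of $\ep$; without it your argument reproves only the old result with $\delta\lesssim\ep^{1/2}$. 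A related technical point you do not address is that the higher-order energy estimates for the scalar field are run with the \emph{modified} integral curve $\tilde\la$ of $V(\la)$ (decomposition \eqref{eq:decomp:new}); this is necessary because the power $p<7/3<3$ of the nonlinearity does not support four derivatives on the original modulation curve.
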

Back to the original problem regarding the motion of charged particles on constant electromagnetic field or the nonlinear  system \eqref{Eq:MKGe-Fphi}, assume that the potential $\mathcal{V}_{\ep, \delta }$ verifies  the scaling relation   \eqref{eq:scaledV} and the scalar field $\phi_{\ep, \delta}$  is close to some scaled stable soliton traveling in the direction of  the constant magnetic field. If the initial electromagnetic field is close to the constant electromagnetic field $(E=0, B=(0, 0, 1))$, then as long as the size $\ep$  and   amplitude $\delta$ of the particle  are sufficiently small the particle stays close to a straight line parallel to the direction of the magnetic field up to any given time $T>0$.
In particular for the case when the charged particle moves along the direction of the constant magnetic field, the trajectory of the particle is a straight line up to any given large time. This gives a rigorous mathematical verification for the classical phenomenon in electromagnetic theory for the case when the  initial direction of the  particle is parallel to the constant magnetic field. 

We emphasize here that the small constant $\delta$ does not rely on the small parameter $\ep$. In other words, the smallness of the  amplitude of the particle is independent of the size of the particle. This is one of the major improvements compared to the previous results in \cite{Eamonn06:MKG},
\cite{Stuart09:MKG},  in which the above stability result was established under the assumption $\delta<\ep^{\frac{1}{2}}$. Moreover, our result indicates that the particle moves along  a straight line up to any given time $T>0$ while the stability result was only shown for a short time in \cite{Stuart09:MKG}. Most importantly, the background electromagnetic field in Long-Stuart's work was required to decay in the sense that the associated connection field $A$ (see the relation \eqref{connection def} between the electromagnetic field and the connection field) is uniformly bounded, which excludes the most interesting case  that the background electromagnetic field is constant.

\subsection{Related works}

Existence of solitons to the MKG system with small coupling constant had been studied in \cite{Eamonn06:MKG}. Such soliton could be viewed as small perturbation of the solution $(A=0, \phi=e^{i\omega t}f_{\omega}(x))$ with vanishing connection field. Orbital stability on a fixed slowly varying electromagnetic background had also been proven in \cite{Eamonn06:MKG}. Short time stability for the coupled MKG system had then been established in \cite{Stuart09:MKG} on a background electromagnetic field with uniformly bounded connection field. The analogue of Einstein's geodesic hypothesis which says that a massive test particle moves along a timelike geodesic in spacetime was studied in \cite{psedoStuart}, \cite{einsteinStuart}, \cite{yang4}. In this situation the motion of the particle is governed by the gravitation of the spacetime. Gluing black hole spacetime along a timelike geodesic  is recently shown to be possible in a series works \cite{Hintz24:BH:geo:1}, \cite{Hintz24:BH:geo:2}, \cite{Hintz24:BH:geo:3} by Hintz.

\subsection{Comments on the proof}
We now briefly elaborate the main ideas for the proof. We rely on modulation approach to study stability of solitons and weighted  energy estimates for solutions to the MKG system to control the error terms arising from the corresponding growing connection field at spatial infinity.

Our improvement on the stability time (from short time to arbitrary large given time) and the size of the amplitude (the small constant $\delta$ is independent of the size of the particle $\epsilon$) is inspired by the work \cite{yang4} of the second author on Einstein's geodesic hypothesis. 
To solve the nonlinear system \eqref{eq:MKG:scaled}, we use bootstrap argument. We first investigate the orbital stability of solitons on a fixed slowly varying electromagnetic field which is close to the constant electromagnetic field $(E=0, B=(0, 0, \ep^2))$. Under the bootstrap assumption 
\begin{equation}
\label{eq:bt:intro}
\|\pa \pa^s \tilde{A}^\ep(t, x) \|_{L_x^2}\leq 10^3 \ep^2,\quad s\leq 2, \quad t\leq T/\ep,
\end{equation}
in which $\tilde{A}^\ep $ is the difference of the full connection field $A^\ep$ and the connection field associated to the given constant electromagnetic field, we establish the orbital stability of solitons up to time $T/\ep$ for any given $T>0$. 
In Minkowski spacetime, the orbital stability for all time follows by decomposing the conserved energies and mass around the soliton under the othogonality condition. The associated energies and mass are no longer conserved in the presence of connection field. The stability time then relies on the growth of these almost conserved quantities. The key observation is that the leading error terms (see Proposition \ref{propcon}) 
\[
\int_0^t \int_{\mathbb{R}^3}    \langle   i X(A^\ep)^\mu \pa_\mu\phi_S , \phi_S\rangle dx ds
\] 
is of size $\ep^2$ instead of $\ep$ for all time $t\leq T/\ep$ after integration by parts and the fact that 
\[
|X \phi_S|\les \ep,\quad X=\pa_t +u_0^k\pa_k.
\]
Here $u_0$ is the initial moving direction of the particle which by  our assumption is also the direction of the constant magnetic field. The above smallness follows from the fact that the soliton (or the particle) moves along   the constant magnetic field. This is one of the key points allowing us to obtain orbital stability result to time $T/\ep$.

Another issue arises from the growing connection field associated to the given constant electromagnetic field. A typical such error term is of the form 
\begin{align*}
\int_{\mathbb{R}^3} |A_b^\ep|^2 |v|^2 dx=\int_{\mathbb{R}^3} \ep^2 |x|^2 |v|^2dx.
\end{align*}
Here $v\approx \phi -\phi_S$ is the remaining error term of the full solution and the soliton.  This term is a small error term if the connection field $A_b$ is uniformly bounded  as required in Long-Stuart's work \cite{Stuart09:MKG}. However such restriction does not include the case when the background electromagnetic field is constant. The idea to control the above error term arising from the growing background connection field, which is inspired by the works \cite{YangYu:MKG:smooth}, \cite{Yang:mMKG} on the study of global dynamics of MKG system, is to derive weighted energy estimate for the full solution out side a forward light cone. 

Notice that the soliton $\phi_S$ decays exponentially. It suffices to  derive the following type of weighted energy estimates for the full solution
 \begin{align*}
	\int_{t+R_0\leq r } (1+|x|^2) (|D\phi|^2+m^2|\phi|^2)dx \les  1. 
\end{align*}
Here $R_0$ is chosen such that outside the ball with radius $R_0$ on the initial hypersurface the initial data are small 
\begin{align*}
	\int_{|x|\geq R_0} (1+|x|^2)(|\nabla\phi_0|^2+|\phi_0|^2+|\phi_1|^2) dx\leq 10\ep^2.
\end{align*}
In view of finite speed of propagation for nonlinear wave equations, we could first understand the full solution in the exterior region $\{t+R_0\leq r\}$ with small initial data. Since it suffices to derive a weighted energy estimate for the solution, we rely on the energy method for massive MKG equations used in \cite{Yang:mMKG}. See more details in Section \ref{sec:weightE}. 

Once we have stability results for solitons on fixed background electromagnetic field verifying the above bootstrap assumption \eqref{eq:bt:intro}, to close the argument and improve these bootstrap assumption, we analyze the Maxwell equation   under the Lorentz gauge condition (first equation of \eqref{eq:MKG:scaled}), which is equivalent to the wave equation 
\begin{align*}
	\pa^\mu F^\ep_{\mu\nu}=\Box A^{\ep}_{\nu}-\pa_\nu\pa^\mu A^\ep_\mu =\Box \tilde{A}^\ep =-\delta^2\ep^2 J[\phi]_{\nu}
\end{align*}
for the connection field $\tilde{A}^\ep$. Since the full solution $\phi$ is close to the soliton which is of size $1$, the standard energy estimate roughly leads to the bound 
\begin{align*}
\|\pa \tilde{A}^\ep\|_{L^2}\les \ep^2 +\delta^2 \ep^2\int_0^t \|J[\phi]\|_{L^2}ds\les \ep^2 +\delta^2 \ep^2 t\leq \ep^2 +T\delta^2 \ep.
\end{align*}
If we want to improve the bootstrap assumption, we may need to require that 
\begin{align*}
\delta^2\leq \ep,\quad \delta \leq \ep^{\frac{1}{2}}. 
\end{align*}
The key observation to improve the size of the amplitude $\delta$ is that the above mentioned vector field $X=\pa_t +u_0^k\pa_k$ is timelike and one can use this timelike vector field as multiplier instead of $\pa_t$ to do the energy estimate. Notice that the full solution $\phi$ is close to the soliton $\phi_S$,  the main part of $J[\phi]$ is $J[\phi_S]$. Integration by parts then indicates that 
\begin{align*}
\int_0^t \int_{\mathbb{R}^3} J[\phi_S] X\phi_S dxdt=O(1)- \int_0^t \int_{\mathbb{R}^3} XJ[\phi_S] \phi_S dxdt.
\end{align*} 
As we have pointed out that the soliton travels along the direction $X$ in spacetime, we could gain an extra smallness 
\[
|XJ[\phi_S]|\les \ep. 
\]
This allows us to improve the bootstrap assumption for the connection field $\tilde{A}^\ep$ under the condition that the amplitude $\delta$ is sufficiently small which is independent of the size $\ep$ of the particle.

The paper is organized as follows: In Section \ref{review soli Mink} we  review some properties of stable solitons to nonlinear Klein-Gordon equation in Minkowski space and briefly introduce the modulation approach. In Section \ref{stability soli fixed EM}, we show that the stable solitons are orbital stable up to time $T/\ep$  in a fixed electromagnetic field which is close to the constant magnetic field $B^{\ep}=(0,0,\ep^{2})$. We also establish the higher order energy estimates and weighted energy estimates for the full solution. Finally in Section \ref{recover EM} we derive energy estimates for the connection field and improve the bootstrap assumption for the connection field used in Section \ref{stability soli fixed EM}, hence finishing the proof for the main Theorem \ref{main thm}.

\noindent \textbf{Acknowledgments.} S. Miao is supported by the National Key R\&D Program of China 2021YFA1001700 and the National Science Foundation of China 12426203, 12221001. S. Yang is supported by the National Key R\&D Program of China 2021YFA1001700 and  the National Science Foundation of China  12171011,  12141102, 12426203. P. Yu is  supported by the National Science Foundation of China 11825103, 12141102 and MOST-2020YFA0713003.

%%%%%%%%%%%%%%%%
%%%%%%%%%%%%%%%%
\section{Stable solitons in Minkowski space} \label{review soli Mink}
In this section, we review the necessary ingredients for solitons to the nonlinear wave equation \eqref{eq:NLW:p}. 
It has been shown, for example in \cite{manysolLions}, that the  elliptic equation \eqref{eq:ground} admits infinite many solutions, of which the ground state minimizes the associated energy
\[
E_\om(v)=\f12\int_{\mathbb{R}^3}|\nabla
v|^2+(m^2-\om^2)|v|^2-\frac{2}{p+1}|v|^{p+1}dx
\]
for fixed $\omega\in (-m, m)$. We summarize here the properties for the ground state which are crucial  to our subsequent analysis. 
\begin{proposition}
	\label{prop:ground:f}
	For $1<p< 5$ and  $-m<  \om  <m$, there exists a unique, positive, radial symmetric solution $f_\om(x)\in H^4(\mathbb{R}^3)\cap C^4(\mathbb{R}^3)$ to equation \eqref{eq:ground}, which is decreasing in $|x|$ with the following properties:
	\begin{itemize}
		\item[1]. Exponential decay up to fourth order derivatives
		\begin{equation*}
			%\label{decayoffom}
			|\nabla^\a f_{\om}(x)|\leq C(\om)e^{-c(\om)|x|},\quad \forall x\in \mathbb{R}^3, \quad |\a|\leq 4
		\end{equation*}
		for some positive constant $c(\om)$;
		\item[2]. Asymptotic behavior
		\begin{equation*}
			%\label{decayrateoff}
			\lim\limits_{|x|\rightarrow\infty}\frac{f_{\om}'(|x|)}{f_{\om}(|x|)}=-\sqrt{m^2-\om^2};
		\end{equation*}
		\item[3]. Scaling of the solutions
		\begin{equation*}
			%\label{scalling}
			f_\om(x)=(m^2-\om^2)^{\frac{1}{p-1}}f(\sqrt{m^2-\om^2}x),
		\end{equation*}
		where $f(x)$ is the solution for $m^2-\om^2=1$;
		\item[4]. Energy identities 
		\begin{equation*}
			%\label{lempropf}
			\frac{3(p-1)(m^2-\om^2)}{2\|\nabla f_\om\|_{L^2}^2}=\frac{5-p}{2\|f_\om\|_{L^2}^2}
			=\frac{(p+1)(m^2-\om^2)}{\|f_\om\|_{L^{p+1}}^{p+1}}.
		\end{equation*}
	\end{itemize}
\end{proposition}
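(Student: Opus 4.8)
The plan is to strip the parameter $\omega$ by the scaling of item~3, invoke the classical existence/symmetry/uniqueness theory for the resulting normalized equation, and then extract the decay estimates and the energy identities. Concretely, a direct substitution shows that $f$ solves $\Delta f-f+|f|^{p-1}f=0$ on $\mathbb{R}^3$ if and only if $f_\omega(x):=(m^2-\omega^2)^{\frac1{p-1}}f(\sqrt{m^2-\omega^2}\,x)$ solves \eqref{eq:ground} (here $-m<\omega<m$ makes $m^2-\omega^2>0$, so the scaling is legitimate). This gives item~3 tautologically and reduces every other assertion to the normalized equation $\Delta f-f+|f|^{p-1}f=0$.

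For existence I would minimize $\frac12\int_{\mathbb{R}^3}|\nabla v|^2+|v|^2\,dx$ over $\{v\in H^1(\mathbb{R}^3):\int|v|^{p+1}=1\}$ (equivalently, over the Nehari manifold; the minimizer coincides with the least-energy, i.e.\ ground state, solution). Since $1<p<5$ is $H^1(\mathbb{R}^3)$-subcritical and the radial embedding $H^1_{\mathrm{rad}}\hookrightarrow L^{p+1}$ is compact, Schwarz symmetrization produces a nonnegative radially decreasing minimizer, and after a fixed rescaling its Euler--Lagrange equation is $\Delta f-f+|f|^{p-1}f=0$; the strong maximum principle upgrades $f\ge0$ to $f>0$, so $|f|^{p-1}f=f^p$. (Existence of solutions to \eqref{eq:ground} is also recorded in \cite{manysolLions}.) The Gidas--Ni--Nirenberg moving-plane method then shows that \emph{any} positive $H^1$ solution is radially symmetric about some point and strictly decreasing in $|x|$; translating, we center it at $0$. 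Uniqueness of the positive radial solution of $\Delta f-f+f^p=0$ on $\mathbb{R}^3$ is the theorem of Kwong (extending Coffman and McLeod--Serrin); this is the only nontrivial input and I would simply quote it.

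Regularity is an elliptic bootstrap: $f\in H^1\hookrightarrow L^6$ gives $f^p\in L^{6/p}_{\mathrm{loc}}$, hence $f\in W^{2,6/p}_{\mathrm{loc}}$, and iterating (using $f>0$ everywhere, so $f^p$ is as regular as $f$) yields $f\in C^\infty(\mathbb{R}^3)$, in particular $C^4$. For the decay I would use the radial ODE $f''+\tfrac2r f'-f+f^p=0$: since $f(r)\to0$, for large $r$ the term $f^p$ is lower order, and comparison with the explicit solutions $r^{-1}e^{\pm r}$ of $(rh)''=rh$ forces $0<f(r)\le Ce^{-r}$ and then the sharp asymptotic $f(r)=c\,r^{-1}e^{-r}(1+o(1))$, whence $f'(r)/f(r)\to-1$; undoing the scaling, $f_\omega'(r)/f_\omega(r)\to-\sqrt{m^2-\omega^2}$, which is item~2. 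Applying interior Schauder estimates on the annuli $\{R\le|x|\le R+2\}$ and feeding in the pointwise decay of $f$ propagates exponential decay to $\nabla^\alpha f$ for $|\alpha|\le4$; rescaling gives item~1, and together with this decay one also gets $f_\omega\in H^4(\mathbb{R}^3)$.

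Finally, the energy identities of item~4 come from two integral identities. Testing \eqref{eq:ground} against $f_\omega$ gives the Nehari relation $\|\nabla f_\omega\|_{L^2}^2+(m^2-\omega^2)\|f_\omega\|_{L^2}^2=\|f_\omega\|_{L^{p+1}}^{p+1}$, and testing against $x\cdot\nabla f_\omega$ gives the Pohozaev identity (in dimension $3$, valid thanks to the decay in item~1) $\tfrac12\|\nabla f_\omega\|_{L^2}^2+\tfrac32(m^2-\omega^2)\|f_\omega\|_{L^2}^2=\tfrac3{p+1}\|f_\omega\|_{L^{p+1}}^{p+1}$. Writing $a=\|\nabla f_\omega\|_{L^2}^2$, $b=(m^2-\omega^2)\|f_\omega\|_{L^2}^2$, $c=\|f_\omega\|_{L^{p+1}}^{p+1}$, these read $c=a+b$ and $\tfrac12a+\tfrac32b=\tfrac3{p+1}c$; solving the linear system gives $a=\tfrac{3(p-1)}{5-p}\,b$ and $c=\tfrac{2(p+1)}{5-p}\,b$, and dividing through one reads off $\tfrac{3(p-1)(m^2-\omega^2)}{2\|\nabla f_\omega\|_{L^2}^2}=\tfrac{5-p}{2\|f_\omega\|_{L^2}^2}=\tfrac{(p+1)(m^2-\omega^2)}{\|f_\omega\|_{L^{p+1}}^{p+1}}$ (all positive since $1<p<5$). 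The one genuinely hard point in the whole argument is the uniqueness of the positive ground state, which rests on a delicate ODE shooting/monotonicity analysis that I would take from the literature; everything else is standard variational calculus, moving planes, elliptic regularity, and ODE comparison.
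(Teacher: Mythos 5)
Your proposal is correct and follows essentially the same route as the paper: Proposition~\ref{prop:ground:f} is justified in the paper by citing the classical literature (Berestycki--Lions, Strauss for existence of the ground state; McLeod/Kwong for uniqueness of the positive radial solution; standard elliptic regularity and ODE asymptotics for smoothness and decay), and the energy identities are obtained by testing \eqref{eq:ground} against $f_\omega$ and $x\cdot\nabla f_\omega$. Your reduction-by-scaling to the normalized equation, the Nehari/Pohozaev algebra in dimension~$3$ (including the arithmetic $a=\tfrac{3(p-1)}{5-p}b$, $c=\tfrac{2(p+1)}{5-p}b$), and the role of $f>0$ in making $|f|^{p-1}f=f^p$ smooth for the bootstrap are all accurate and match the paper's intent; the only cosmetic difference is that you quote Kwong's uniqueness theorem whereas the paper cites McLeod's extension, but these are the same line of results.
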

Existence of ground state has been shown in \cite{exsolitonLions}, \cite{odelions}, \cite{exsoliStrauss}. K. McLeod \cite{uniqueSolMcLeod} proved the uniqueness of the ground state. Smoothness of the ground state as well as asymptotic behaviors could be found in \cite{exsolitonLions},  \cite{moduStuart}, \cite{decaySerrin}. Using integration by parts, the  energy identities follow by multiplying the equation \eqref{eq:ground} with $f_\om$, $x\cdot \nabla f_\om$ respectively.

For the curve $\la(t)=(\om(t), \th(t), \xi(t), u(t))$ such that 
\begin{equation*}
	\dot{\om}=0,\quad \dot{\th}=\frac{\om}{\rho}, \quad \dot{\xi}=u, \quad\dot{u}=0,
\end{equation*}
it can be shown by direct computation that $\phi_S(x;\la)$ defined in the introduction also solves the nonlinear wave equation \eqref{eq:NLW:p}. Here and in the following, we use the dot to denote the derivative with respect to time variable $t$.

For $\la\in \La$, denote
\begin{equation*}
	V(\la)=(0, \frac{\om}{\rho}, u, 0).
\end{equation*}
Then we have the crucial relation 
\begin{align*}
	\psi_S(\la;x)=\pa_\la \phi_S(\la;x)\cdot V(\la) 
\end{align*}
as inner product of vectors in $\mathbb{R}^8$. Moreover we obtain the important identity
\begin{equation}
	\label{idenofphiS}
	\Delta_x \phi_S-m^2\phi_S+|\phi_S|^{p-1}\phi_S-\pa_\la\psi_S \cdot V(\la)=0.
\end{equation}
Here the Laplacian operator $\Delta_x$ is with respect to the variable $x$ (similarly $\Delta_z$ is taken with respect to the variable $z$ defined in \eqref{z}). 

For a $C^1$ curve
$\la(t)=(\om(t), \theta(t), \xi(t), u(t))\in \La$, let $\ga(t)$ be the modulated curve such that
\begin{equation}
	\label{lagaV}
	\dot{\la}=\dot{\ga}+V(\la),\quad \la(0)=\ga(0).
\end{equation}
Stability problems related to the above solitons  to the equation \eqref{eq:NLW:p} in Minkowski space has been studied extensively in the past decades. Orbital stability, the solution to the full equation \eqref{eq:NLW:p} exists for all time and stays close to some translated soliton, was first shown by   J. Shatah in \cite{stableShatah} for radial symmetric initial data and was later discussed in a general framework in \cite{stableShatah1}, \cite{stableShatah2}. Their approach relies on the fact that the energy $E_\om(f_{\om})$ is strictly convex in $\om$ if initially is close to some stable soliton $\phi_S(x; \lambda_0)$ with $\lambda_0\in \Lambda$. This condition on $\la_0$ is sharp in the sense that the solitons are unstable if the energy $E_\om(f_{\om})$ is concave in $\om$, see for example \cite{unstableshatah}, \cite{unstable2shatah}. Alternatively, the modulation approach, pioneered by M. Weinstein in
\cite{ModulWeinstein}, additionally controls the modulation curve, see the work of D. Stuart in \cite{moduStuart}.  Regrading the asymptotic stability of solitons, which says that the solution asymptotically approaches some soliton, we refer to the recent advances in \cite{Chengong24:KG:stab:soliton} and references therein. 

We now briefly describe the modulation approach. Notice that equation \eqref{eq:NLW:p} locally has a unique solution $(\phi(t, x), \pa_t\phi(t, x))\in C^1([0, t^*); H^1\times L^2)$. Decompose the solution $\phi$ as follows
\begin{equation}
	\label{eq:decomp}
	\begin{split}
		&\phi(t, x)=\phi_S(x;\la(t))+\e^{i\Th(x;\la(t))}v(t, x),\\
		&\pa_t\phi(t, x)=\psi_S(x;\la(t))+\e^{i\Th(x;\la(t))}w(t, x)
	\end{split}
\end{equation}
for a $C^1$ curve $\la(t)\in \La $ such that the following orthogonality condition hold
\begin{equation}
	\label{orthcond}
	\l \e^{-i\Th} \pa_\la \phi_S, w\r_{dx}=\l\e^{-i\Th}\pa_\la \psi_S, v\r_{dx}, \quad \forall
	t\in\mathbb{R}.
\end{equation} 
Here in this paper for complex valued functions $a(x)$, $b(x)$, $\l a(x), b(x)\r_{dx}$ stands for  the inner product in the function space $L^2(\mathbb{R}^3)$
\[
\l a, b \r_{dx}=\f12\int_{\mathbb{R}^3}a\bar b+\bar a b \quad dx.
\]
In view of the nonlinear equation \eqref{eq:NLW:p}, the orthogonality condition   \eqref{orthcond} leads to a coupled system of ODE's for modulation curve $\la(t)$ and PDE for the remaining terms $(v, w)$. 

To obtain quantitative estimates for the modulation curve $\la(t)$ as well as the error term $(v, w)$, we collect the well studied properties for the linearized operator around the ground state. Let
\begin{equation}
	\label{L+}
	\begin{split}
		& L_+=-\De_z+(m^2-\om^2)-pf_{\om}^{p-1}(z),\\
		& L_{-}=-\De_z+(m^2-\om^2)-f_{\om}^{p-1}(z),
	\end{split}
\end{equation}
be the linear operators associated to the linearization of the equation \eqref{eq:NLW:p} around the soliton. These operators verify the following properties proven for example in \cite{ModulWeinstein}.
\begin{proposition}
\label{propL}
	For fixed $\omega$, the linear operators  $L_-$, $L_+$ are self-adjoint operators in $L^2(\mathbb{R}^3)$ with the property  
	\[
	\ker L_-=span\{f_\om\},\quad \ker L_+=span\{\nabla_{z^i}f_\om\}_{i=1}^{3}.
	\]
	Moreover $L_-$ is non-negative and the strictly negative eigenspace of $L_+$ is of dimension one.
\end{proposition}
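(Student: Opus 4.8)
Both operators have the shape $L_\pm=-\De_z+(m^2-\om^2)-c_\pm f_\om^{p-1}(z)$ with $c_-=1$ and $c_+=p$, so the first step is purely structural. By Proposition~\ref{prop:ground:f} the potential $c_\pm f_\om^{p-1}$ is bounded and decays exponentially; hence $L_\pm$ is a bounded self-adjoint perturbation of the free operator $-\De_z+(m^2-\om^2)$ and is self-adjoint on $H^2(\R^3)$, Weyl's theorem gives $\sigma_{\mathrm{ess}}(L_\pm)=[m^2-\om^2,\infty)$, and below the threshold $m^2-\om^2>0$ the spectrum consists of finitely many eigenvalues of finite multiplicity with exponentially decaying eigenfunctions.

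For $L_-$ I would simply note that the ground state equation~\eqref{eq:ground} reads $L_-f_\om=0$. Since $f_\om>0$, the Perron--Frobenius property for Schr\"odinger operators forces $0$ to be the bottom of $\sigma(L_-)$ and a simple eigenvalue, which yields at once $L_-\ge 0$ and $\ker L_-=\mathrm{span}\{f_\om\}$.

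For $L_+$, differentiating~\eqref{eq:ground} in $z^i$ (translation invariance) gives $L_+(\pa_{z^i}f_\om)=0$ for $i=1,2,3$, and the three functions are linearly independent by parity. For the reverse inclusion and the negative-eigenvalue count I would pass to the spherical-harmonic decomposition $L^2(\R^3)=\bigoplus_{\ell\ge0}L^2(\R_+,r^2\,dr)\otimes\mathcal{H}_\ell$, on which $L_+$ acts as the radial operator $L_+^{(\ell)}=-\pa_r^2-\tfrac{2}{r}\pa_r+\tfrac{\ell(\ell+1)}{r^2}+(m^2-\om^2)-pf_\om^{p-1}$. The key observation is that $f_\om'(r)$ lives in the $\ell=1$ sector (because $\pa_{z^i}f_\om=\tfrac{z^i}{r}f_\om'(r)$), solves $L_+^{(1)}f_\om'=0$, and is nodeless on $(0,\infty)$ since $f_\om$ is strictly decreasing; by Sturm--Liouville oscillation theory it is therefore the ground state of $L_+^{(1)}$, so $\inf\sigma(L_+^{(1)})=0$ and, by monotonicity of the centrifugal term, $L_+^{(\ell)}\ge0$ for $\ell\ge1$ with strict positivity for $\ell\ge2$. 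Hence $\ker L_+$ is supported on the sectors $\ell\in\{0,1\}$, equals $\mathrm{span}\{\pa_{z^i}f_\om\}$ in the $\ell=1$ part, and every negative eigenvalue of $L_+$ is radial; there is at least one such eigenvalue because $\langle L_+f_\om,f_\om\rangle=\langle L_-f_\om,f_\om\rangle-(p-1)\int_{\R^3}f_\om^{p+1}\,dz<0$. After the unitary reduction $g\mapsto rg$, the remaining assertions ``$\ker L_+^{(0)}=\{0\}$'' and ``$L_+^{(0)}$ has exactly one negative eigenvalue'' become one-dimensional Sturm--Liouville oscillation statements, to be run against the nodeless zero mode $rf_\om$ of the corresponding $L_-$ problem.

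The main obstacle is precisely this last, radial, part: the nondegeneracy $\ker L_+^{(0)}=\{0\}$ and the sharp negative-eigenvalue count are not soft consequences of abstract spectral theory but genuine ODE facts about the profile $f_\om$. For these I would not reprove anything but invoke the classical analysis of Weinstein~\cite{ModulWeinstein}, which rests on the uniqueness of the ground state~\cite{uniqueSolMcLeod}; the scaling property in Proposition~\ref{prop:ground:f} reduces everything to the normalized profile $f$ with $m^2-\om^2=1$.
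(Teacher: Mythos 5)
The paper does not actually prove Proposition~\ref{propL}; it simply records the statement and cites it to Weinstein~\cite{ModulWeinstein}, with the uniqueness input from~\cite{uniqueSolMcLeod}. Your sketch is a correct and faithful reconstruction of the argument underlying that citation: self-adjointness and $\sigma_{\mathrm{ess}}=[m^2-\om^2,\infty)$ by bounded, exponentially decaying perturbation plus Weyl; $L_-f_\om=0$ with $f_\om>0$ giving nonnegativity and a one-dimensional kernel by the Perron--Frobenius ground-state property; $L_+(\pa_{z^i}f_\om)=0$ by translation invariance, with the spherical-harmonic reduction placing the kernel entirely in the $\ell=1$ sector (via nodelessness of $f_\om'$ and monotonicity in $\ell$ of the centrifugal term) and confining any negative spectrum to $\ell=0$, where $\langle L_+f_\om,f_\om\rangle=-(p-1)\|f_\om\|_{L^{p+1}}^{p+1}<0$ produces at least one negative eigenvalue. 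You correctly flag that the radial nondegeneracy $\ker L_+^{(0)}=\{0\}$ and the exact count of one negative radial eigenvalue are the genuinely hard ODE facts, and defer them to Weinstein and McLeod --- which is precisely what the paper itself does by citing~\cite{ModulWeinstein}. No gap, and no divergence from the paper's route.
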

Since the linearized operator has negative eigenvalue, the orthogonality condition \eqref{orthcond} plays the role that the radiation term $(v, w)$ is orthogonal to the negative eigenspace. Therefore the following associated energy for the remainder term 
\begin{equation}
	\label{defofE0}
	E(v, w, \la)=\|w+\rho u\cdot \nabla_z v-i\rho \om v\|_{L^2(dz)}^2+\langle v_1,
	L_{+}v_1\rangle_{dz}+\langle v_2, L_{-}v_2\rangle_{dz} 
\end{equation}
is equivalent to $\|v\|_{H^1}+\|w\|_{L^2}$ under the orthogonality condition \eqref{orthcond}.
\begin{proposition}
	\label{positen} 
	Assume $\la\in \La $ and $v, w$ satisfy the orthogonality condition \eqref{orthcond}. Then there is a positive constant $C(\om, u)$, depending continuously on $\om$ and $u$, such that
	\[
	C^{-1}(\om, u)(\|w\|_{L^2}^2+\|v\|_{H^1}^2)\leq E(v, w, \la)\leq
	C(\om,u)(\|w\|_{L^2}^2+\|v\|_{H^1}^2).
	\]
\end{proposition}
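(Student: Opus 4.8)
The plan is to prove the two inequalities separately; the upper bound is routine and the lower bound (coercivity) carries all the content.

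For the upper bound I would simply note that, by Proposition~\ref{prop:ground:f}, $f_\om^{p-1}\in L^\infty$, so $L_\pm$ are bounded as maps $H^1(\mathbb{R}^3)\to H^{-1}(\mathbb{R}^3)$, while $\rho=(1-|u|^2)^{-1/2}$ and $|\om|<m$ stay bounded on compact subsets of $\La$; the triangle inequality applied to the first term of \eqref{defofE0} then gives $E(v,w,\la)\le C(\om,u)\big(\|v\|_{H^1}^2+\|w\|_{L^2}^2\big)$ with $C$ continuous in $(\om,u)$. For the lower bound I would first dispose of $w$: since the first term of $E$ equals $\|w+\rho u\cdot\nabla_z v-i\rho\om v\|_{L^2}^2\le E(v,w,\la)$, the triangle inequality gives $\|w\|_{L^2}\lesssim\sqrt{E}+\|v\|_{H^1}$, so the task reduces to showing $\|v\|_{H^1}^2\lesssim E(v,w,\la)$.

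To get this I would run the standard spectral/coercivity argument in the $z$-variable. Write $v=v_1+iv_2$. The two spectral inputs come from Proposition~\ref{propL}: (i) $L_-\ge0$ with kernel spanned by $f_\om$, hence a positive spectral gap on $\{f_\om\}^\perp$, which together with $L_-=-\De_z+O(1)$ yields $\langle v_2,L_-v_2\rangle_{dz}\gtrsim\|v_2\|_{H^1}^2$ once $v_2\perp f_\om$; and (ii) $L_+$ has kernel spanned by $\{\nabla_{z^i}f_\om\}_{i=1}^3$ and exactly one negative eigenvalue, so the classical coercivity lemma of Weinstein and Grillakis--Shatah--Strauss (see~\cite{ModulWeinstein},~\cite{stableShatah1},~\cite{moduStuart}) gives $\langle v_1,L_+v_1\rangle_{dz}\gtrsim\|v_1\|_{H^1}^2$ on the codimension-four subspace cut out by $v_1\perp\nabla_{z^i}f_\om$ together with one further scalar orthogonality killing the negative mode --- \emph{provided} the associated sign/convexity condition holds. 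The role of the eight orthogonality conditions \eqref{orthcond} is to place $(v,w)$ in exactly the right subspace: unpacking them with the explicit formulas \eqref{def: phiS, psiS} for $\partial_\la\phi_S$ and $\partial_\la\psi_S$ --- stripped of the phase $e^{i\Th}$ and rewritten in the $z$-variable --- one sees that \eqref{orthcond} says precisely that $(v,w)$ is symplectically orthogonal to the eight-dimensional tangent space of the soliton manifold, the $\th$- and $\om$-directions (in combination with the nonnegative first term of $E$) governing the $f_\om$ component and the negative eigenvalue of $L_+$, and the $\xi^i$- and $u^i$-directions governing $\nabla_{z^i}f_\om$. Since all of $\partial_\la\phi_S$ and $\partial_\la\psi_S$ decay exponentially (Proposition~\ref{prop:ground:f}), the orthogonality functionals --- and likewise the potential parts $\int f_\om^{p-1}|v|^2$ inside $\langle v,L_\pm v\rangle$ --- are weakly continuous on $H^1\times L^2$, while the Dirichlet parts are weakly lower semicontinuous; a standard compactness argument then upgrades positivity of $E$ on the constrained subspace to the quantitative bound $\|v\|_{H^1}^2\lesssim E(v,w,\la)$ (if it failed, a normalized sequence with $E\to0$ would have a weak limit annihilated by all the constraints and with $E\le0$, hence $0$, a contradiction). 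The constants come out continuous in $(\om,u)$ because the spectral data of $L_\pm$ depend continuously on $\om$ and $u$ enters smoothly through $\rho$ and $P_u$.

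For the sign/convexity condition required in (ii) I would check that it is exactly the defining inequality of $\La$. Differentiating \eqref{eq:ground} in $\om$ gives $L_+(\partial_\om f_\om)=2\om f_\om$, which ties the constrained positivity of $L_+$ to the convexity of the soliton action $d(\om)$ (the quantity whose second derivative governs orbital stability, as in the introduction), with $d'(\om)=-\om\|f_\om\|_{L^2}^2$; combining this with the scaling identity of Proposition~\ref{prop:ground:f}(3), which gives $\|f_\om\|_{L^2}^2=(m^2-\om^2)^{\frac{7-3p}{2(p-1)}}\|f\|_{L^2}^2$, a short computation shows $d''(\om)>0\iff\frac{\om^2}{m^2}>\frac{p-1}{6-2p}$, i.e.\ $\la\in\La$. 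The step I expect to be the main obstacle is the bookkeeping in the lower bound: the conditions \eqref{orthcond} are symplectic, not $L^2$, orthogonality, so they genuinely couple $v$ and $w$, and in the moving-soliton case relevant to Theorem~\ref{main thm} ($u\ne0$) the Lorentz-boost terms $\rho u\cdot\nabla_z$ pervade both $E$ and the conditions; extracting from the eight coupled scalar identities, together with the single positive term $\|w+\rho u\cdot\nabla_z v-i\rho\om v\|_{L^2}^2$, the clean $L^2$-orthogonalities of $v_1$ and $v_2$ that feed the two spectral lemmas --- uniformly in $(\om,u)$ --- is the delicate part.
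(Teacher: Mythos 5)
The paper does not prove this proposition; it cites Stuart~\cite{moduStuart} for it, and the argument there differs from yours in a crucial step. Your upper bound, the reduction of $\|w\|_{L^2}$ to the first term of $E$, the observation that the orthogonality functionals are weakly continuous by exponential decay (so a compactness/contradiction argument can upgrade nonnegativity to coercivity), and the convexity check relating $d''(\om)>0$ to $\om^2/m^2>\tfrac{p-1}{6-2p}$ via $L_+\partial_\om f_\om=2\om f_\om$ and the scaling of $\|f_\om\|_{L^2}^2$ (matching the sign computed in Lemma~\ref{nondegD}) are all correct inputs.

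The gap is exactly the step you flag as the ``delicate part,'' and it is not just bookkeeping. Your route goes through the classical Weinstein/Grillakis--Shatah--Strauss coercivity lemma for $L_+$, which requires genuine $L^2$-orthogonality of $v_1$ to $\{\nabla_{z^i}f_\om\}$ and to the negative mode, and of $v_2$ to $f_\om$. The conditions \eqref{orthcond} do \emph{not} produce these. Even at $u=0$, unpacking \eqref{orthcond} gives relations of the form $\langle f_\om,w_2\rangle_{dz}=-\om\langle f_\om,v_1\rangle_{dz}$, $\langle\partial_\om f_\om,w_1\rangle_{dz}=\langle f_\om,v_2\rangle_{dz}$, $\langle\partial_{z^k}f_\om,w_1\rangle_{dz}=\om\langle\partial_{z^k}f_\om,v_2\rangle_{dz}$: each trades a moment of $v$ against one of $w$, and none forces $\langle f_\om,v_1\rangle$ or $\langle f_\om,v_2\rangle$ to vanish. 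So you cannot restrict $L_+$ to a codimension-four subspace and quote the standard lemma; the negative mode is controlled only \emph{jointly} with the positive first term $\|w+\rho u\cdot\nabla_z v-i\rho\om v\|_{L^2(dz)}^2$. The mechanism that does this is not the subspace-coercivity lemma but the penalised bound $\langle\zeta,L_+\zeta\rangle_{dz}+\alpha\langle f_\om,\zeta\rangle_{dz}^2\geq 0$ for \emph{all} $\zeta\in H^1$, obtained from the second-variation inequality for the Gagliardo--Nirenberg quotient minimised by $f_\om$ (completing the square gives an explicit $\alpha$). Then only the $\theta$-component of \eqref{orthcond} is used: combined with $L_-(u\cdot z\,f_\om)=-2\,u\cdot\nabla_z f_\om$ and Cauchy--Schwarz in $L_-^{1/2}$ it yields
\[
4\rho^2\om^2\,\langle f_\om,v_1\rangle_{dz}^2\le \rho^2\|f_\om\|_{L^2}^2\bigl(\|w+\rho u\cdot\nabla_z v-i\rho\om v\|_{L^2(dz)}^2+\langle v_2,L_-v_2\rangle_{dz}\bigr),
\]
and the coefficient condition for absorbing the penalty $\alpha\langle f_\om,v_1\rangle^2$ reduces, via the energy identities of Proposition~\ref{prop:ground:f}, to precisely $\om^2/m^2\ge\tfrac{p-1}{6-2p}$, i.e.\ $\la\in\La$. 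To close your outline you would need to replace the codimension-four coercivity lemma by this penalised inequality (or carry out a bare-hands spectral decomposition that explicitly tracks the first term of $E$ alongside \eqref{orthcond}); as written, the classical lemma does not apply.
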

Our subsequent analysis relies on this proposition. The proof could be found for example in \cite{moduStuart}, based on the properties for the linear operator. Convexity of the conserved energies then leads to the  control for the radiation terms, which then leads to the  bound for the modulation curve. 

To avoid too many constants, throughout this paper, we use the notation  $B\les C$ to stands for $B\leq D C$ for some constant $D$ depending only on $\lambda_0\in \Lambda$, $p$, $m$ and the given time $T>0$. We emphasize here that the implicit constant does not rely on the small parameter $\epsilon$ and $\delta$.   

\section{Stability of stable solitons on a fixed electromagnetic field}\label{stability soli fixed EM}
Let $A^{\ep}, \tilde{A}^{\ep}, A^{\ep}_{b}$ be given as in \eqref{connection def}, \eqref{connection diff}, \eqref{connection background} respectively and all satisfy the Lorentz gauge condition \eqref{Lorentz gauge}. Consider the Cauchy problem to the nonlinear wave equation
\begin{equation}
	\label{eq:CSF:fix}
	\begin{cases}
		\Box_{A^\ep} \phi-m^2 \phi+|\phi|^{p-1}\phi=0,\\
		\phi(0, x)=\phi_0(x),\quad \pa_t\phi(0, x)=\phi_1(x).
	\end{cases}
\end{equation}
We assume that connection field  $\tilde{A}^\ep$ verifies the following bound
\begin{equation}
	\label{eq:cond4:A}
	\|\pa\pa^s\tilde{A}^\ep(t,x) \|_{L_x^2} \leq 10^3\ep^2,\quad s\leq 2,\quad \forall t\leq T/\ep
\end{equation}
and initially $\|\tilde{A}^\ep(0, x)\|_{L_x^\infty}$ is small, that is, 
\begin{equation}
\label{eq:A0:id}
\|\tilde{A}^\ep(0, x)\|_{L_x^\infty}\les \ep. 
\end{equation}
This assumption can be realized under the Lorentz gauge condition and the gauge invariant assumptions on the initial data, which will be discussed later. More importantly, it allows the existence of nonzero charge of the particle. Therefore the above assumption indicates that 
\begin{align*}
	|\tilde{A}^\ep(t, x) | \les |\tilde{A}^\ep(0, x)| +\int_0^t |\pa_t\tilde{A}^\ep(t', x)| dt' \les \ep+t \ep^2 \les \ep,\quad \forall t\leq T/\ep. 
\end{align*}
On the other hand, for fixed time $t$, using Hardy's inequality, we obtain that 
\[
\|\tilde{A}^\ep(t, x)/|x-\xi(t)|\|_{L_x^2}\les \|\nabla \tilde{A}^\ep(t, x)\|_{L^2}\les \ep^2, 
\]
which implies that 
\[
|\tilde{A}^\ep(t, x)|\les \ep^2,\quad \forall  |x-\xi(t)|\leq 1. 
\] 
Here $\xi(t)$ is the center of the soliton. Now combining with the standard Sobolev embedding  
\[
|\tilde{A}^\ep(t, x)-\tilde{A}^\ep(t, y)|\les \ep^2 |x-y|^{\frac{1}{2}},
\]
we can demonstrate that 
\begin{equation}
	\label{eq:tildeA:bd}
	|\tilde{A}^\ep(t, x)| \les  \ep \min\{  1,   \ep (1+|x-\xi(t)|)^{\frac{1}{2}}\},\quad \forall x\in \mathbb{R}^3,\quad t\leq T/\ep. 
\end{equation}
The  main purpose of this section is to prove the orbital stability of the soliton on a small constant electromagnetic field up to time $T/\ep$. 
\begin{proposition}
	\label{prop:fix}
	Let $A^{\ep}$,  $\tilde{A}^{\ep}$,   $A^{\ep}_{b}$ be  connection fields verifying the Lorentz gauge condition and the assumptions \eqref{connection diff}, \eqref{connection background}, \eqref{eq:cond4:A}, \eqref{eq:A0:id}. Assume that $2\leq p < \frac{7}{3}$. Then for all $\la_0=(\om_0, \th_0, 0,
	u_0)\in\La $ such that $u_0$ is parallel to the direction of the magnetic field  $B_b$, there exists a positive number $\ep^*$ depending on  $\la_0$ such that for all positive $\ep <\ep^*$, if the initial data $\phi_0$, $\phi_1$ are close to some stable soliton in the sense that 
	\begin{equation}
		\label{eq:ID:sca:w0}
		\int_{\mathbb{R}^3} (1+|x|^2) (|\phi_0(x)-\phi_S(x;\la_0) |^2 +|\nabla(\phi_0(x)-\phi_S(x;\la_0)) |^2+ |\phi_1(x)-\psi_S(x;\la_0)|^2 )dx \leq \ep^2,
	\end{equation}
	then there exists a unique solution $\phi(t, x)$ defined on $  [0, T/\ep]\times  \mathbb{R}^3$ to the equation \eqref{eq:CSF:fix}  with the
	following property: there is a $C^1$ curve $\la(t)=(\om(t), \th(t),
	\eta(t)+u_0t, u(t)+u_0)\in\La $ such that
	\begin{equation*}
		|\la(0)-\la_0|\leq C\ep,\quad
		|\dot \ga|=|\dot \la-V(\la)|\leq C\ep^2,\quad \forall
		t\in[0,T/\ep]
	\end{equation*}
	and the solution $\phi$ is close to the translated solitons
	\begin{equation*}
		\begin{split}
			\|\phi(t, x)-\phi_S(x;\la(t))\|_{H^1(\mathbb{R}^3)}+\|\pa_t\phi(t, x)-\psi_S(x;\la(t))\|_{L^2(\mathbb{R}^3)}
			\leq C\ep,\quad \forall t\in [0, T/\ep].
		\end{split}
	\end{equation*}
	Here  the constant $C$ depends only on $m$, $p$, $T$ and $\la_0$.
\end{proposition}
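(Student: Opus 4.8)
I will run a continuity (bootstrap) argument on an interval $[0,T_*]\subseteq[0,T/\ep]$ built on the modulation method, the coercivity Proposition~\ref{positen}, and the algebraic structure forced by $u_0\parallel B_b$. Local well‑posedness of \eqref{eq:CSF:fix} holds on finite time intervals (the background connection $A^\ep_b$ is a smooth polynomial, so the standard energy method applies), giving a solution on a maximal interval; since $(\phi_0,\phi_1)$ is $O(\ep)$‑close to $(\phi_S(\cdot;\la_0),\psi_S(\cdot;\la_0))$ by \eqref{eq:ID:sca:w0}, the orthogonality condition \eqref{orthcond} can be solved for a $C^1$ curve $\la(t)=(\om(t),\th(t),\xi(t),u(t)+u_0)$, $\xi(t)=\eta(t)+u_0t$, via the implicit function theorem for as long as the solution stays $O(\ep)$‑close to the soliton family, yielding the decomposition \eqref{eq:decomp} with $(v,w)$ satisfying \eqref{orthcond}; set $\dot\ga=\dot\la-V(\la)$. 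On $[0,T_*]$ I bootstrap
\begin{equation*}
\|v(t)\|_{H^1}+\|w(t)\|_{L^2}\le C_1\ep,\qquad |\dot\ga(t)|\le C_1\ep^2,\qquad \int_{\mathbb{R}^3}(1+|x-\xi(t)|^2)\big(|v|^2+|\nabla v|^2+|w|^2\big)\,dx\le C_1\ep^2
\end{equation*}
for a large constant $C_1$; integrating the second bound keeps $\la(t)$ in a fixed compact subset of $\La$ (here I use $u_0\parallel B_b$: only the $x_3$‑component of the center drifts, while the remaining parameters move by $O(\ep)$ over $[0,T/\ep]$).

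\textbf{Modulation equations and the corrector energy.} Inserting \eqref{eq:decomp} into \eqref{eq:CSF:fix} and using the soliton identity \eqref{idenofphiS}, differentiating \eqref{orthcond} in $t$ yields a linear system for $\dot\ga$ whose right‑hand side is quadratic in $(v,w)$ plus terms linear in $A^\ep=A^\ep_b+\tilde A^\ep$ paired against the exponentially localized $\phi_S$ and $\partial_\la\phi_S$. Near the soliton $|A^\ep_b(x)|\lesssim\ep^2(1+|x-\xi(t)|)$ because $\xi(t)$ stays within $O(\ep)$ of the $x_3$‑axis, on which $A^\ep_b$ vanishes, while $|\tilde A^\ep|\lesssim\ep^2(1+|x-\xi(t)|)^{1/2}$ near the soliton by \eqref{eq:tildeA:bd}; together with the exponential decay of $\phi_S$ this gives $|\dot\ga|\lesssim\|v\|_{H^1}^2+\|w\|_{L^2}^2+\ep^2\lesssim\ep^2$, improving the second bootstrap bound. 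For $(v,w)$ I decompose the energy and charge of \eqref{eq:CSF:fix} (no longer conserved, due to $A^\ep$) around $\phi_S$, subtract the multiples dictated by the modulation parameters, and obtain a functional $\mathcal{E}(t)$ that agrees with $E(v,w,\la)$ of \eqref{defofE0} up to cubic‑and‑higher terms and connection corrections; hence, under \eqref{orthcond} and Proposition~\ref{positen}, $\mathcal{E}(t)\approx\|v(t)\|_{H^1}^2+\|w(t)\|_{L^2}^2$ in the bootstrap regime.

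\textbf{Energy identity and the parallel cancellation.} Differentiating $\mathcal{E}$ in $t$, the Minkowski part is cubic in $(v,w)$, hence $\lesssim(\|v\|_{H^1}+\|w\|_{L^2})^3$; every new term carries a factor of $A^\ep$. The borderline contribution, after integration in $t$, is of the schematic form $\int_0^t\!\int_{\mathbb{R}^3}\langle i\,(A^\ep)^\mu\partial_\mu\phi_S,\,\phi_S\rangle\,dx\,ds$ together with its $(v,w)$‑linear analogues (cf.\ Proposition~\ref{propcon}); bounding it pointwise and integrating to $t\le T/\ep$ would only give $O(1)$. The gain comes from the timelike field $X=\partial_t+u_0^k\partial_k$: because the particle travels in the direction $u_0\parallel B_b$ one has $Xz(x;\la)=O(\ep^2)(1+|x-\xi|)$, $X(A^\ep_b)_\mu=0$ (as $A^\ep_b$ depends only on $x_1,x_2$ and $u_0$ has no transverse component), and $\|X\tilde A^\ep\|_{L^\infty}\lesssim\ep^2$ from \eqref{eq:cond4:A}, so the relevant phase‑invariant current obeys $XJ[\phi_S]=O(\ep^2)$ on the soliton region. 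Writing $\partial_t=X-u_0^k\partial_k$ inside the time integral and integrating by parts in $X$ moves the derivative onto $A^\ep_b$ (annihilated), onto $\phi_S$ (producing the $O(\ep^2)$ factor), or onto $\tilde A^\ep$ (producing $\|X\tilde A^\ep\|_{L^\infty}\lesssim\ep^2$), leaving boundary terms of size $O(\ep^2)$; so this contribution is $O(\ep^2)$ \emph{uniformly} for $t\le T/\ep$. The error terms coming from the growth of $A^\ep_b$ at spatial infinity, typified by $\int|A^\ep_b|^2|v|^2\,dx$, are split into $\{|x-\xi(t)|\le R\}$ — where $|A^\ep_b|\lesssim\ep^2$ and the weighted bootstrap bound applies — and the far region, where finite speed of propagation and the exterior weighted energy estimate $\int_{r\ge t+R_0}(1+|x|^2)(|D\phi|^2+m^2|\phi|^2)\,dx\lesssim1$ (derived, as in Section~\ref{sec:weightE}, by the $r$‑weighted energy method for the massive covariant Klein--Gordon equation with the small exterior data \eqref{eq:ID:sca:w0}) control $\phi$, and with it $v$, which is exponentially small there. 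Collecting everything yields $\mathcal{E}(t)\lesssim\ep^2+\int_0^t\big(\ep\,\mathcal{E}(s)+\ep^3\big)\,ds$, and Grönwall over the interval $[0,T/\ep]$ gives $\mathcal{E}(t)\lesssim\ep^2$.

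\textbf{Closing and the main obstacle.} The bound $\mathcal{E}\lesssim\ep^2$ improves the first bootstrap inequality; running the same energy identity with the weight $(1+|x-\xi(t)|^2)$, combined with the exterior estimate and the weighted data bound \eqref{eq:ID:sca:w0}, improves the third; and the modulation estimate improves the second. For $C_1$ large and $\ep<\ep^*(\la_0,m,p,T)$ small, all improved constants beat $C_1$, so a standard continuity argument gives $T_*=T/\ep$; uniqueness descends from the local theory, and the claimed $H^1\times L^2$ closeness of $(\phi,\partial_t\phi)$ to $(\phi_S(\cdot;\la(t)),\psi_S(\cdot;\la(t)))$ follows from $\|v\|_{H^1}+\|w\|_{L^2}\lesssim\ep$ and $|e^{i\Theta}|=1$. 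I expect the main obstacle to be precisely the borderline connection error term: a priori it grows like $t$ and would break stability already at $t\sim1$, and only the algebraic cancellation forced by the particle moving parallel to $B_b$ (namely $XJ[\phi_S]=O(\ep^2)$ and $XA^\ep_b=0$), exploited through the integration by parts in the timelike field $X$, reduces it to $O(\ep^2)$ over the whole interval $[0,T/\ep]$; the secondary obstacle is the unbounded growth of $A^\ep_b$ at spatial infinity, for which the exterior weighted energy estimate is essential.
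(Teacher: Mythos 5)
Your overall architecture — local theory, modulation decomposition with the orthogonality condition, coercivity via Proposition~\ref{positen}, decomposing the almost-conserved energies/charge around the soliton, and the key $X=\pa_t+u_0^k\pa_k$ integration-by-parts to exploit $u_0\parallel B_b$, supplemented by the exterior weighted energy estimate from Section~\ref{sec:weightE} — matches the paper's proof of Proposition~\ref{prop:fix}. However, your third bootstrap bound is not achievable as stated and signals a gap in how you treat the growing background connection. You claim to propagate
\[
\int_{\mathbb{R}^3}(1+|x-\xi(t)|^2)\big(|v|^2+|\nabla v|^2+|w|^2\big)\,dx\le C_1\ep^2,
\]
but outside the forward light cone $\{|x|\geq t+R_0\}$ the soliton $\phi_S$ is exponentially small, so there $v\approx e^{-i\Theta}\phi$, and the exterior estimate \eqref{eq:weighted:phi:ex} only gives $\int_{\Sigma_{t,R_0}}(1+|x|^2)(|D\phi|^2+m^2|\phi|^2)\,dx\lesssim 1$, which is $O(1)$, not $O(\ep^2)$. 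Likewise, your parenthetical that $v$ is ``exponentially small there'' is false — only $\phi_S$ has that decay. The paper deliberately avoids any weighted bootstrap on $(v,w)$: it bootstraps only $\la\in\La_{\delta_0}$, $|\eta|\leq 2C_2$, and the unweighted $|u|+\|v\|_{H^1}+\|w\|_{L^2}\leq 2\delta_1$, and then controls the $|A^\ep_b|^2|\phi|^2$-type errors by splitting into $\{|x|\leq t+R_0\}$ (where the brute bound $|x_1|+|x_2|\lesssim t+R_0$ and $\ep^4(T/\ep)^2\lesssim\ep^2$ suffice) and $\Sigma_{t,R_0}$ (where the $O(1)$ exterior weighted estimate on $\phi$ combines with the $\ep^4$ prefactor). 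Your weighted-$v$ bootstrap, with the $\ep^2$ right-hand side, would be needed to make your interior splitting close, but it cannot close.

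A smaller quantitative slip: you write $XJ[\phi_S]=O(\ep^2)$, reasoning from $Xz=O(\ep^2)(1+|x-\xi|)$ and $XA^\ep_b=0$. In fact $Xz$ has an $O(\ep)$ contribution from $|u|$ (which is $\lesssim\ep$, not zero), so $|X\phi_S|\lesssim\ep$ and the correct gain is $|XJ[\phi_S]|\lesssim\ep$ (as stated in the paper's introduction), with the total $O(\ep^2)$ coming from combining this $\ep$ with the $\ep^2$ size of $\pa_\nu A^\ep$ and the $T/\ep$ time integration. The final conclusion $O(\ep^2)$ is still reached, so this does not break the argument, but the bookkeeping needs to be corrected before the bootstrap can be made rigorous.
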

The above orbital stability result will be shown under the above assumption \eqref{eq:cond4:A} for the connection field $\tilde{A}^\ep$, which verifies the Maxwell equation \eqref{eq:MKG:scaled}. The assumption for $\tilde{A}^\ep$ can be viewed as bootstrap assumptions. To close this bootstrap assumption, we will do energy estimates for $\tilde{A}^\ep$, which requires higher order energy estimates for the charge and current density $J[\phi]$, that is higher order energy estimates for the scalar field $\phi$. 
\begin{proposition}
	\label{pro:phi:HSob}
	Assume that the connection field $ {A}^{\ep}$ satisfies the estimate \eqref{eq:cond4:A} and \eqref{eq:A0:id}.  In addition to the assumption \eqref{eq:ID:sca:w0} on the initial data $(\phi_0, \phi_1)$, assume that the higher covariant energy is also close to the soliton with parameter $\la_0$
	\begin{equation*}
		\sum\limits_{k\leq 2}\int_{\mathbb{R}^3} |D_j^{k+1}(\phi_0(x)-\phi_S(x;\la_0))|^2 +|D_j^k(\phi_1(x)-\psi_S(x;\la_0))|^2 dx \leq \ep^2,\quad j=1, 2, 3. 
	\end{equation*}
	Let $\tilde{\la}(t)$ be the integral curve of $V(\la)$ with $\tilde{\la}(0)=\la(0)$ and the modulation curve $\la(t)$ obtained in  the previous Proposition \ref{prop:fix}. Then
	\[
	\sum\limits_{|k|\leq
		3} \int_{\mathbb{R}^3}|D^k(\phi(t, x)-\phi_S(x;\tilde{\la}(t))|^2 dx \leq
	C \ep^2,\quad \forall t\in[0, T/\ep]
	\]
	for some  constant $C$ depending only on $m$, $T$, $p$ and $\la_0$. Here the covariant derivative $D$ is with respect to the given connection field $A^\ep$, that is, $D=\pa+i A^\ep$.
\end{proposition}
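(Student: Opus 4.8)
The plan is to propagate the covariant $H^{3}$ bound by differentiating the covariant equation and running energy estimates one order at a time, with the multiplier adapted to the soliton's motion. I would work with $v:=\phi-\phi_{S}(\,\cdot\,;\tilde{\la}(t))$; using the free integral curve $\tilde\la$ rather than the modulation curve $\la$ is convenient because $\tilde\la$ is smooth and explicit, so $\phi_{S}(\,\cdot\,;\tilde\la)$ solves the free equation \eqref{eq:NLW:p} exactly and one never differentiates the merely $C^{1}$ curve $\la$ in time; in the parallel case one has $|\la(t)-\tilde\la(t)|\les\ep$ throughout $[0,T/\ep]$ — the velocity barely changes since the soliton moves along $B_{b}$ — so \prop{prop:fix} already supplies $\|v\|_{H^{1}}+\|\pa_{t}v\|_{L^{2}}\les\ep$. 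Under the Lorentz gauge,
\[
\Box_{A^{\ep}}v=m^{2}v-\big(|\phi|^{p-1}\phi-|\phi_{S}|^{p-1}\phi_{S}\big)-2i(A^{\ep})^{\mu}D_{\mu}\phi_{S}-|A^{\ep}|^{2}\phi_{S}.
\]

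First I would dispose of the easy source terms. The forcing $G:=-2i(A^{\ep})^{\mu}D_{\mu}\phi_{S}-|A^{\ep}|^{2}\phi_{S}$, and all its covariant derivatives of order $\le2$, has $L^{2}$-norm $\les\ep^{2}$: the linearly growing $A_{b}^{\ep}$ is beaten by the exponential decay of $f_{\om}$ (\prop{prop:ground:f}), $\tilde A^{\ep}$ is controlled by \eqref{eq:tildeA:bd}, \eqref{eq:cond4:A} and Sobolev embedding, and $\pa^{2}A_{b}^{\ep}=0$ leaves no uncontrolled weight after differentiation. Likewise, commuting a covariant string $D^{\alpha}$, $|\alpha|\le3$, through $\Box_{A^{\ep}}$ only contracts $D^{\beta}F^{\ep}$ ($|\beta|\le2$) against $D^{\gamma}v$ with $|\gamma|<|\alpha|$, and $F^{\ep}=\ep^{2}F_{b}+d\tilde A^{\ep}$ obeys $\|F^{\ep}\|_{L^{\infty}}+\|\pa F^{\ep}\|_{L^{2}\cap L^{6}}+\|\pa^{2}F^{\ep}\|_{L^{2}}\les\ep^{2}$ by \eqref{eq:cond4:A} and Sobolev, so these commutator forcings have $L^{2}$-size $\les\ep^{2}+\ep^{2}\sum_{|\gamma|<|\alpha|}\|D^{\gamma}v\|_{L^{2}}$.

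The heart of the argument is running the energy estimate for $D^{\alpha}v$ ($|\alpha|\le3$) so that no factor $e^{cT/\ep}$ appears over $[0,T/\ep]$. I would use the multiplier $X^{\mu}D_{\mu}(D^{\alpha}v)$ with the timelike field $X=\pa_{t}+u_{0}^{k}\pa_{k}$, which for $|u_{0}|<1$ yields a coercive energy $\gtrsim\|D\,D^{\alpha}v\|_{L^{2}}^{2}+\|D^{\alpha}v\|_{L^{2}}^{2}$; this amounts to working in the co-moving frame, where the linearization of the power nonlinearity is governed by the \emph{time-independent} operators $L_{\pm}$ of \eqref{L+}, so one closes on coercive almost-conserved functionals of the type $E(v,w,\la)$ from \eqref{defofE0} and \prop{positen} (built with the modulation curve of \prop{prop:fix} and the orthogonality \eqref{orthcond}), tracking $\mathcal{E}_{3}(t):=\sum_{|\alpha|\le3}(\text{such functional for }D^{\alpha}v)$. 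The parallel hypothesis $u_{0}\parallel B_{b}$ enters decisively here: one checks from \eqref{connection background} that $X^{\mu}(F_{b}^{\ep})_{\mu\nu}=0$ and $X^{\mu}(A_{b}^{\ep})_{\mu}=0$, so $X$ is aligned with both the soliton's worldline and the background magnetic connection. Consequently each error term in $\tfrac{d}{dt}\mathcal{E}_{3}$ coming from the connection or from the motion of the soliton is, after one integration by parts in the $X$-identity, the time derivative of a quantity of size $\les\ep^{2}$ plus a genuine remainder of rate $\les\ep^{3}$ — the remainder always carries a factor $X^{\mu}(F_{b}^{\ep})_{\mu\nu}$, $X^{\mu}(A_{b}^{\ep})_{\mu}$, or a derivative of $\phi_{S}$ along $X$, each of which is $0$ or $\les\ep$. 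The genuinely nonlinear-in-$v$ pieces of $D^{\alpha}\big(|\phi|^{p-1}\phi-|\phi_{S}|^{p-1}\phi_{S}\big)$ are, by the algebra of $|s|^{p-1}s$ with $p\ge2$ (so the coefficients $|\phi|^{p-2}$, $|\phi|^{p-2}e^{i\arg\phi}$ etc.\ stay bounded) together with Sobolev, bounded in $L^{2}$ by $\|v\|_{L^{\infty}}$ times lower-order covariant Sobolev norms of $v$, hence $\les\ep^{2}$ under a bootstrap assumption $\mathcal{E}_{3}\le(K\ep)^{2}$.

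Integrating $\tfrac{d}{dt}\mathcal{E}_{3}=\tfrac{d}{dt}(\,O(\ep^{2})\,)+O(\ep^{3})$ over $[0,T/\ep]$, with $\mathcal{E}_{3}(0)\les\ep^{2}$ — which follows from the hypotheses on $D_{j}^{\le3}(\phi_{0}-\phi_{S}(\,\cdot\,;\la_{0}))$ and $D_{j}^{\le2}(\phi_{1}-\psi_{S}(\,\cdot\,;\la_{0}))$ after trading $D_{t}$-derivatives for spatial covariant ones via the equation, together with $|\la(0)-\la_{0}|\les\ep$ — yields $\mathcal{E}_{3}(t)\les\ep^{2}$ for all $t\in[0,T/\ep]$, which improves the bootstrap constant and closes the argument; coercivity then gives $\sum_{|\alpha|\le3}\|D^{\alpha}(\phi-\phi_{S}(\,\cdot\,;\tilde\la))\|_{L^{2}}^{2}\les\ep^{2}$. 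The main obstacle — and the only place the parallel assumption is indispensable — is exactly this long-time bookkeeping: without the alignment $X^{\mu}(F_{b}^{\ep})_{\mu\nu}=0$, $X^{\mu}(A_{b}^{\ep})_{\mu}=0$ and the gain $|X\phi_{S}|\les\ep$ that it produces, the drift of $\mathcal{E}_{3}$ over $[0,T/\ep]$ would be $O(1)$ rather than $O(\ep^{2})$ and the estimate would be lost. Secondary technical points are the absorption of the linear growth of $A_{b}^{\ep}$ by the exponential decay of $f_{\om}$ in the forcing terms, and the restriction $p\ge2$, which is precisely what is needed to differentiate the power nonlinearity three times with $L^{2}$-controllable pieces.
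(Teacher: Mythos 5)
Your proposal identifies the right ingredients — passage to the modified curve $\tilde\la$, the timelike multiplier $X=\pa_t+u_0^k\pa_k$, the crucial gain $|X\phi_S|\les\ep$ together with $X^\mu(A^\ep_b)_\mu=0$, and the restriction $p\ge2$ — but it has three genuine gaps in the execution.

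First, the claim that $\phi_S(\cdot;\tilde\la(t))$ solves \eqref{eq:NLW:p} exactly and that ``one never differentiates the merely $C^1$ curve $\la$ in time'' is false. The curve $\tilde\la$ is defined by $\pa_t\tilde\la = V(\la(t))$, with $V$ evaluated on the modulation curve $\la$, not on $\tilde\la$ itself; this is forced because replacing $V(\la(t))$ by $V(\tilde\la(t))$ would let $\th_\la-\th_{\tilde\la}$ and $\xi_\la-\xi_{\tilde\la}$ drift by $O(1)$ over $[0,T/\ep]$. Consequently $\phi_S(\tilde\la)$ is only an $O(\ep)$-approximate soliton, the linearized equation for $\tilde v$ acquires the forcing $\tilde K$ that involves $\pa_t V(\la)$, and its $D_X$- and $D_X^2$-derivatives involve $\ddot\gamma$ and $\pa_t^3\gamma$. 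The paper has to establish $|\ddot\gamma|\les\ep^2$ and $|\pa_t^3\gamma|\les\ep^2(1+\|v\|_{L^\infty})+\ep\|\pa_\la\phi_S X^2 v\|_{L^2}$ (\prop{prop:higher:mod}) and then close the loop with the intermediate second-order energy. This whole layer of the argument is absent from your proposal.

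Second, invoking ``coercive almost-conserved functionals of the type $E(v,w,\la)$ from \eqref{defofE0} and \prop{positen}'' at higher order is not viable: the orthogonality condition \eqref{orthcond} holds only for $(v,w)$ themselves, so $\langle D^\alpha v_1, L_+D^\alpha v_1\rangle+\langle D^\alpha v_2, L_-D^\alpha v_2\rangle$ is not coercive for $|\alpha|\ge1$, since $D^\alpha v$ has no reason to avoid the negative direction of $L_+$ or the kernels. The paper's estimate \eqref{eq:ee:Lep} gets coercivity purely from $X$ being timelike together with $m^2>\om_0^2$, with the soliton-potential terms $f_{\om_0}^{p-1}v$ treated as source via an $X$-integration by parts whose boundary is $O(\|v\|_{L^2}^2)$ and whose bulk carries the small factor $X(f_{\om_0}^{p-1})$. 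You in fact mention this timelike coercivity in the same sentence, but then pivot to the $L_\pm$ functional; only the former route is available at higher order.

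Third, and most seriously, running the $X$-energy directly for $D^\alpha v$ with spatial multi-indices $\alpha$ does not close over $[0,T/\ep]$. When you commute with a spatial derivative $D_j$, the commutator $[D_j,f_{\om_0}^{p-1}]=\pa_j(f_{\om_0}^{p-1})$ is $O(1)$ (not $O(\ep)$); after the $X$-integration by parts the source $\pa_j(f_{\om_0}^{p-1})v$ leaves a bulk of the schematic form $\int_0^t\int\pa_j(f_{\om_0}^{p-1})\langle D_Xv, D_j v\rangle\,dx\,ds\sim\int_0^t\ep\cdot\ep\,ds\sim T\ep$, which is worse than $\ep^2$. The paper circumvents this by commuting the equation only with $D_X$ (so every commutator of the soliton-coefficient carries an $X$-derivative of $f_{\om_0}$, hence an $\ep$), obtaining $\|DD_X\tilde v\|_{L^2}\les\ep$ and $\|DD_XD_X\tilde v\|_{L^2}\les\ep$, and then \emph{recovering all spatial derivatives via the elliptic estimate} for the spatial part of $\Box_{A^\ep}$ (rewriting $D_1D_1+D_2D_2+(1-|u_0|^2)D_3D_3$ in terms of $D_XD_X$, $D_kD_X$, and $F^\ep$). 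This elliptic step is not mentioned in your proposal, and without it the scheme you describe loses a factor of $\ep$ on the way to time $T/\ep$.
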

 
%%%%%%%%%%%%%%%%
\subsection{Orthogonality condition and modulation equations}
Since the initial data are close to some soliton and  solution to \eqref{eq:CSF:fix} exists locally, we  decompose the solution as in \eqref{eq:decomp} for some curve $\la(t)\in \La$, which we write as
\[
\la(t)=(\om(t), \th(t), \eta(t)+u_0t, u(t)+u_0).
\]
Here we keep in mind that $\xi(t)=\eta(t)+u_0t$ and $u_0$ is parallel to $(0, 0, 1)$, that is the direction of the constant magnetic field.  We choose $\la(t)$ such that the orthogonality  \eqref{orthcond} holds. Differentiate the equation \eqref{orthcond}  with respect to the time variable $t$. We conclude  that the orthogonality condition  \eqref{orthcond} holds if it holds initially and the curve $\la(t)$ satisfies
\begin{equation*}
	\begin{split}
		&\langle \pa_\la^2\phi_S\cdot\dot{\la}, \e^{i\Th}w\rangle_{dx}+\langle \pa_\la\phi_S, \pa_t\left(\e^{i\Th}w\right)\rangle_{dx}=
		\langle \pa_\la^2\psi_S\cdot\dot{\la},  \e^{i\Th}v\rangle_{dx}+\langle \pa_\la\psi_S, \pa_t\left(\e^{i\Th}v\right)\rangle_{dx}.
	\end{split}
\end{equation*}
In view of the the decomposition \eqref{eq:decomp} and the relation $\dot{\la}=\dot{\ga}+V(\la)$, we can show that the above equation is equivalent to
\begin{align}
	\notag
	&\left(\langle \pa_\la \psi_S,  \pa_\la \phi_S\rangle_{dx}-\langle \pa_\la\phi_S,  \pa_\la\psi_S\rangle_{dx}+
	\langle \pa_\la^2\phi_S, \e^{i\Th}w\rangle_{dx}-\langle \pa_\la^2\psi_S, \e^{i\Th}v\rangle_{dx}\right)\dot{\ga}\\
	\label{modeq0}
	&=\langle \pa_\la(V(\la)\pa_\la\psi_S), \e^{i\Th}v\rangle_{dx}-\langle \pa_\la\phi_S,  \phi_{tt}-\pa_\la\psi_S V(\la)\rangle_{dx},
\end{align}
in which  we have replaced $\langle \pa_\la V(\la)\cdot \pa_\la \phi_S, \e^{i\Th}w\rangle$ with $\langle \pa_\la V(\la)\cdot \pa_\la \psi_S, \e^{i\Th}v\rangle$ in view of the orthogonality condition \eqref{orthcond} as well as the relation $\psi_S=\pa_\la \phi_S\cdot V(\la)$. 

Now under the Lorentz gauge condition, we can write that 
\begin{equation}
	\label{defofH}
	\begin{split}
		\Box_{A^\ep}\phi+\pa_{tt}\phi-\Delta \phi= 2i (A^\ep)^\mu \pa_\mu \phi +i \pa^\mu A^\ep_\mu \phi -(A^\ep)^\mu A_\mu^\ep \phi=b^\mu \pa_\mu \phi +c\phi, 
	\end{split}
\end{equation}
in which we denote that 
\[
b^\mu =2i (A^\ep)^\mu,\quad c=-(A^\ep)^\mu (A^\ep)_{\mu}.
\]
Recall the identity \eqref{idenofphiS} for the soliton $\phi_S$  and observe that
$$Re(\e^{-i\Th}\pa_\la\phi_S)=\pa_\la|\phi_S|.$$
Integration by parts then implies that  
\begin{align*}
	\langle \pa_\la\phi_S, \Delta_x \phi\rangle_{dx}&=\langle \pa_\la\phi_S, \Delta_x\phi_S\rangle_{dx}+\langle\Delta_x \pa_\la\phi_S, \phi-\phi_S\rangle_{dx}\\
	&=\langle \pa_\la\phi_S, m^2\phi_S-|\phi_S|^{p-1}\phi_S+\pa_\la\psi_S\cdot V(\la)\rangle_{dx}+\langle \pa_\la\Delta_x \phi_S, \phi-\phi_S\rangle_{dx}\\
	&=\langle \pa_\la(\pa_\la\psi_S\cdot V(\la)), \phi-\phi_S\rangle_{dx}+\langle \pa_\la\phi_S, \pa_\la\psi_S\cdot V(\la)\rangle_{dx}\\
	&\quad -\langle \pa_\la\phi_S, -m^2\phi+|\phi_S|^{p-1}\phi+(p-1)|\phi_S|^{p-2}\phi_S Re(\e^{-i\Th}(\phi-\phi_S))\rangle_{dx}.
\end{align*}
Now use the equation \eqref{defofH} to
replace $\pa_{tt}\phi$ in \eqref{modeq0}. The equation
\eqref{eq:CSF:fix} of $\phi$ then implies that the right hand side of \eqref{modeq0} can be written as
\begin{align}
	\label{eq:def4F}
	K(t;\la(t))=
	& -\langle b^\mu \pa_\la \phi_S, \pa_\mu\phi\rangle_{dx}-\langle \pa_\la\phi_S, c\phi\rangle_{dx}-\langle \pa_\la \phi_S, \e^{i\Th}\mathcal{N}(\la)\rangle_{dx}.
\end{align}
Here the nonlinearity is given by    
\begin{equation}
	\label{eq:def4:N}
	\mathcal{N}(\la)=\e^{-i\Th}\left(|\phi|^{p-1}\phi-|\phi_S|^{p-1}\phi_S\right)-f_\om ^{p-1}v-
	(p-1)f_\om^{p-1} Re(v),
\end{equation}
in which   $Re(v)$ is the real part of the complex valued function $v$ in view of  the decomposition \eqref{eq:decomp}.

To further simplify the above modulation equations, let
\begin{equation}
	\label{defofM}
	\begin{split}
		& M_0=\langle \pa_\la \psi_S,  \pa_\la \phi_S\rangle_{dx}-\langle \pa_\la\phi_S, \pa_\la\psi_S\rangle_{dx},\\
		% & D_1=\langleD_\la \psi_S, (a_0-1) D_\la \phi_S\rangle_{dx}-\langleD_\la\phi_S, (a_1-1)D_\la\psi_S\rangle_{dx},\\
		&  M_1=\langle \pa_\la^2\phi_S, \e^{i\Th}w\rangle_{dx}-\langle \pa_\la^2\psi_S, \e^{i\Th}v\rangle_{dx}
	\end{split}
\end{equation}
be the $8\times 8$ matrices. The above computations imply that  the  modulation curve $\la(t)$ verifies the equation
\begin{equation}
	\label{modeq}
	(M_0+M_1)\dot{\ga}=K(t;\la(t)),\quad \dot{\la}=\dot\ga+V(\la), \quad \ga(0)=\la(0)
\end{equation}
such that $\la(0)$ verifies the orthogonality condition \eqref{orthcond} initially. This modulation equation is coupled to the nonlinear wave equation \eqref{eq:CSF:fix} under fixed connection field $A^{\ep}$. 
%%%%%%%%%%%%%%%%
\subsection{Nondegeneracy of the Modulation Equations}
In view of the above modulation equation \eqref{modeq} for the modulation curve $\gamma$, to obtain estimates for $\ga(t)$, we show that the leading
coefficient matrix $M_0$ is non-degenerate while  $M_1$ is an error matrix relying on the error term $(v, w)$.

Since the initial data are close to some stable soliton with parameter $\la_0\in \Lambda$.  Let $\delta_0>0$ be a small positive constant such that the following set
\begin{equation}
	\label{defofLadel}
	\La_{\delta_0}=\{(\om, \th, \xi, u+u_0), \quad  |\om-\om_0|+ |u |<\delta_0\}
\end{equation}
is a subset of  $\La $. We first show that the leading matrix $M_0$ in the above modulation equation  \eqref{modeq} is non-degenerate. 
\begin{lemma}
	\label{nondegD}
	Let $M_0$ be the $8\times 8$ matrix defined in \eqref{defofM}. For
	$\la=(\om, \th, \xi, u)\in \La_{\delta_0} $, we have the bound
	\begin{equation*}
		|\det M_0|\geq C(\delta_0,\la_0) >0
	\end{equation*}
	for some positive constant $C(\delta_0,\la_0)$ depending only on $\delta_0$ and $\la_0$. In particular, $M_0$ is non-degenerate.
\end{lemma}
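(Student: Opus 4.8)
The plan is to recognize $M_0$ as the symplectic form of the linearized equation \eqref{eq:NLW:p} restricted to the soliton manifold, expressed in the $\partial_\la$-basis, and to reduce non-degeneracy to a short computation at a standing wave. First I would observe that, since $\l a,b\r_{dx}$ is a symmetric real bilinear form, the $8\times8$ matrix $M_0$ in \eqref{defofM} is real and antisymmetric; its eigenvalues then come in pairs $\pm i\mu_j$, so $\det M_0=\prod_j\mu_j^2\ge0$, and it suffices to show $\det M_0>0$ with a lower bound uniform on $\La_{\de_0}$. Next I would remove the $\th$ and $\xi$ dependence: shifting $\th$ multiplies $\phi_S$, $\psi_S$ and all of their $\la$-derivatives by one common phase $e^{ic}$, while shifting $\xi$ translates all of them by one common vector, and neither operation changes the real $L^2$ pairings, so $M_0$ depends on $\la=(\om,\th,\xi,u)$ only through $(\om,u)$. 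Hence it is enough to bound $\det M_0$ below on the compact set $K_{\de_0}=\{(\om,u):|\om-\om_0|\le\de_0,\ |u-u_0|\le\de_0\}$, which for $\de_0$ small lies well inside $\{\tfrac{p-1}{6-2p}<\om^2/m^2<1,\ |u|<1\}$. Using the scaling identity and the exponential-decay estimates of \prop{prop:ground:f}, every entry of $M_0$ is a continuous function of $(\om,u)$ on $K_{\de_0}$, so $\det M_0$ is continuous on this compact set and it suffices to check $\det M_0\neq0$ pointwise.

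I would then evaluate $M_0$ at the standing wave $u=0$ (and $\th=\xi=0$, permitted by the reduction above). Using $\partial_{u_j}z|_{u=0}=0$, $\partial_{u_j}\Th|_{u=0}=-\om x_j$ and $\psi_S|_{u=0}=i\om f_\om$ one gets $\partial_\om\phi_S=\partial_\om f_\om$, $\partial_\th\phi_S=if_\om$, $\partial_{\xi_j}\phi_S=-\partial_jf_\om$, $\partial_{u_j}\phi_S=-i\om x_jf_\om$, together with the analogous formulas for $\partial_\la\psi_S$. Substituting into \eqref{defofM}, the radial symmetry of $f_\om$ annihilates every integral pairing an odd function with an even one, and after integrating by parts (so that $\int x_jf_\om\,\partial_if_\om\,dx=-\tfrac12\de_{ij}\|f_\om\|_{L^2}^2$ and $\int\partial_if_\om\,\partial_jf_\om\,dx=\tfrac13\de_{ij}\|\nabla f_\om\|_{L^2}^2$) the only surviving entries are the $(\om,\th)$ entry, equal to $\partial_\om\!\big(\om\|f_\om\|_{L^2}^2\big)$, and the three $(\xi_j,u_j)$ entries, each equal to $-\big(\om^2\|f_\om\|_{L^2}^2+\tfrac13\|\nabla f_\om\|_{L^2}^2\big)$. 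Thus $M_0|_{u=0}$ is block-diagonal and
\[
\det M_0\big|_{u=0}=\Big(\partial_\om\big(\om\|f_\om\|_{L^2}^2\big)\Big)^{2}\Big(\om^2\|f_\om\|_{L^2}^2+\tfrac13\|\nabla f_\om\|_{L^2}^2\Big)^{6}.
\]
The second factor is strictly positive; by the scaling and energy identities of \prop{prop:ground:f}, $\partial_\om(\om\|f_\om\|_{L^2}^2)$ is a nonzero multiple of $(m^2-\om^2)^{\frac{7-3p}{2(p-1)}-1}\big(m^2-\tfrac{6-2p}{p-1}\om^2\big)$, which cannot vanish on $\La$ precisely because there $\om^2/m^2>\tfrac{p-1}{6-2p}$ — this is exactly the convexity (Vakhitov--Kolokolov) condition built into the definition of $\La$. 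Hence $\det M_0|_{u=0}\neq0$.

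Since the reference velocity $u_0$ need not be zero, the non-vanishing must be propagated from $u=0$ to all $|u|<1$, and this is the step I expect to be the main obstacle. I would use the Lorentz covariance of the pairing behind $M_0$: the bilinear form $\big((\de\phi_1,\de\psi_1),(\de\phi_2,\de\psi_2)\big)\mapsto\l\de\psi_1,\de\phi_2\r_{dx}-\l\de\phi_1,\de\psi_2\r_{dx}$ is the conserved symplectic current of the linearization of \eqref{eq:NLW:p}, hence slice-independent on linearized solutions and invariant under the Poincar\'e action, and the vectors $\partial_\la(\phi_S,\psi_S)$ are linearized solutions. Reparametrizing a neighborhood of a moving soliton as (Lorentz boost)$\,\circ\,$(standing wave with parameters $(\om',\th',\xi')$) — a local diffeomorphism $\Psi$ of the $8$-dimensional parameter space — gives $\det M_0(\la)=(\det d\Psi)^2\,\det\widetilde M_0(\Psi(\la))$, and the boost-invariance of the symplectic form forces $\det\widetilde M_0$ to be, up to a nonzero factor, independent of the boost velocity and equal to the standing-wave value above; with the continuity established earlier this yields $|\det M_0|\ge C(\de_0,\la_0)>0$ on $\La_{\de_0}$. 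The delicate point is to make this covariance precise at the level of the $L^2$-type pairing. A fully elementary substitute, which avoids symplectic language but is bookkeeping-heavy, is to differentiate $z(x;\la)$, $\Th(x;\la)$, $\rho$ in all eight parameters, change variables $x\mapsto z$ (Jacobian $\rho$), note that the common factor $e^{i\Th}$ cancels in every pairing, and check by the same parity-in-$z$ cancellations and integrations by parts that, for each $u$, $M_0$ is conjugate to a block-diagonal matrix built from the two scalar invariants $\partial_\om(\om\|f_\om\|_{L^2}^2)$ and $\om^2\|f_\om\|_{L^2}^2+\tfrac13\|\nabla f_\om\|_{L^2}^2$ up to explicit powers of $\rho$.
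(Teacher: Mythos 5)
Your reduction to $(\om,u)$-dependence via phase/translation invariance, and your evaluation at $u=0$, are both correct and agree with the paper's answer at $u=0$. But the step you flag as ``the main obstacle'' is exactly where your proposal has a genuine gap, and it is the whole content of the lemma: $\La_{\delta_0}$ consists of velocities within $\delta_0$ of $u_0\neq 0$, so the $u=0$ value is of no direct use unless you can actually transport nondegeneracy to a neighborhood of $u_0$. Your first route (Lorentz covariance of the conserved symplectic current) is conceptually appealing but left entirely as a sketch --- you would need to construct the reparametrization $\Psi$, verify that $\pa_\la(\phi_S,\psi_S)$ are genuinely related by a boost of \emph{linearized solutions} (not just of the soliton profile on a fixed slice), and compute $\det d\Psi$ --- and you yourself concede this is ``delicate.'' Your second route (``elementary substitute'') is essentially the paper's proof, but you do not carry it out, and the phrase ``conjugate to a block-diagonal matrix'' is not quite right: for $u\neq 0$ the off-diagonal block $(M_0)_{\om\xi}=\rho\,u\,\pa_\om G$ is \emph{nonzero}, so $M_0$ is not block-diagonal in the $\pa_\la$ basis.

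What the paper does instead is the direct computation for arbitrary $u$: the nonvanishing blocks are $(M_0)_{\om\th}=\pa_\om(\om\|f_\om\|_{L^2}^2)$, $(M_0)_{\om\xi}=\rho u\,\pa_\om G$, and $(M_0)_{\xi u}=-\rho G(I+\rho^2 u\otimes u)$, with all other blocks (in particular $(M_0)_{\om u}$, $(M_0)_{\th\xi}$, $(M_0)_{\th u}$, and the diagonal $(M_0)_{\xi\xi}$, $(M_0)_{uu}$) vanishing. The key structural observation is that the $\th$ row and column contain a single nonzero entry, so the $(\om,\xi)$ block --- though nonzero --- drops out of the determinant, leaving
\[
\det M_0=\bigl|\pa_\om(\om\|f_\om\|_{L^2}^2)\bigr|^2\,\bigl|\det(M_0)_{\xi u}\bigr|^2=\bigl|\pa_\om(\om\|f_\om\|_{L^2}^2)\bigr|^2\,G^{6}\rho^{10},
\]
using $\det(I+\rho^2 u\otimes u)=\rho^2$. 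Your identification of the Vakhitov--Kolokolov factor $\pa_\om(\om\|f_\om\|_{L^2}^2)$ and its sign via the scaling identity is then exactly the paper's final step. So: correct at $u=0$, correct final ingredients, but the bridge to $u\neq 0$ --- which is the actual lemma --- is not established; you should simply compute the remaining blocks of $M_0$ explicitly for general $u$ rather than try to argue by covariance.
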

\begin{proof} 
This has been shown for example in \cite{yang4}. For readers' interest, we reprove it here. 
	By definition of translated solitons, we can compute the components of the matrix $M_0$
	\begin{align*}
		(M_0)_{\om\th}&=\pa_\om(\om \|f_\om\|_{L^2}^2),\quad (M_0)_{\om\xi}= \rho u \pa_\om G,\quad  (M_0)_{\xi u}=-\rho G(I+\rho^2 u\cdot u),\\
		(M_0)_{\om u}&= (M_0)_{\th \xi}=(M_0)_{\xi\xi}=(M_0)_{uu}=(M_0)_{\th u}= (M_0)_{\th\th}=0.
	\end{align*}
	Here we denote that 
	\begin{equation}
		\label{defofB}
		G(\omega)=\om^2 \|f_\om(x)\|_{L^2(\mathbb{R}^3)}^2+\frac{1}{3}\|\nabla_x f_\om(x)\|_{L^2(\mathbb{R}^3)}^2.
	\end{equation}
	Since the matrix  $M_0$ is anti-symmetric, we can compute that
	\[
	\det M_0=|\pa_\om(\om \|f_\om\|_{L^2}^2)|^2\cdot |\det (M_0)_{\xi
		u}|^2=|\pa_\om(\om \|f_\om\|_{L^2}^2)|^2 G^{6}\rho^{10}.
	\]
	In view of the scaling property of  the ground state  $f_{\om}(x)$ in Proposition \eqref{prop:ground:f}, we compute that
	\[
	\pa_\om(\om \|f_\om\|_{L^2}^2)=(m^2-\frac{6-2p}{p-1}\om^2)(m^2-\om^2)^{\frac{9-5p}{2(p-1)}}\|f\|_{L^2(\mathbb{R}^3)}^2,
	\]
	which is strictly negative for the stable solitons as 
	$
	\frac{p-1}{6-2p} < \frac{\om^2}{m^2} <1.
	$ 
	The lemma then follows as $\rho\geq 1$.
\end{proof}
%%%%%%%%%%%%%%%%
\subsection{Initial Data}
We assume that the initial data are close to some stable soliton with parameter $\lambda_0$, which however may not verify the orthogonality condition \eqref{orthcond}. By using implicit functional theorem, we show in this section that up to an acceptable error, we can find a parameter $\la(0)$ such that the orthogonality condition holds initially. 
\begin{lemma}
	\label{datapre}
	Assume that for some $\la_0\in \La$ the initial data are close to the associated stable soliton
	\[
	\|\phi_0(x)-\phi_S(x;\la_0)\|_{H^1(\mathbb{R}^3)}+\|\phi_1(x)-\psi_S(x;\la_0)\|_{L^2(\mathbb{R}^3)}\leq \ep.
	\]
	Then there exists a positive constant $\ep_1(\la_0)$,
	depending only on $\la_0$, such that if $\ep < \ep_1(\la_0)$, then there exists
	$\la(0)\in  \La$ with the property that if
	\begin{equation*}
		\begin{cases}
			\phi_0(x)=\phi_S(x;\la(0))+\e^{i\Th(\la(0))}v(x;\la(0)),\\
			\phi_1( x)=\psi_S(x;\la(0))+\e^{i\Th(\la(0))}w(x;\la(0)),
		\end{cases}
	\end{equation*}
	then the orthogonality condition holds
	\begin{align*}
		&\langle \pa_\la\psi_S(x;\la(0)),
		\e^{i\Th(\la(0))}v(x;\la(0))\rangle_{dx}=\langle \pa_\la\phi_S(x;\la(0)),
		\e^{i\Th(\la(0))}w(x;\la(0))\rangle_{dx}.
	\end{align*}
	Moreover, we have
	\begin{align*}
		&|\la(0)-\la_0|\leq C(\la_0)\ep,\\
		& \|v(x;\la(0))\|_{H^1}+\|w(x;\la(0))\|_{L^2}\leq C(\la_0)\ep
	\end{align*}
	for some constant $C(\la_0)$ depending only on $\la_0$.
\end{lemma}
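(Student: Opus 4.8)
The plan is to solve the orthogonality condition \eqref{orthcond} at $t=0$ for the parameter $\la(0)$ via a quantitative implicit function theorem, tracking all constants in terms of $\la_0$ alone. With $v:=\e^{-i\Th(\la)}(\phi_0-\phi_S(\cdot;\la))$ and $w:=\e^{-i\Th(\la)}(\phi_1-\psi_S(\cdot;\la))$, and using that the phase factors $\e^{\pm i\Th}$ cancel inside the $L^2$ pairing, the condition \eqref{orthcond} at $t=0$ is equivalent to $\Phi(\la;\phi_0,\phi_1)=0$, where
\begin{align*}
	\Phi(\la;\phi_0,\phi_1):=\l\pa_\la\psi_S(\cdot;\la),\,\phi_0-\phi_S(\cdot;\la)\r_{dx}-\l\pa_\la\phi_S(\cdot;\la),\,\phi_1-\psi_S(\cdot;\la)\r_{dx}\in\mathbb{R}^8.
\end{align*}
By the exponential decay of $f_\om$ and its derivatives (\prop{prop:ground:f}) together with the explicit, $x$-affine structure of $z(\cdot;\la)$ and $\Th(\cdot;\la)$ in \eqref{z}, \eqref{def: phiS, psiS}, the map $\la\mapsto(\phi_S(\cdot;\la),\psi_S(\cdot;\la))$ is $C^2$ from the neighborhood $\La_{\delta_0}$ of $\la_0$ (see \eqref{defofLadel}) into $H^1\times L^2$, with all relevant norms bounded by a constant depending only on $\la_0$. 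Consequently $\Phi$ is $C^1$ in $\la$ on $\La_{\delta_0}$ and affine — hence globally Lipschitz — in $(\phi_0,\phi_1)$, uniformly.

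First I would record the base point: $\Phi(\la_0;\phi_S(\cdot;\la_0),\psi_S(\cdot;\la_0))=0$. Differentiating in $\la$ there, the terms in which $\pa_\la$ lands on the difference factors $\phi_0-\phi_S(\la)$ or $\phi_1-\psi_S(\la)$ are killed by the vanishing of those factors at the base point, leaving
\begin{align*}
	\pa_\la\Phi\big|_{\la=\la_0,\ \phi_0=\phi_S(\la_0),\ \phi_1=\psi_S(\la_0)}=-\l\pa_\la\psi_S,\pa_\la\phi_S\r_{dx}+\l\pa_\la\phi_S,\pa_\la\psi_S\r_{dx}=-M_0,
\end{align*}
with $M_0$ the matrix of \eqref{defofM}. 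By \lem{nondegD}, $|\det M_0|\ge C(\delta_0,\la_0)>0$, and combined with the uniform upper bound on the entries of $M_0$ this gives a bound on $M_0^{-1}$ depending only on $\la_0$.

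Next I would invoke the quantitative implicit function theorem to produce $\ep_1(\la_0)>0$ and a $C^1$ solution map $(\phi_0,\phi_1)\mapsto\la(0)\in\La_{\delta_0}\subset\La$, defined whenever $\|\phi_0-\phi_S(\cdot;\la_0)\|_{H^1}+\|\phi_1-\psi_S(\cdot;\la_0)\|_{L^2}\le\ep<\ep_1(\la_0)$, with Lipschitz constant governed by $\|M_0^{-1}\|$ and the uniform $C^1$ bounds on $\Phi$. Since $\Phi(\la_0;\phi_0,\phi_1)$ is linear in $(\phi_0-\phi_S(\la_0),\phi_1-\psi_S(\la_0))$ with coefficients $\pa_\la\psi_S,\pa_\la\phi_S\in L^2$ bounded uniformly, one gets $|\Phi(\la_0;\phi_0,\phi_1)|\les\ep$, hence $|\la(0)-\la_0|\les\ep$. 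Finally, $\|v\|_{H^1}+\|w\|_{L^2}$ is estimated by splitting $\phi_0-\phi_S(\la(0))=(\phi_0-\phi_S(\la_0))+(\phi_S(\la_0)-\phi_S(\la(0)))$ — the second term $\les|\la(0)-\la_0|\les\ep$ in $H^1$ by the $C^1$ dependence — and similarly for $\phi_1$, together with the fact that multiplication by $\e^{\pm i\Th(\cdot;\la(0))}$ is bounded on $H^1$ because $\nab\Th(\cdot;\la)$ is bounded on $\La_{\delta_0}$; this yields $\|v\|_{H^1}+\|w\|_{L^2}\les\ep$.

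The main obstacle is purely uniformity bookkeeping: one must check that the soliton map $\la\mapsto(\phi_S(\cdot;\la),\psi_S(\cdot;\la))$ and its first two $\la$-derivatives are bounded in $H^1\times L^2$ uniformly over the fixed neighborhood $\La_{\delta_0}$ of $\la_0$ — exactly where the exponential decay from \prop{prop:ground:f} and the explicit formulas \eqref{z}, \eqref{def: phiS, psiS} enter — so that the implicit function theorem applies with a radius $\ep_1(\la_0)$ and Lipschitz constants depending on $\la_0$ only. The genuinely analytic input, non-degeneracy of the leading matrix, is already furnished by \lem{nondegD}.
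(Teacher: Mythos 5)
Your proposal follows essentially the same route as the paper: define the vector-valued functional $\Phi$ encoding the orthogonality condition, identify $\pa_\la\Phi$ at the base point with $\pm M_0$, invoke the nondegeneracy of $M_0$ from Lemma~\ref{nondegD}, and apply the implicit function theorem to solve for $\la(0)$ with a Lipschitz solution map. One small slip in the narration: when you differentiate $\Phi$ at the base point, it is the terms where $\pa_\la$ falls on $\pa_\la\psi_S$ or $\pa_\la\phi_S$ (producing $\pa_\la^2\psi_S,\ \pa_\la^2\phi_S$) that are annihilated by the vanishing of the difference factors $\phi_0-\phi_S(\la_0),\ \phi_1-\psi_S(\la_0)$; the terms where $\pa_\la$ lands on the difference factors themselves are exactly the ones that survive and produce $-M_0$, so the sentence as written has the roles reversed even though the resulting matrix is correct.
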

\begin{proof}
	Define a functional $\mathcal{F}: H^1\times L^2\times
	\mathbb{R}^8\rightarrow \mathbb{R}^8$ such that
	\begin{align*}
		\mathcal{F}(v, w, \la)=&\langle   \pa_\la\phi_S(\la;x),\phi_1(x)
		-\psi_S(\la;x)\rangle_{dx}-\langle  
		\pa_\la\psi_S(\la;x),\phi_0(x)-\phi_S(\la;x)
		\rangle_{dx}.
	\end{align*}
	In particular, we have $\mathcal{F}(0, 0, \la_0)=0$. Notice that
	\begin{align*}
		\mathcal{F}_\la(0, 0, \la_0)=\langle  \pa_\la\psi_S, \pa_\la \phi_S\rangle_{dx}-\langle   \pa_\la\phi_S, \pa_\la \psi_S\rangle_{dx}
		=M_0.
	\end{align*}
	In view of Lemma \ref{nondegD}, we can conclude that if $\ep$ is sufficiently small, depending only on $\la_0$, $\mathcal{F}_\la(0, 0, \la_0)$ is invertible.  Since the functional $\mathcal{F}$ is Lipschitz continuous in $(v, w)$, the implicit function theorem then implies that there exists $\la(0)\in \La$ satisfying
	the orthogonality condition \eqref{orthcond} and the estimates in the lemma hold.
\end{proof}
%%%%%%%%%%%%%%%%
\subsection{Bootstrap assumptions}
The previous section indicates that we may assume the initial data are close to some stable soliton such that   the orthogonality condition holds. To prove Proposition \ref{prop:fix}, we use bootstrap argument. The local existence result for the nonlinear wave equation \eqref{eq:CSF:fix} is standard. In particular there is a short time solution $\phi(t, x)$. For the modulation equation \eqref{modeq}, the leading matrix $M_0$ is non-degenerate. We can bound the error matrix $M_1$ 
\begin{align*}
	|M_1|&=|\langle \pa_\la^2\phi_S, \e^{i\Th}w\rangle_{dx}-\langle \pa_\la^2\psi_S, \e^{i\Th}v\rangle_{dx}|\\
	&\les \|v\|_{L^2}+\|w\|_{L^2}\les \ep. 
\end{align*}
Here note that initially the center of the soliton $|\xi(0)|\les \ep$ in view of Lemma \ref{datapre}. In particular for sufficiently small $\ep$, we can solve the wave equation \eqref{eq:CSF:fix} and the modulation equation \eqref{modeq} locally. To extend the solution to the long time $T/\ep $, we expect that the modulation curve $\ga$ stays in the set $\La_{\delta_0}$ and the error matrix $M_1$ keeps small. To solve the wave equation \eqref{eq:CSF:fix}, we also need to control the center of the soliton. Recall that 
\[
\dot\la(t)=V(t)+\dot\gamma(t),\quad V(t)=(0, \frac{\om}{\rho}, u(t)+u_0, 0),\quad \xi(t)=\eta(t)+u_0t.
\]
We expect that $|\dot \gamma|$ is of size $\ep^2$. Therefore we can  conclude that 
\[
|\eta(t)|\leq |\xi(0)|+|u(t)-u(0)|+Ct^2\ep^2\leq |\xi(0)|+(C+1)T^2,\quad \forall t\leq T/\ep.
\]
We remark here that this is compatible with the main theorem   as if we scale it back to the
space $[0, T]\times \mathbb{R}^3$, the center of the particle  becomes $(t, \ep\eta+u_0 t)$ which is close to the straight line $(t, u_0 t)$. In addition to the assumption that $\la(t)\in \La_{\delta_0}$, we make the following bootstrap assumptions 
\begin{align}
	&\label{baxi}
	|\eta(t)|\leq 2C_2, \quad \forall t\in[0, T/\ep],\\
	&\label{bawv}
	|u|+\|w(t, x)\|_{L^2(\mathbb{R}^3)}+\|v(t, x)\|_{H^1(\mathbb{R}^3)}\leq 2\delta_1,\quad \forall t\in[0, T/\ep]
\end{align}
for some positive constants $C_2$, $\delta_1$ which will be fixed later. Without loss of generality, we assume that $$C_2 >1, \quad \delta_1  <1, \quad C_2\delta_1\leq 1, \quad C_2^4\ep <1.$$  
%%%%%%%%%%%%%%%%
\subsubsection{Estimates for the Modulation Curve}
Under the above bootstrap assumptions, we first control the modulation curve. 
We have shown that the leading matrix $M_0$ is non-degenerate as long as $\la\in \La_{\delta_0}$. To obtain necessary estimate for the modulation curve $\ga(t)$ by
using the modulation equation, we need  to show that $M_1$,  
$K(t;\la(t))$ are error terms. By the definition of $M_1$, under the bootstrap assumption \eqref{bawv}, we can bound that 
\begin{align}
	\label{D1}
	|M_1|\les (\|v\|_{L^2}+\|w\|_{L^2})(\|\pa_\la \phi_S\|_{L^2}+\|\pa_\la \psi_S\|_{L^2})\leq C \delta_1
\end{align}
for some constant $C$ depending only on $\delta_0$. We now can   choose $\delta_1$ as follows in order to make the matrix $M_0+M_1$ non-degenerate: For fixed $\delta_0$, let  $\delta_1$
be sufficiently small such that
\begin{equation*}
	\delta_1\leq
	C^{-1}\frac{1}{10} C(\delta_0, \la_0)^{\frac{1}{8}}.
\end{equation*}
Here $C(\delta_0, \la_0)$ is the lower bound  appeared in Lemma  \ref{nondegD}, which is independent of $\ep$ and $C_2$. Now we are ready to estimate the modulation curve $\dot \ga$. In particular we can solve the ODE \eqref{modeq} and the modulation curve $\gamma(t)$ verifies the bound 
\[
|\dot \ga|\les |K(t; \la(t))|. 
\]
Recall that $K$ consists of two types of nonlinear terms: the nonlinear interaction of the solution with the connection field and the nonlinear terms arising from the potential. For  complex number $\zeta$, let
\begin{equation*}
	N(\zeta)= \frac{1}{p+1}|\zeta|^{p+1}.
\end{equation*}
In particular, we have $\pa_{\bar \zeta}N=\f12
|\zeta|^{p-1}\zeta$, where $\bar \zeta$ is the complex conjugate of $\zeta$. In view of the 
decomposition for the solution, we can write the nonlinearity 
\[
\mathcal{N}(\la)=2\pa_{\bar \zeta}N (f_\om+ v)-2\pa_{\bar \zeta}N(f_\om)-2\pa_{\bar \zeta} \pa N(f_\om)\cdot
v.
\]
Here $\pa N\cdot v=\pa_\zeta N v+\pa_{\bar \zeta}N \bar v$. We have the following bounds for the nonlinearity. 
\begin{lemma}
	\label{lemnonlinear}
	Assume $\phi$ decomposes as \eqref{eq:decomp}. For all $p\geq 2$, we have
	\begin{align}
		\label{Nvpt}
		&|\mathcal{N}(\la)|\les |v|^2+|v|^p,\\
		\label{nonlinq} &\left|N(f_\om+v)-N(f_\om)-\pa N(f_\om)v-\f12
		v\pa^2N(f_\om)v\right|\les |v|^3+|v|^{p+1}.
	\end{align}
\end{lemma}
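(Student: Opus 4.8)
The plan is to read both quantities as Taylor remainders of the nonlinearities
$g(\zeta):=|\zeta|^{p-1}\zeta=2\pa_{\bar\zeta}N(\zeta)$ and $N(\zeta)=\frac{1}{p+1}|\zeta|^{p+1}$
expanded at the base point $f_\om=f_\om(z(x))$ in the increment $v$. Indeed, substituting $\phi=\e^{i\Th}(f_\om+v)$ and $\phi_S=\e^{i\Th}f_\om$ in \eqref{eq:def4:N} shows that
$\mathcal{N}(\la)=g(f_\om+v)-g(f_\om)-Dg(f_\om)\cdot v$, where $Dg$ denotes the real differential; a direct computation of $Dg$ at a positive real number $a$ gives $Dg(a)\cdot v=a^{p-1}v+(p-1)a^{p-1}\,Re(v)$, which matches exactly the terms subtracted in $\mathcal{N}(\la)$ (equivalently $Dg(f_\om)\cdot v=2\pa_{\bar\zeta}\pa N(f_\om)\cdot v$ since mixed Wirtinger derivatives commute). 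Likewise the left-hand side of \eqref{nonlinq} is precisely the second-order Taylor remainder of the real-valued function $N$ at $f_\om$. So the lemma reduces to remainder estimates for $g$ and $N$.

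The only genuine subtlety is that $g$ and $N$ are not smooth at the origin; away from $0$ they are smooth with $|D^2 g(\zeta)|\les|\zeta|^{p-2}$ and $|D^3 N(\zeta)|\les|\zeta|^{p-2}$, and the hypothesis $p\geq2$ is exactly what keeps these bounds locally bounded (the exponent $p-2$ being $\geq0$). Since by \prop{prop:ground:f} we have $0<f_\om(x)\leq a_\om:=\max_y f_\om(y)<\infty$, with $a_\om$ uniformly bounded for $\om$ in the compact range under consideration, I would argue pointwise in $x$ and split into two regimes. On $\{\,|v(x)|\leq\tfrac12 f_\om(z(x))\,\}$ the segment joining $f_\om$ to $f_\om+v$ stays in $\{\tfrac12 f_\om\leq|\zeta|\leq\tfrac32 f_\om\}$, hence away from $0$ and inside the fixed ball $\{|\zeta|\leq\tfrac32 a_\om\}$; Taylor's formula with the integral form of the remainder then yields $|\mathcal{N}(\la)|\les\sup|D^2 g|\,|v|^2\les a_\om^{p-2}|v|^2\les|v|^2$, and analogously the remainder in \eqref{nonlinq} is $\les\sup|D^3 N|\,|v|^3\les|v|^3$.

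On the complementary set $\{\,|v(x)|>\tfrac12 f_\om(z(x))\,\}$ one has $f_\om\leq 2|v|$, so Taylor expansion is unavailable but every term is bounded individually by a power of $|v|$: $|g(f_\om+v)|\leq(f_\om+|v|)^p\les|v|^p$, $|g(f_\om)|=f_\om^p\les|v|^p$, $|Dg(f_\om)\cdot v|\les f_\om^{p-1}|v|\les|v|^p$, giving $|\mathcal{N}(\la)|\les|v|^p$; similarly $|N(f_\om+v)|$, $|N(f_\om)|$, $|DN(f_\om)\cdot v|$, $|D^2N(f_\om)[v,v]|$ are all $\les|v|^{p+1}$, giving the bound $|v|^{p+1}$ for the remainder in \eqref{nonlinq}. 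Combining the two regimes (and using $|v|^p\leq|v|^2+|v|^p$, $|v|^{p+1}\leq|v|^3+|v|^{p+1}$) gives \eqref{Nvpt} and \eqref{nonlinq}. I do not expect a real obstacle here; the only point needing care is the dichotomy forced by the non-smoothness of $|\zeta|^{p-1}\zeta$ at $\zeta=0$ and the precise role of $p\geq2$ in controlling the highest derivative ($\sim|\zeta|^{p-2}$) near the soliton amplitude.
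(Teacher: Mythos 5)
Your proof is correct and, like the paper's, is in essence a Taylor-remainder estimate that exploits $p\geq 2$ to keep the highest derivative of the nonlinearity (which scales like $|\zeta|^{p-2}$) locally bounded. The execution differs: the paper writes $\mathcal{N}(\la)$ as a single double-integral remainder,
\[
\mathcal{N}(\la)=4\int_0^1\int_0^1 s\,v\,\pa^2\pa_{\bar\zeta}N(f_\om+tsv)\,v\,dt\,ds,
\]
and then uses the global pointwise bound $|\pa^2\pa_{\bar\zeta}N(\zeta)|\les 1+|\zeta|^{p-2}$ together with $|f_\om+tsv|\les 1+|v|$ (since $f_\om$ is bounded), producing $|\mathcal{N}|\les |v|^2(1+|v|^{p-2})=|v|^2+|v|^p$ in one stroke; \eqref{nonlinq} is treated identically with one more derivative on $N$. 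You instead split pointwise into $|v|\leq\tfrac12 f_\om$ (Taylor remainder on a segment that stays in a fixed annulus away from the origin, giving the $|v|^2$ resp.\ $|v|^3$ bound) and $|v|>\tfrac12 f_\om$ (term-by-term crude bounds, giving $|v|^p$ resp.\ $|v|^{p+1}$). Both arguments are valid and yield the same conclusion. The dichotomy has the small advantage of never touching $\pa^2 g$ (or $\pa^3 N$) near the origin, whereas the integral representation implicitly relies on the singular set $\{f_\om+tsv=0\}$ being null in $(t,s)$; the paper's route is shorter and makes the role of the exponent $p-2$ more transparent. Either way the hypothesis $p\geq 2$ is used in the same place, to guarantee local boundedness of the relevant derivative.
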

\begin{proof}
	We can write $\mathcal{N}(\la)$ as an integral
	\begin{align*}
		\mathcal{N}(\la)=4\int_{0}^{1}\int_{0}^{1}s v  \pa^2\pa_{\bar
			\zeta}N(f_\om+ts v)vdtds.
	\end{align*}
	Since $p\geq 2$, we can show that
	\[
	|\pa^2 \pa_{\bar \zeta}N(f_\om+ts v)|\les 1+|f_\om + ts  v|^{p-2}\les 1+| v|^{p-2}.
	\]
	Thus the bound  \eqref{Nvpt} holds. The second inequality \eqref{nonlinq} follows in a similar way.
\end{proof}
We are now ready to show that the nonlinearity  $K(t;\la(t))$ is in fact
higher order error terms. 
\begin{proposition}
	\label{lemDF}
	Under the bootstrap assumptions \eqref{baxi}, \eqref{bawv}, we have
	\begin{align*}
		|K|&\les C_2\ep^2+\|v\|_{H^1}^2.
	\end{align*}
\end{proposition}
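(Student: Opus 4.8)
The plan is to estimate the three terms in the expression \eqref{eq:def4F} for $K(t;\la(t))$, namely
\[
K = -\langle b^\mu \pa_\la\phi_S,\pa_\mu\phi\rangle_{dx} - \langle \pa_\la\phi_S, c\phi\rangle_{dx} - \langle \pa_\la\phi_S, \e^{i\Th}\mathcal{N}(\la)\rangle_{dx},
\]
recalling $b^\mu = 2i(A^\ep)^\mu$ and $c = -(A^\ep)^\mu(A^\ep)_\mu$. For the nonlinear term I would use \lem{lemnonlinear}: since $\pa_\la\phi_S$ is Schwartz with uniformly bounded weighted $L^2$ and $L^\infty$ norms over $\la\in\La_{\delta_0}$, the bound \eqref{Nvpt} gives $|\langle \pa_\la\phi_S, \e^{i\Th}\mathcal{N}(\la)\rangle_{dx}| \les \|v\|_{L^2}^2 + \|v\|_{L^{2p}}^{2p}$, and Sobolev embedding $H^1(\mathbb{R}^3)\hookrightarrow L^6$ together with the bootstrap smallness $\|v\|_{H^1}\le 2\delta_1 < 1$ absorbs the higher power into $\|v\|_{H^1}^2$ (for $2\le p < 7/3$ one has $2p \le 6$, so $L^{2p}$ is controlled by interpolation between $L^2$ and $L^6$). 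This produces the $\|v\|_{H^1}^2$ contribution.

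For the two connection terms, the key input is the pointwise bound \eqref{eq:tildeA:bd} on $\tilde A^\ep$ together with the explicit background connection \eqref{connection background}, which gives $|A_b^\ep(x)| \les \ep^2|x|$, hence $|A^\ep(x)| \les \ep\min\{1,\ep(1+|x-\xi(t)|)^{1/2}\} + \ep^2|x|$. The soliton factor $\pa_\la\phi_S$ decays exponentially around the center $\xi(t) = \eta(t)+u_0 t$, with $|\eta(t)|\le 2C_2$ by \eqref{baxi}; so the growth $\ep^2|x| \les \ep^2(|x-\xi| + |\eta| + |u_0 t|)$ — and here the factor $|u_0 t| \les T/\ep$ multiplied by $\ep^2$ gives $T\ep$, which is too big. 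The crucial point is to exploit the translation covariance: writing $\pa_\la\phi_S(x;\la)$ in the variable $z = \rho P_u(x-\xi) + (I-P_u)(x-\xi)$, one changes variables $x \mapsto z$, and then $|x - \xi(t)| \les 1 + |z|$ on the effective support, so the weight $|A_b^\ep|^2$ against the exponentially-decaying soliton contributes only the constant $|\eta(t)|^2 + 1 \les C_2^2$ worth of growth after integration, not the $t$-growth. This handles $\langle \pa_\la\phi_S, c\phi\rangle$ since $|c|=|A^\ep|^2 \les \ep^2 + \ep^4|x-\xi|^2 + \ep^4(C_2 + T/\ep)^2$... but wait — one must be careful: $\ep^4(T/\ep)^2 = T^2\ep^2$, which is fine, so in fact $|c|\les \ep^2$ uniformly on the soliton support after accounting for $C_2$-dependence, giving $|\langle\pa_\la\phi_S,c\phi\rangle|\les \ep^2(\|\phi_S\|_{L^2} + \|v\|_{L^2}) \les \ep^2 + \ep^2\|v\|_{L^2}$; for the $b^\mu$ term similarly $|b^\mu| = 2|A^\ep| \les \ep$ on the core but $\ep^2|x|$-growing tails, and pairing against the exponentially decaying $\pa_\la\phi_S$ while using $\|\pa_\mu\phi\|_{L^2} \les \|\psi_S\|_{L^2} + \|\nabla\phi_S\|_{L^2} + \|w\|_{L^2} + \|\nabla v\|_{L^2} \les 1$ yields $|\langle b^\mu\pa_\la\phi_S,\pa_\mu\phi\rangle| \les C_2\ep^2 + \ep\|v\|_{H^1}$. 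Collecting all contributions and using $\|v\|_{H^1} \le 2\delta_1 < 1$ to bound $\ep\|v\|_{H^1} + \ep^2\|v\|_{L^2} \les C_2\ep^2 + \|v\|_{H^1}^2$ (via Young's inequality, after noting $\ep < \ep^* \le$ small) gives the claimed estimate $|K| \les C_2\ep^2 + \|v\|_{H^1}^2$.

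The main obstacle is the careful bookkeeping of the growing background connection $A_b^\ep$ against the soliton support: one must confirm that the linear-in-$t$ growth of $|A_b^\ep|$ along the trajectory is neutralized, and the mechanism is precisely that the soliton's center moves parallel to $u_0$ (the magnetic field direction), so that — as emphasized in the introduction and formalized via the gauge choice \eqref{connection background} — the relevant combination $u_0^k (A_b^\ep)_k$ vanishes and only the bounded-in-$t$ transverse displacement $\eta(t)$ contributes; replacing the weight $|x|$ by $|z| + |\eta(t)| + O(1)$ under the soliton integral is the technical heart. A secondary point is simply verifying the exponents: that $2p \le 6$ for $p < 7/3$ so the nonlinearity stays in the Sobolev range, which is routine.
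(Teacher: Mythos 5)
Your proposal follows the same overall route as the paper's proof and correctly identifies the structural fact that drives the estimate: the background gauge potential $A_b^\ep$ from \eqref{connection background} involves only $x_1, x_2$, the directions transverse to $u_0 \parallel e_3$, so that on the soliton support $|x_1|+|x_2|\les |z|+|\eta(t)|$ is bounded uniformly in $t$ by the bootstrap \eqref{baxi}. The treatment of the nonlinear term via Sobolev embedding is also the paper's argument. However, a few of the intermediate bounds you write down are either not literally true or too crude to close the estimate.

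\begin{itemize}
\item The substitution you propose, ``replace the weight $|x|$ by $|z|+|\eta(t)|+O(1)$,'' is false: on the soliton support $|x_3|$ is of order $T/\ep$. What is true, and what the paper uses (see \eqref{eq:Omphis}), is $|x_1|+|x_2|\les |z|+|\eta(t)|$; this is enough only because $A_b^\ep$ depends solely on $x_1,x_2$.
\item The bound ``$|b^\mu|\les\ep$ on the core'' paired with ``$\|\pa_\mu\phi\|_{L^2}\les 1$'' gives only $O(\ep)$ for $\langle b^\mu\pa_\la\phi_S,\pa_\mu\phi\rangle_{dx}$, which does not yield the asserted $C_2\ep^2+\ep\|v\|_{H^1}$. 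The paper gets $O(\ep^2)$ by (i) using the sharper pointwise estimate $|\tilde A^\ep|\les \ep^2(1+|x-\xi(t)|)^{1/2}$ near the soliton, i.e.\ \eqref{eq:tildeA:bd}, which comes from Hardy's inequality under \eqref{eq:cond4:A}; (ii) noting $|A_b^\ep|\les\ep^2(|z|+|\eta|)\les C_2\ep^2$ on the soliton support; and (iii) decomposing $\pa_\mu\phi$ into $\pa_\mu\phi_S$ (resp.\ $\psi_S$) plus $\pa_\mu(e^{i\Th}v)$ (resp.\ $e^{i\Th}w$) so that the soliton contribution is weighted against the exponentially decaying $\pa_\la\phi_S$. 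The paper packages (ii)--(iii) by rewriting $(A_b^\ep)^j\pa_j\phi_S$ as $\tfrac{\ep^2}{2}\Omega_{12}\phi_S$ and invoking \eqref{eq:Omphis}.
\item Two minor points: the identity $u_0^k(A_b^\ep)_k=0$ is true but is not what is used; the operative fact is that $A_b^\ep$ is independent of $x_3$ and hence of $\xi_3 = \eta_3+u_0^3 t$. Also the nonlinear term gives $\|v\|_{L^2}^2+\|v\|_{L^p}^p$ (not $\|v\|_{L^{2p}}^{2p}$); this does not change the conclusion since $p\geq 2$ and Sobolev still yields $\les\|v\|_{H^1}^2$.
\end{itemize}

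So the mechanism you cite is the right one, but to actually close the proof you need the $\ep^2$-level pointwise control of $A^\ep$ near the soliton (from \eqref{eq:tildeA:bd} plus the $|x_1|+|x_2|\les|z|+|\eta|$ observation) and the decomposition of $\pa_\mu\phi$ around $\phi_S$, rather than the crude $|b^\mu|\les\ep$ plus a global $L^2$ bound on $\pa_\mu\phi$.
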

\begin{proof}
	Recall the definition for $K$. For the nonlinear terms, in view of the above Lemma \ref{lemnonlinear} and the exponential decay of the soliton, we can show that 
	\begin{align*}
		|\langle \pa_\la \phi_S, \e^{i\Th}\mathcal{N}(\la)\rangle_{dx}|\les \int_{\mathbb{R}^3}|\pa_\la\phi_S| (|v|^2+|v|^{p})dx\les \|v\|_{H^1}^2
	\end{align*}
	by using Sobolev embedding. For the other two linear terms arising from the connection field, decompose them round the soliton and the constant connection field $A^\ep_b$. We first can show that 
	\begin{align*}
		|c\phi|&=|(A^\ep)^\mu (A^\ep)_\mu (\phi_S+e^{i\Theta}v)|\les (|\tilde{A}^\ep|^2+   |A^\ep_b|^2 )(|\phi_S|+|v|),\\ 
		|b^\mu \pa_\mu\phi|&=|(A^\ep)^0(\psi_S+e^{i\Theta}w)+(A^\ep)^j\pa_j(\phi_S+e^{i\Theta}v)|\\ 
		& \les |\tilde{A}^\ep|(|\psi_S|+|w|+|v|+|\nabla v|+|\nabla \phi_S|)+\ep^2 |\Omega_{12}\phi_S|+   |A^\ep_b||\nabla v|.  
	\end{align*}
	Here $\Omega_{12}=x_1\pa_2-x_2\pa_1$. By the definition of $\phi_S$, we compute that
	\begin{align*}
		\Omega_{12}\phi_S= e^{i\Theta} (\nabla f_{\omega}(z)\Omega_{12}z+i\Omega_{12}\Theta f_{\omega})=e^{i\Theta} (\nabla f_{\omega}(z)\Omega_{12}z-\omega i\Omega_{12}z\cdot  (u+u_0) f_{\omega}).
	\end{align*}
	Note that $$\Omega_{12}(x-\xi)=(-x_2,x_1, 0),$$ which is orthogonal to the vector field $u_0$ (parallel to the constant magnetic field $(0, 0, 1)$). Recall that $\xi=\eta+u_0t$. We thus can bound that 
	\begin{align}
		\label{eq:Omphis}
		|\Omega_{12}\phi_S|\les (|x_1|+|x_2|) (|f_{\omega}|+|\nabla f_{\omega}|)\les (|z|+|\eta|)(|f_{\omega}|+|\nabla f_{\omega}|) .
	\end{align}
	In view of the bootstrap assumption \eqref{baxi} and combining all the above computations, we then can show that   
	\begin{align*}
		& |\langle b^\mu \pa_\la \phi_S, \pa_\mu\phi\rangle_{dx}|+|\langle \pa_\la\phi_S, c\phi\rangle_{dx}|\\ 
		& \les  (\|w\|_{L^2}+\|v\|_{H^1})(\| (|f_{\omega}|+|\nabla f_{\omega}|) (|\tilde{A}^\ep|^2 +\ep^4 (x_1^2+x_2^2)+\ep^2(|x_1 |+|x_2|)\|_{L^2}) \\
		&\quad +\ep^2\|(|z|+|\eta|)(|f_{\omega}|+|\nabla f_{\omega}|)^2\|_{L^1}+\||\tilde{A}^\ep| |\pa_\lambda\phi_S|(|\psi_S|+|\nabla\phi_S|+|\phi_S|)\|_{L^1}\\  
		&\les  C_2\ep^2+(\|w\|_{L^2}+\|v\|_{H^1})(\ep^2+\ep^4 C_2^2+\ep^2C_2)+\ep^2 \| (1+|z|)^{\frac{1}{2}}f_\om(z)(f_\om(z)+|\nabla f_\om(z)|)\|_{L^1_z}\\ 
		&\les C_2\ep^2.
	\end{align*} 
	Here the bound for $\tilde{A}^\ep$ follows from the bootstrap assumption \eqref{eq:cond4:A} and the estimate \eqref{eq:tildeA:bd}. We also used the fact that 
	\[
	|x-\xi(t)|\les |z|
	\]
	and the ground state $f_{\om}(z)$ decays exponentially. 
	\end{proof}
The above estimate then implies that the modulation curve stays close to the integral curve of $V(\la)$.
\begin{corollary}
	\label{corcontrga}
	Suppose $\la(t)\in \La_{\delta_0}$ and $\eta(t)$, $w, v$ satisfy the bootstrap assumptions \eqref{baxi},
	\eqref{bawv}. Then we have
	\begin{equation*}
		|\dot{\ga}|\les |K|\les C_2\ep^2+\|v\|_{H^1}^2.
	\end{equation*}
\end{corollary}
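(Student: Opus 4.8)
The plan is to read the estimate directly off the modulation equation \eqref{modeq}, namely $(M_0+M_1)\dot{\ga}=K(t;\la(t))$, together with the bounds already assembled above. First I would invoke Lemma \ref{nondegD}: since $\la(t)\in\La_{\delta_0}$, the leading matrix satisfies $|\det M_0|\geq C(\delta_0,\la_0)>0$, and moreover the entries of $M_0$ are themselves bounded by a constant depending only on $\delta_0$ and $\la_0$ (they are built from $\pa_\la\phi_S$, $\pa_\la\psi_S$, which decay exponentially and depend continuously on $\la$). Hence $\|M_0^{-1}\|$ is controlled by a constant depending only on $\delta_0$ and $\la_0$.

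Next I would control the error matrix $M_1$. By its definition in \eqref{defofM}, Cauchy--Schwarz, and the exponential decay of $\pa_\la^2\phi_S$, $\pa_\la^2\psi_S$, the bootstrap assumption \eqref{bawv} gives $|M_1|\les \|v\|_{L^2}+\|w\|_{L^2}\les \delta_1$, which is precisely \eqref{D1}. Since $\delta_1$ was fixed small enough (the condition $\delta_1\leq C^{-1}\frac{1}{10}C(\delta_0,\la_0)^{\frac{1}{8}}$ introduced right after \eqref{D1}), the perturbation $M_1$ cannot destroy invertibility: $M_0+M_1$ is non-degenerate with $\|(M_0+M_1)^{-1}\|\les 1$, the implicit constant depending only on $\delta_0$ and $\la_0$. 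Consequently $|\dot{\ga}|=|(M_0+M_1)^{-1}K|\les |K|$.

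Finally I would quote Proposition \ref{lemDF}, which under the same bootstrap assumptions \eqref{baxi}, \eqref{bawv} yields $|K|\les C_2\ep^2+\|v\|_{H^1}^2$. Chaining the two inequalities gives $|\dot{\ga}|\les |K|\les C_2\ep^2+\|v\|_{H^1}^2$, which is the claim. There is no genuine obstacle here: the corollary is a bookkeeping step combining the non-degeneracy of $M_0+M_1$ with the nonlinear estimate for $K$; the only point requiring slight care is tracking that every implicit constant depends solely on $\la_0$, $p$, $m$, $T$ (and not on $\ep$, $\delta$, or $C_2$), which holds because the smallness threshold for $\delta_1$ was chosen independently of those parameters.
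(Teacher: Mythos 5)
Your proposal matches the paper's argument exactly: the paper establishes $|\dot\gamma|\les|K|$ by invoking the non-degeneracy of $M_0$ (Lemma \ref{nondegD}), the smallness bound \eqref{D1} for $M_1$ together with the choice of $\delta_1$, and then concludes via Proposition \ref{lemDF}. This is the same bookkeeping the paper does in the paragraph preceding the corollary, so there is nothing to add.
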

This corollary shows that as long as the radiation term $v$ in the decomposition \eqref{eq:decomp} of the solution is small, we can solve the modulation equations \eqref{modeq} and obtain estimates for the modulation curve $\dot \ga(t)$. The modulation equations are used to guarantee the orthogonality condition \eqref{orthcond}. Next, we show that under the orthogonality condition, the energy $\|v\|_{H^1}+\|w\|_{L^2}$ of the radiation term $(v, w)$ is small.
%%%%%%%%%%%%%%%%
\subsubsection{Energy Decomposition}
First define the total charge
\begin{equation}
	\label{defcharge}
	Q(t):=-\int_{\mathbb{R}^3}\langle i \pa_t\phi, \phi\rangle dx.
\end{equation}
In view of the decomposition \eqref{eq:decomp}, we have 
\begin{align*}
	Q(t) &:=-\langle i\psi_S, \phi_S\rangle_{dx}-\langle iw, v\rangle_{dx}-\langle i\psi_S, \e^{i\Th}v\rangle_{dx}-\langle i\e^{i\Th}w, \phi_S\rangle_{dx}.
\end{align*} 
Observe that $\pa_\th\phi_S=i\phi_S, \pa_\th\psi_S=i\psi_S$. The orthogonality condition \eqref{orthcond} indicates that 
\[
\langle i\psi_S, \e^{i\Th}v\rangle_{dx}+\langle i\e^{i\Th}w, \phi_S\rangle_{dx}=\langle \pa_\th\psi_S, \e^{i\Th}v\rangle_{dx}-\langle \pa_\th\phi_S, \e^{i\Th}w\rangle_{dx}=0.
\]
We can compute the soliton part
\[
\langle i\psi_S, \phi_S\rangle_{dx}=\int_{\mathbb{R}^3}-\rho\om f_\om^2 dx=-\om\|f_\om\|_{L^2},
\]
which then leads to the decomposition of the total charge
\begin{equation}
	\label{Qdecomp}
	Q(t)=\om\|f_\om\|_{L^2}^2-\langle iw, v\rangle_{dx}.
\end{equation}
Next for $k=1, 2, 3$, define the angular momentum 
\begin{equation}
	\label{eq:def:Pik}
	\Pi_k(t)=\int_{\mathbb{R}^3} \langle \pa_t\phi, \pa_k\phi\rangle dx.
\end{equation}
Similarly, we decompose the angular momentum around the soliton 
\begin{equation*}
	\begin{split}
		\Pi_k(t) 
		&=\int_{\mathbb{R}^3}\langle\psi_S+\e^{i\Th}w, \pa_k(\phi_S+\e^{i\Th}v)\rangle dx\\ 
		&=\langle\psi_S, \pa_k\phi_S\rangle_{dx}-\langle \pa_k\psi_S,  \e^{i\Th}v\rangle_{dx}+\langle\e^{i\Th}w, \pa_k\phi_S\rangle_{dx}+
		\langle\e^{i\Th}w, \pa_k(\e^{i\Th}v)\rangle_{dx}.
	\end{split}
\end{equation*}
For the soliton part, note that  $dz=\rho dx$. We can compute
\begin{align*}
	\langle\psi_S, \pa_k\phi_S\rangle_{dx}
	&=\langle\e^{i\Th}(i\rho \om f_\om-\rho (u+u_0)\cdot \nabla_z f_\om), \e^{i\Th}(-i\rho\om f_\om (u+u_0)_k+\nabla_z f_\om \cdot\frac{\pa z}{\pa x_k})\rangle_{dx}\\ 
	&=-\int_{\mathbb{R}^3}\rho^2\om^2 f_\om^2 (u+u_0)_k+ \rho (u+u_0)\cdot \nabla_z f_\om \nabla_z f_\om \cdot \frac{\pa z}{\pa x_k}dx\\ 
	&=-\rho G (u+u_0)_k.
\end{align*} 
Here  $G$ is given in line \eqref{defofB}.

For the crossing term,  in view of the orthogonality condition \eqref{orthcond},  we can show that  
\begin{align*}
	\langle \pa_k\psi_S,  \e^{i\Th}v\rangle_{dx}-\langle\e^{i\Th}w, \pa_k\phi_S\rangle_{dx}  
	= -\langle \pa_{\xi_k}\psi_S,  \e^{i\Th}v\rangle_{dx}+\langle\e^{i\Th}w, \pa_{\xi_k}\phi_S\rangle_{dx}   =0.
\end{align*}
Therefore we end up with the decomposition for the angular momentum
\begin{equation}
	\label{Pidecomp}
	\begin{split}
		\Pi_k(t)=-\rho G (u+u_0)_k+\langle\e^{i\Th}w, \pa_k(\e^{i\Th}v)\rangle_{dx}.
	\end{split}
\end{equation}
Now we define the standard energy 
\begin{align}
	\label{eq:def4:Pi0}
	\Pi_0(t)=\frac{1}{2}\int_{\mathbb{R}^3}|\pa_t\phi|^2+|\nabla\phi|^2+m^2|\phi|^2-\frac{2}{p+1}|\phi|^{p+1}dx. 
\end{align}
For the quadratic terms, in view of the decomposition \eqref{eq:decomp}, we have 
\begin{equation*}
	\begin{split}
		&\int_{\mathbb{R}^3}\langle  \pa\phi, \pa\phi\rangle  +m^2\langle\phi, \phi\rangle dx\\
		&=\int_{\mathbb{R}^3} | \nabla (\phi_S+\e^{i\Th} v)|^2 +|\psi_S+\e^{i\Th} w |^2 +m^2| \phi_S+\e^{i\Th} v|^2 dx\\
		&=\int_{\mathbb{R}^3}|\nabla\phi_S|^2+|\psi_S|^2+m^2|\phi_S|^2 +m^2 | v|^2 +|w|^2 +
		|\nabla (\e^{i\Th}v)|^2 \\
		&\quad    +2\langle \nabla \phi_S, \nabla(\e^{i\Th}v)\rangle +2\langle\e^{i\Th}w, \psi_S\rangle +2m^2\langle\phi_S,
		\e^{i\Th}v\rangle dx.
	\end{split}
\end{equation*}
For the nonlinear term, we expand it up to the second order
\begin{align*}
	-\int_{\mathbb{R}^3}N(\phi)dx&=-\int_{\mathbb{R}^3}N(f_\om+ v)-N(f_\om)-\pa N(f_\om) v-\f12 v\pa^2N(f_\om)v\ dx\\
	&\quad - \int_{\mathbb{R}^3}\frac{1}{p+1}f_\om^{p+1}+f_\om^p  v_1+\f12 f_\om^{p-1} (|v|^2+(p-1)|v_1|^2)\ dx\\
	&=- \int_{\mathbb{R}^3}\frac{1}{p+1}f_\om^{p+1}+f_\om^p
	v_1+\f12f_\om^{p-1}(|v|^2+(p-1)|v_1|^2)\ dx+O(\|v\|_{H^1}^3).
\end{align*}
Here the higher order nonlinear term is bounded by using Sobolev embedding and  Lemma \ref{lemnonlinear}. We also use the assumption that $p\geq 2$.  
Combining these together, we obtain that 
\begin{align*}
	\Pi_0(t)&=\f12\int_{\mathbb{R}^3}|\nabla\phi_S|^2+|\psi_S|^2+m^2f_\om^2-\frac{2}{p+1}f_\om^{p+1}
	+m^2 |v|^2
	+|\nabla(\e^{i\Th}v)|^2  dx\\
	&+\langle \nabla \phi_S, \nabla(\e^{i\Th}v)\rangle_{dx}+\f12\langle w,
	w\rangle_{dx}+\langle\e^{i\Th}w, \psi_S\rangle_{dx}+m^2\langle\phi_S,
	\e^{i\Th}v\rangle_{dx} \\
	& -\langle f_\om^p, v_1\rangle_{dx}-\f12\int_{\mathbb{R}^3}f_\om^{p-1}|v|^2+(p-1)f_\om^{p-1}v_1^2 dx+O(\|v\|_{H^1}^3).
\end{align*}
For the soliton part, by using the scaling property for the ground state in Proposition \ref{prop:ground:f}, we compute that 
\begin{align*}
	& \f12\int_{\mathbb{R}^3}|\nabla\phi_S|^2+|\psi_S|^2+m^2f_\om^2-\frac{2}{p+1}f_\om^{p+1}dx\\
	=&\f12  \int_{\mathbb{R}^3}\rho^2\om^2|u+u_0|^2f_\om^2+|\nabla_x f |^2+2\rho^2\om^2 f_\om^2+ m^2f_\om^2-\frac{2}{p+1}f_\om^{p+1} dx \\
	=& \f12\int_{\mathbb{R}^3}2\rho^2\om^2 f_\om^2+(m^2-\om^2)f_\om^2+(\frac{1}{3}+\frac{2}{3}\rho^2)|\nabla_z f|^2-\frac{2}{p+1}f_\om^{p+1}dx\\
	=& \rho\left(\om^2\|f_\om\|_{L^2}^2+\frac{1}{3}\|\nabla_z f_\om\|_{L^2}^2\right)=\rho G.
\end{align*}
For the first order terms of $(v, w)$, in view of the  identity \eqref{idenofphiS} and the orthogonality
condition \eqref{orthcond}, we show that 
\begin{align*}
	& \langle \nabla \phi_S, \nabla (\e^{i\Th}v)\rangle_{dx}+\langle\psi_S, \e^{i\Th}w\rangle_{dx}+\langle m^2\phi_S, \e^{i\Th}v\rangle_{dx}-\langle f_\om^p, v_1\rangle_{dx}\\
	&= \langle\psi_S, \e^{i\Th}w\rangle_{dx}+\langle-\Delta \phi_S+m^2\phi_S-\e^{i\Th}f_\om^p,\e^{i\Th}v\rangle\\
	&= \langle \pa_\la \phi_S\cdot V(\la), \e^{i\Th}w\rangle-\langle \pa_\la \psi_S \cdot V(\la), \e^{i\Th}v\rangle =0.
\end{align*}
Combining all the above estimates, we therefore have the following decomposition for the standard energy 
\begin{equation}
	\label{Hdecomp}
	\begin{split}
		\Pi_0(t)=\rho G+\f12\int_{\mathbb{R}^3}m^2 |v|^2
		+|\nabla(\e^{i\Th}v)|^2 +|w|^2-f_\om^{p-1}|v|^2 -(p-1)f_\om^{p-1}v_1^2 dx +O( \|v\|_{H^1}^3 ) .
	\end{split}
\end{equation}
%%%%%%%%%%%%%%%
\subsubsection{Weighted energy estimates in the exterior region}
\label{sec:weightE}
The main difficulty of  the long time dynamics for charged scalar fields on constant electromagnetic field is that the background connection field  $A^\ep_b$ grows linearly at spatial infinity. The associated error terms are of the form $ \||x| v\|_{L^2}$ which could not be bounded by the standard energy of the remainder terms $(v, w)$. This is the main reason that in the works
\cite{Stuart09:MKG}, \cite{Eamonn06:MKG}, the background connection field $A^\ep_b$ was assumed to be uniformly bounded. Note that the soliton decays exponentially, such weighted error terms can be controlled if we have the associated weighted energy estimates for the full solution $\phi$. We use the ideas in the works \cite{Yang:mMKG}, \cite{YangYu:MKG:smooth} for the study of massive Maxwell-Klein-Gordon system and show that outside of large forward light cone, the weighted energy estimate for the scalar field in uniformly bounded. 

\bigskip

Since initially the data are close to some soliton, which decays exponentially, for sufficiently large $R_0>0$, depending only on $\la_0$, we can require that 
\begin{align*}
	\int_{|x|\geq R_0} (1+|x|^2)(|\nabla\phi_0|^2+|\phi_0|^2+|\phi_1|^2) dx\leq 10\ep^2,
\end{align*}
which follows, in view of the assumption \eqref{eq:ID:sca:w0} on the initial data in Proposition \ref{prop:fix}, by choosing $R_0$ such that 
\begin{align*}
	\int_{|x|\geq R_0} (1+|x|^2)(|\nabla\phi_S(x;\la_0)|^2+|\phi_S(x;\la_0)|^2+|\psi_S(x;\la_0)|^2) dx\leq \ep^2. 
\end{align*}
The solution to the equation \eqref{eq:CSF:fix} exists locally in the exterior region $\{ t+R_0\leq |x|, \quad t\leq T/\ep\}$. We  use bootstrap argument to  show that the energy through the out going null hypersurface and the constant time hypersurface in the exterior region is uniformly bounded. For this purpose, let 
\[
L=\pa_t+\pa_r,\quad r=|x|,\quad \D=(D_1,D_2, D_3)- r^{-1}x D_r.
\]
Here the covariant derivative associated to the connection field $A^\ep$ is defined by $D=\pa+i A^\ep$. 

For $0\leq t\leq T/\ep$ and $R\geq R_0$, let $\Sigma_{t, R}$ be the constant time hypersurface 
\[
\Sigma_{s, R}=\{(t, x), t=s,\quad |x|\geq R+s\}
\]
and $\mathcal{C}_{t, R}$ be the out going null cone
\[
\mathcal{C}_{s, R}=\{ (t, x),\quad 0\leq t\leq s,\quad |x|-s=R\}. 
\]
For all $0\leq t\leq T/\ep$ and $R\geq R_0$, denote  
\[
\mathcal{E}_{t, R}=\int_{\mathcal{C}_{t, R}} |D_L\phi|^2+| \D\phi|^2+m^2|\phi|^2 d\sigma +\int_{ \Sigma_{t, R}}|D\phi|^2+m^2|\phi |^2dx.
\]
Here $d\sigma$ is the surface measure on the out going null hypersurface $\mathcal{C}_{t, R}$. Obviously on the initial hypersurface, the assumption on the initial data implies that 
\[
\mathcal{E}_{0, R}\les R^{-2} \ep^2,\quad \forall R\geq R_0. 
\] 
For solution $\phi$ to the equation \eqref{eq:CSF:fix}, define the associated energy momentum tensor 
\[
T_{\mu\nu}[\phi]=\langle D_\mu\phi, D_\nu\phi\rangle-\f12 m_{\mu\nu}(\langle D^{\ga}\phi, D_\ga \phi\rangle+2\mathcal{V}(\phi))
\]
with  the potential 
\[
\mathcal{V}(\phi)=\frac{m^2}{2}|\phi|^2-\frac{1}{p+1}|\phi|^{p+1}=\frac{m^2}{2}|\phi|^2-N(\phi)
\]
and the Minkowski metric $m_{\mu\nu}$  on the Minkowski space $\mathbb{R}^{1+3}$. For any vector field $Y$, we have the energy identity
\begin{equation}
	\label{enerestfor}
	\pa^{\mu}( T_{\mu\nu}[\phi]Y^\nu)=  T^{\mu\nu}[\phi]\pi^{Y}_{\mu\nu}+ \langle\Box_{A^\ep} \phi-m^2\phi+|\phi|^{p-1}\phi, D_Y \phi\rangle+\l Y^\nu D^{\gamma}\phi, i F_{\gamma\nu}^\ep \phi \r,
\end{equation}
where $\pi^{Y}_{\mu\nu}=\f12 \mathcal{L}_Y m_{\mu\nu}$ is the deformation tensor of the vector field $Y$ along the flat Minkowski metric $m_{\mu\nu}$. Apply the Killing vector field $\pa_t$ to the region bounded by $\Sigma_{t, R}$, $\mathcal{C}_{t, R}$ and the initial hypersurface $\Sigma_{0, R}$. We obtain the energy identity 
\begin{align*}
	&\int_{\Sigma_{t, R}}|D\phi|^2+m^2|\phi|^2-\frac{2}{p+1}|\phi|^{p+1}dx +2\int_{\mathcal{C}_{t, R}} |D_L\phi|^2+|\D\phi|^2 +m^2|\phi|^2-\frac{2}{p+1}|\phi|^{p+1} d\sigma\\ 
	&=\int_{\Sigma_{0, R}}|D\phi|^2+m^2|\phi|^2-\frac{2}{p+1}|\phi|^{p+1}dx  -2\int_0^t \int_{\Sigma_{s, R}} \l   D^{\gamma}\phi, i F_{\gamma 0}^\ep \phi \r dxds
\end{align*}
for solution $\phi $ to the equation \eqref{eq:CSF:fix}.

For $0\leq t\leq T/\ep$, $R\geq R_0$, we make the following bootstrap assumption on the energy $\mathcal{E}_{t, R}$
\begin{equation}
	\label{eq:bt:ex}
	\mathcal{E}_{t, R}\leq 2C \mathcal{E}_{0, R}
\end{equation}
for some constant $C$ depending only on $p$, $T$ and the mass $m>0$. 
For the nonlinear term, we use Gagliardo-Nirenberg interpolation inequality to bound that 
\begin{align*}
	\int_{\Sigma_{t, R}} |\phi|^{p+1}dx +\int_{\mathcal{C}_{t, R}}   |\phi|^{p+1} d\sigma\les \|D\phi\|_{L^2}^{ \frac{3(p-1)}{2}}\|\phi\|_{L^2}^{\frac{5-p}{2} }\les  C^{\frac{p+1}{2}}(\mathcal{E}_{0, R})^{p+1}.
\end{align*}
Here we used the fact that $|\pa |\phi||\leq |D\phi|$. For the nonlinear term arising from the connection field, by using the assumption \eqref{eq:cond4:A} for $A^\ep$ and the above bootstrap assumption \eqref{eq:bt:ex}, we can estimate that 
\begin{align*}
	|\int_0^t \int_{\Sigma_{s, R}} \l   D^{\gamma}\phi, i F_{\gamma 0}^\ep \phi \r dxds|\les \ep^2 \int_0^t |\phi||D\phi| dx ds\les \ep^2 \int_0^t \mathcal{E}_{s, R} ds\les \ep^2 C \mathcal{E}_{0, R} T/\ep \les C \ep \mathcal{E}_{0, R}.  
\end{align*}
Therefore the above energy identity leads to 
\begin{align*}
	\mathcal{E}_{t, R} \leq C_1\left(  \mathcal{E}_{0, R}+C^{\frac{p+1}{2}}\mathcal{E}_{0, R}^{p+1}+  C \ep \mathcal{E}_{0, R}\right)
\end{align*}
for some constant $C_1$ depending only on $p$, $T$ and $m>0$. Let $C=2C_1$ and for sufficiently small $\ep>0$, depending only on $p$, $T$ and $m$, we conclude that 
\begin{align*}
	\mathcal{E}_{t, R}\leq C \mathcal{E}_{0, R},\quad \forall R\geq R_0,\quad 0\leq t\leq T/\ep. 
\end{align*}
We thus improved the bootstrap assumption \eqref{eq:bt:ex}, which in particular implies that 
\begin{align*}
	\int_{\Sigma_{t, R}} |D\phi|^2+m^2|\phi|^2 dx\les \mathcal{E}_{0, R},\quad \forall t\leq T/\ep, \quad R\geq R_0.
\end{align*}
Integrate in terms of $R$. We derive that 
\begin{align*}
	\int_{R_1}^\infty \int_{\Sigma_{t, R}} |D\phi|^2+m^2|\phi|^2 dx dR & =\int_{\Sigma_{t, R_1}} (|x|-t-R_1) (|D\phi|^2+m^2|\phi|^2)dx\\ 
	&\les \int_{\Sigma_{0, R_1}} (|x|-R_1) (|D\phi|^2+m^2|\phi|^2)dx\\ 
	&\les  \int_{\Sigma_{0, R_1}} (1+|x|) (|D\phi|^2+m^2|\phi|^2)dx+(1+R_1)\mathcal{E}_{0, R_1}. 
\end{align*}
In particular we have 
\begin{align*}
	\int_{\Sigma_{t, R_1}} (1+|x|) (|D\phi|^2+m^2|\phi|^2)dx \les \int_{\Sigma_{0, R_1}} (1+|x|) (|D\phi|^2+m^2|\phi|^2)dx+(1+T/\ep +R_1) \mathcal{E}_{0, R_1}.
\end{align*}
Integrate in $R_1$ again. Similarly we can derive that 
\begin{align*}
	&\int_{\Sigma_{t, R_1}} (1+|x|^2) (|D\phi|^2+m^2|\phi|^2)dx \\ 
	&\les \int_{\Sigma_{0, R_1}} (1+|x|^2) (|D\phi|^2+m^2|\phi|^2)dx + (1+T/\ep +R_1)^2 \mathcal{E}_{0, R_1}\\ 
	&+(1+T/\ep +R_1) \int_{\Sigma_{0, R_1}} (1+|x|) (|D\phi|^2+m^2|\phi|^2)dx\\ 
	&\les \ep^2 +(1+T/\ep +R_1)\ep^2 (1+R_1)^{-1}+ (1+T/\ep +R_1)^2 \ep^{2}(1+R_1)^{-2}\\
	&\les 1
\end{align*}
for all $0\leq t\leq T/\ep$ and $R_1\geq R_0$. To summarize, we have shown in this section that 
\begin{align}
	\label{eq:weighted:phi:ex}
	\int_{\Sigma_{t, R_0}} (1+|x|^k) (|D\phi|^2+m^2|\phi|^2)dx \les    \ep^{2-k},\quad k=1, 2. 
\end{align}
%%%%%%%%%%%%%%%%
\subsubsection{Energy Estimates for the Full Solution}
For solution to the wave equation \eqref{eq:CSF:fix}, the energies defined in the previous section are not conserved. Based on the weighted mass bound in the exterior region, we demonstrate in this section that they are almost conserved. 
\begin{proposition}
	\label{propcon}
	Assume that the connection field $A^\ep$ verifies the condition  \eqref{eq:cond4:A}. Under the bootstrap assumptions \eqref{baxi}, \eqref{bawv}, the energies satisfy the following bound   
	\begin{align*}
		|Q(t)-Q(0)|&\les  \ep^2+\ep \|v\|_{L^2}^2,\\ 
		\left|\Pi_k(t)-\Pi_k(0)+ \int_0^t \int_{\mathbb{R}^3}    \langle   i \pa_k(A^\ep)^\mu \pa_\mu\phi_S , \phi_S\rangle   dx ds\right| &\les \ep^2+\ep^2 \int_0^t \|v\|_{H^1}+\|w\|_{L^2} ds+\ep \|v\|_{L^2}^2 \\ 
		\left|\Pi_0(t)-\Pi_0(0)+\int_0^t \int_{\mathbb{R}^3}    \langle   i \pa_t(A^\ep)^\mu \pa_\mu\phi_S , \phi_S\rangle dx ds\right|  
		&\les \ep^2+  \ep^2 \int_0^t\|v\|_{H^1}+\|w\|_{L^2}ds\\ 
		&\quad + \ep(|u(t)|+\|v(t, x)\|_{H^1})+\ep \|v\|_{L^2}^2
	\end{align*}
	for all $0\leq t\leq T/\ep$. Here we used $\pa_0\phi_S=\pa_t\phi_S$ to stand for $\psi_S$. 
	\end{proposition}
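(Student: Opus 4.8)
The plan is to differentiate $Q$, $\Pi_k$, $\Pi_0$ in $t$, substitute the equation \eqref{eq:CSF:fix} in the form \eqref{defofH} (with $b^\mu=2i(A^\ep)^\mu$, $c=-(A^\ep)^\mu(A^\ep)_\mu$), and integrate by parts in $x$; the elliptic, mass and pure-power contributions then cancel exactly as in the uncoupled case \eqref{eq:NLW:p}, and a short computation yields
\begin{align*}
	\frac{d}{dt}Q&=\int_{\mathbb{R}^3}(A^\ep)^\mu\,\pa_\mu|\phi|^2\,dx,\\
	\frac{d}{dt}\Pi_k&=-\int_{\mathbb{R}^3}2(A^\ep)^\mu\,\Im\!\big(\pa_\mu\phi\,\overline{\pa_k\phi}\big)\,dx+\frac12\int_{\mathbb{R}^3}c\,\pa_k|\phi|^2\,dx,\\
	\frac{d}{dt}\Pi_0&=\int_{\mathbb{R}^3}2(A^\ep)^j\,\Im\!\big(\pa_t\phi\,\overline{\pa_j\phi}\big)\,dx+\frac12\int_{\mathbb{R}^3}c\,\pa_t|\phi|^2\,dx,
\end{align*}
where the $\mu=0$ term drops in the last line since $\Im(\pa_t\phi\,\overline{\pa_t\phi})=0$. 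Thus all three right-hand sides are linear or quadratic in $A^\ep$, with no derivatives yet falling on $A^\ep$.

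The second step is to integrate in $s\in[0,t]$ and integrate by parts in $s$, using the Lorentz gauge $\pa^\mu A^\ep_\mu=0$, i.e.\ $\pa_j(A^\ep)^j=-\pa_t(A^\ep)^0$. For $Q$ the spatial part of $(A^\ep)^\mu\pa_\mu|\phi|^2$ becomes, after one spatial integration by parts and the gauge relation, a total time derivative, so $Q(t)-Q(0)=\int_{\mathbb{R}^3}(A^\ep)^0|\phi|^2\,dx\,\big|_{s=0}^{s=t}$, which is the conservation of the gauge-invariant charge; since $(A^\ep_b)_0=0$ one has $(A^\ep)^0=-(\tilde A^\ep)_0$, and \eqref{eq:tildeA:bd} gives the claimed bound. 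For $\Pi_k$, $\Pi_0$ I would use the exact identity $2\,\Im(\pa_\mu\phi\,\overline{\pa_\nu\phi})=\pa_\mu J_\nu-\pa_\nu J_\mu$ with $J_\mu:=\Im(\phi\,\overline{\pa_\mu\phi})$; combining with the gauge relation, the part that is a full spacetime divergence integrates to an endpoint term, while the remaining part is $-\int_0^t\!\int_{\mathbb{R}^3}\pa_k(A^\ep)^\mu J_\mu\,dx\,ds$ (respectively with $\pa_s(A^\ep)^\mu$ for $\Pi_0$). Since $\langle i\,\pa_k(A^\ep)^\mu\pa_\mu\phi_S,\phi_S\rangle=\pa_k(A^\ep)^\mu J[\phi_S]_\mu$ with $J[\phi_S]_\mu=\Im(\phi_S\,\overline{\pa_\mu\phi_S})$, replacing $J_\mu$ by $J[\phi_S]_\mu$ extracts precisely the ``kept'' term of the Proposition; the error $J_\mu-J[\phi_S]_\mu$ is bilinear with a factor of $v$ or $w$, and the quadratic terms $\int c\,\pa|\phi|^2$ are likewise handled by one integration by parts in $s$, which moves $\pa_s$ onto $c$ and produces the gain $|\pa_s c|\les|A^\ep||\pa_s\tilde A^\ep|$ via \eqref{eq:cond4:A} (here $\pa_s A^\ep_b=0$ is used).

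The third step is to bound every remaining term after writing $A^\ep=A^\ep_b+\tilde A^\ep$ and $\phi=\phi_S+\e^{i\Th}v$. For the $\tilde A^\ep$ contributions, \eqref{eq:tildeA:bd} gives $|\tilde A^\ep(t,x)|\les\ep^2(1+|x-\xi(t)|)^{1/2}$ on the region where $\phi_S$ is not already exponentially small and $|\tilde A^\ep|\les\ep$ everywhere; paired against $\phi_S$ (exponential decay, Proposition \ref{prop:ground:f}) this is $\les\ep^2$, paired against $v$ it is $\les\ep\|v\|_{L^2}^2$ or $\les\ep^2\|v\|_{H^1}$, and against $\pa\tilde A^\ep$ one uses \eqref{eq:cond4:A}. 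The delicate terms are those of $A^\ep_b$, for which $|A^\ep_b|\simeq\ep^2|x|$: paired against $\phi_S$ they are harmless because $|\xi(t)|\le 2C_2+t\le 2C_2+T/\ep$ by \eqref{baxi}, so that $\ep^2|x|$ and $\ep^4|x|^2$ integrated against $f_\om(z)^2$ are $\les\ep^2$ (and, using $(A^\ep_b)^3=0$ together with $u_0\parallel(0,0,1)$, the corresponding source pieces carry a factor of the small $u$ rather than $u_0$); but paired against $|v|^2$ (from $c$) and against $|v||\pa v|$ (from the endpoint of $\int(A^\ep)^j J_j$) they require $\int(1+|x|^k)|v|^2\,dx$, which does not follow from the energy $\|v\|_{H^1}$. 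Here I would use the exterior weighted estimate \eqref{eq:weighted:phi:ex}: in the interior $|x-\xi(t)|\les R_0$ the weight is bounded and the integral is $\les\|v\|_{L^2}^2$, while in $\{|x|\ge t+R_0\}$ one has $|x-\xi(t)|\les|x|$ and $\int(1+|x|^k)|v|^2\les\int(1+|x|^k)(|D\phi|^2+|\phi|^2+|\phi_S|^2)\les\ep^{2-k}$, so $\ep^2\int|x||v|^2\les\ep^2(\|v\|_{L^2}^2+\ep)$ and $\ep^4\int|x|^2|v|^2\les\ep^4$. The endpoint terms at $s=0$ are $\les\ep^2$ by Lemma \ref{datapre} ($\|v(0)\|_{H^1}+|u(0)|\les\ep$), at $s=t$ they give the stated $\ep\|v\|_{L^2}^2$ (and for $\Pi_0$ the additional $\ep(|u(t)|+\|v(t)\|_{H^1})$, from pairing $A^\ep_b$ and $\tilde A^\ep$ at the final time with the velocity- and radiation-dependent parts of the solution), and $\ep^2\int_0^t(\|v\|_{H^1}+\|w\|_{L^2})\,ds$ comes from $J_\mu-J[\phi_S]_\mu$ paired against $\pa A^\ep_b$ (a constant of size $\ep^2$) and against $\pa\tilde A^\ep$.

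The main obstacle is exactly this interplay: the linear growth of $A^\ep_b$ means the naive bound on its contribution loses one power of $\ep$ (it is only $O(\ep)$ after $\int_0^{T/\ep}$), and the missing power is recovered only by combining the integration by parts in $s$ (which in the bulk replaces $A^\ep_b$ by $\pa_s\tilde A^\ep\les\ep^2$, leaving the growing connection only in endpoint terms and in the retained source term) with the weighted energy estimate \eqref{eq:weighted:phi:ex} for the full solution in the exterior light cone and the exponential localization of $\phi_S$; matching the interior and exterior regions so that $|x|$, $|x-\xi(t)|$ and the light-cone distance are comparable up to constants controlled by $C_2$ and $T$, and scrupulously tracking which pieces are integrated in $s$ versus evaluated at endpoints, is what produces the precise error structure. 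The retained source term $\int_0^t\!\int\langle i\,\pa_t(A^\ep)^\mu\pa_\mu\phi_S,\phi_S\rangle$ is itself only $O(\ep)$ on its own and is deliberately not estimated here; the extra power for it is gained in the next section via the timelike multiplier $X=\pa_t+u_0^k\pa_k$ and the fact that the soliton travels along $u_0$.
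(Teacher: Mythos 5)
Your plan is, at its core, the same as the paper's proof: differentiate each functional in $t$, substitute the equation in the form \eqref{defofH}, use the exact cancellations of the untwisted part, then rewrite the $b^\mu\pa_\mu\phi$ contributions as a divergence via the identity \eqref{eq:phimu} (equivalently, your current identity $2\Im(\pa_\mu\phi\,\overline{\pa_\nu\phi})=\pa_\mu J_\nu-\pa_\nu J_\mu$ combined with the Lorentz gauge), keep the soliton source term, and bound the remaining endpoint and bulk errors using \eqref{eq:tildeA:bd}, the exterior weighted estimate \eqref{eq:weighted:phi:ex}, the exponential decay of $f_\om$, and the bootstrap assumptions. The $J_\mu$ formalism is notationally different but yields exactly the paper's integration-by-parts identity. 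The structure of the error terms you predict ($\ep^2$, $\ep\|v\|_{L^2}^2$, $\ep^2\int(\|v\|_{H^1}+\|w\|_{L^2})\,ds$, and the extra $\ep(|u|+\|v\|_{H^1})$ from $\ep^2\Omega_{12}$ at endpoint time) also matches.

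However, there is a concrete gap in your justification of the $A_b^\ep$-against-soliton estimate. You claim that $\ep^2\int|x|f_\om(z)^2\,dx\les\ep^2$ and attribute this to $|\xi(t)|\le 2C_2+t\le 2C_2+T/\ep$. That reasoning does not close: since $|x|\les|z|+|\xi(t)|$ and $|\xi(t)|$ can be as large as $T/\ep$, one only obtains $\ep^2\int|x|f_\om^2\,dx\les\ep^2(1+T/\ep)\simeq T\ep$, which is an order of $\ep$ too big for the claimed bound. What actually makes the bound work is the transverse structure of $A_b^\ep$: from \eqref{connection background}, $A_b^\ep$ involves only $x_1,x_2$, and since $u_0\parallel(0,0,1)$ and $\xi(t)=\eta(t)+u_0t$ with $|\eta|\le 2C_2$, the relevant weight is $|x_1|+|x_2|\les|z|+|\eta|\les|z|+C_2$, which is bounded after integrating against $f_\om(z)^2$; moreover, in the key endpoint term $\ep^2\langle i\,\Omega_{12}\phi_S,\phi_S\rangle$, the real part kills $\Omega_{12}f_\om$ and leaves $-\ep^2(\Omega_{12}\Th)f_\om^2$, and $\pa_1\Th,\pa_2\Th\simeq\rho\om\,u_{1},\rho\om\,u_2$ carry the small transverse velocity $u$ rather than $u_0$. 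Your parenthetical remark about ``$(A_b^\ep)^3=0$ together with $u_0\parallel(0,0,1)$'' points at exactly this mechanism, but you apply it to the retained source pieces (which are not estimated in this Proposition) rather than to the $\ep^2|x|$ endpoint terms you are bounding; without it, the crude $|\xi(t)|\les T/\ep$ estimate would leave an $O(T\ep)$ term that is not covered by the stated bound. Similarly, your interior region $\{|x-\xi(t)|\les R_0\}$ is not the paper's $\{|x|\le t+R_0\}$, and the claim that the weight $(1+|x|^k)$ is there ``bounded'' and the integral $\les\|v\|_{L^2}^2$ is literally false (the weight is $\les(T/\ep)^k$); what saves the estimate in the end is the explicit prefactor $\ep^{4}$ or $\ep^{2}$ together with the bootstrap $\|v\|_{L^2}\les\delta_1$ and the time integral producing at most $(T/\ep)^k$, but this needs to be said explicitly, and the intermediate annulus $\{|x-\xi(t)|>R_0,\ |x|<t+R_0\}$ also needs to be covered (the paper does so by bounding $|x_1|+|x_2|\le s+R_0$ throughout the whole interior and using $\|\phi\|_{L^2}\les1$).
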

\begin{proof}
	By the definition of the total charge, in view of the equation \eqref{eq:CSF:fix} and the definition for $b^\mu$, $c$ in  \eqref{defofH}, we have 
	\begin{align*}
		\pa_t Q(t)&=-\int_{\mathbb{R}^3}\l i \pa_{tt} \phi, \phi  \r dx\\ 
		&=\int_{\mathbb{R}^3}\l i  (m^2\phi-|\phi|^{p-1}\phi-\Delta\phi-b^\mu\pa_\mu\phi-c\phi), \phi  \r dx\\
		&=\int_{\mathbb{R}^3}\l   2 (A^\ep)^\mu\pa_\mu\phi , \phi  \r dx \\ 
		&= \int_{\mathbb{R}^3}    (A^\ep)^0 \pa_t|\phi|^2-\pa_k (A^\ep)^k |\phi|^2  dx\\ 
		&=\pa_t \int_{\mathbb{R}^3}    (A^\ep)^0  |\phi|^2   dx.
	\end{align*}
	Here we used the Lorentz gauge condition $\pa^\mu (A^\ep)_{\mu}=0$. In particular we have 
	\begin{align*}
		|Q(t)-Q(0)|\leq \int_{\mathbb{R}^3}    |(A^\ep)^0(t, x)|  |\phi(t, x)|^2   dx + \int_{\mathbb{R}^3}    |(A^\ep)^0(0, x)|  |\phi(0, x)|^2   dx.
	\end{align*}
	Using the assumptions \eqref{eq:cond4:A} and the estimate \eqref{eq:tildeA:bd} for the connection field $A^\ep$, we can bound that 
	\begin{align*}
		|Q(t)-Q(0)|&\les \ep^2\| \phi_0(x)\|_{L^2}^2+\ep^2 \| (1+|x-\xi(t)|)^{\frac{1}{4}} \phi_S(x;\la(t))\|_{L^2}^2+ \ep \|v(x;\la(t))\|_{L^2}^2 \\ 
		& \les  \ep^2+\ep \|v\|_{L^2}^2+\ep^2\|(1+|z|)^{\frac{1}{4}}f_\om(z)\|_{L^2}^2\\ 
		&\les  \ep^2+\ep \|v\|_{L^2}^2. 
	\end{align*}
For the angular momentum, similarly we have 
\begin{align*}
	\pa_t\Pi_k(t)&=\int_{\mathbb{R}^3} \langle \pa_{tt}\phi, \pa_k\phi\rangle + \langle \pa_{t}\phi, \pa_k \pa_t \phi\rangle dx \\ 
	&= \int_{\mathbb{R}^3} \langle  -m^2\phi+|\phi|^{p-1}\phi+\Delta\phi+b^\mu\pa_\mu\phi+c\phi, \pa_k\phi\rangle   dx \\ 
	&= \int_{\mathbb{R}^3} \langle   2i (A^\ep)^\mu\pa_\mu\phi +c\phi, \pa_k\phi\rangle   dx.
\end{align*}
By using the Lorentz gauge condition $\pa^\mu A_\mu^\ep =0$, we note that 
\begin{align*}
	&\langle   i (A^\ep)^\mu\pa_\mu\phi , \pa_\nu\phi\rangle  \\ 
	& =\pa_{\nu} \langle   i (A^\ep)^\mu\pa_\mu\phi , \phi\rangle -   \langle   i \pa_\nu(A^\ep)^\mu \pa_\mu\phi , \phi\rangle -\langle   i (A^\ep)^\mu \pa_\nu\pa_\mu\phi , \phi\rangle \\ 
	&=\pa_\nu \langle   i (A^\ep)^\mu\pa_\mu\phi , \phi\rangle -   \langle   i \pa_\nu(A^\ep)^\mu \pa_\mu\phi , \phi\rangle -\pa_\mu\langle   i (A^\ep)^\mu \pa_\nu\phi , \phi\rangle+\langle   i (A^\ep)^\mu \pa_\nu\phi , \pa_\mu\phi\rangle \\ 
	& =\pa_\nu \langle   i (A^\ep)^\mu\pa_\mu\phi , \phi\rangle -   \langle   i \pa_\nu(A^\ep)^\mu \pa_\mu\phi , \phi\rangle -\pa_\mu\langle   i (A^\ep)^\mu \pa_\nu\phi , \phi\rangle-\langle   i (A^\ep)^\mu \pa_\mu\phi , \pa_\nu\phi\rangle,
\end{align*}
which implies that 
\begin{align}
	\label{eq:phimu}
	&\langle   2i (A^\ep)^\mu\pa_\mu\phi , \pa_\nu\phi\rangle   =\pa_\nu \langle   i (A^\ep)^\mu\pa_\mu\phi , \phi\rangle -   \langle   i \pa_\nu(A^\ep)^\mu \pa_\mu\phi , \phi\rangle -\pa_\mu\langle   i (A^\ep)^\mu \pa_\nu\phi , \phi\rangle.
\end{align}
We therefore can derive that 
\begin{align*}
	\Pi_k(t)-\Pi_k(0) = -\int_0^t \int_{\mathbb{R}^3} \frac{1}{2} \pa_k c|\phi|^2    +   \langle   i \pa_k(A^\ep)^\mu \pa_\mu\phi , \phi\rangle +\pa_t\langle   i (A^\ep)^0 \pa_k\phi , \phi\rangle dx ds.
\end{align*}
For the third term, by using the assumption \eqref{eq:cond4:A} and the estimate \eqref{eq:tildeA:bd} on the connection field $A^\ep$, we can bound that 
\begin{align*}
	&\left|\int_0^t \int_{\mathbb{R}^3}  \pa_t\langle   i (A^\ep)^0 \pa_k\phi , \phi\rangle dx ds\right| \\ 
	&\les  \int_{\mathbb{R}^3}     |(\tilde{A}^\ep)^0(t, x)||\nabla\phi(t, x)| |\phi(t, x)|+\ep^2 |\nabla\phi(0, x)| |\phi(0, x)|  dx \\ 
	&\les \ep^2+\int_{\mathbb{R}^3}\ep^2 \sqrt{1+|x-\xi(t)|} (f_\om(z)(|v|+|\nabla \phi_S|)+|v||\nabla \phi_S| ) +\ep  |v|(|\nabla v|+|v|) dx \\ 
	&\les \ep^2+\ep \|v\|_{H^1}^2. 
\end{align*}
For the first term, recall that $c=-(A^\ep)^\mu (A^\ep)_{\mu}$. We bound that 
\[
|\pa_\mu c|\les |\pa_\mu A^\ep||A^\ep|\les \ep^4 (1+|x_1|+|x_2|)+\ep^2|\tilde{A}^\ep|\les \ep^4 (|x_1|+|x_2|)+\ep^{3}.
\]
By using the weighted mass estimate \eqref{eq:weighted:phi:ex}, we can show that for $t\leq T/\ep$
\begin{equation}
	\label{eq:cphi2}
	\begin{split}
		\left|\int_0^t \int_{\mathbb{R}^3}   \pa_\mu c|\phi|^2    dx ds\right|&\les t \ep^{3}+\ep^4  \int_0^t \int_{\mathbb{R}^3}    (|x_1|+|x_2|)|\phi|^2    dx ds\\ 
		&\les  \ep^2+\ep^4  \int_0^t \int_{|x|\leq s+R_0}  (s+R_0)  |\phi|^2    dx ds+\ep^4  \int_0^t \int_{\Sigma_{s, R_0}}   |x|  |\phi|^2    dx ds\\
		&\les \ep^2+\ep^4 t^2+\ep^4\int_0^t \ep ds \\ 
		&\les \ep^2.
	\end{split}
\end{equation}
Here we may note that $R_0$ relies only on $\la_0$ and we used the fact that 
\begin{align*}
	\int_{\mathbb{R}^3}|\phi|^2dx\les \int_{\mathbb{R}^3}|\phi_S|^2+|v|^2 dx\les 1
\end{align*} 
in view of the bootstrap assumption \eqref{bawv} and the decomposition \eqref{eq:decomp}. Next since
\[
|\pa A^\ep |\les \ep^2,
\]
expand the second term around the   soliton. We can show that 
\begin{align}
	\notag
	& \left|\int_0^t \int_{\mathbb{R}^3}    \langle   i \pa_\nu (A^\ep)^\mu \pa_\mu\phi , \phi\rangle - \langle   i \pa_\nu(A^\ep)^0  \psi_S , \phi_S\rangle-\langle   i \pa_\nu(A^\ep)^k \pa_k\phi_S , \phi_S\rangle dx ds\right|\\ 
	\label{eq:pamuphi}
	& \les \ep^2 \int_0^t \|v\|_{H^1}+\|w\|_{L^2} ds.
\end{align}
Combining the above estimates, we have shown that 
\begin{align*}
	&\left|\Pi_k(t)-\Pi_k(0)+ \int_0^t \int_{\mathbb{R}^3}    \langle   i \pa_k(A^\ep)^\mu \pa_\mu\phi_S , \phi_S\rangle   dx ds\right|\\ 
	& \les \ep^2+\ep^2 \int_0^t \|v\|_{H^1}+\|w\|_{L^2} ds+\ep \|v\|_{H^1}^2
\end{align*}
for $0\leq t\leq T/\ep$. Here $\pa_0\phi_S$ should be understood as $\psi_S$.  

For the standard energy $\Pi_0(t)$, taking derivative and using the  identity \eqref{eq:phimu}, we obtain that 
\begin{align*}
	\pa_t\Pi_0(t)&= \int_{\mathbb{R}^3} \l \pa_t\pa_t\phi, \pa_t\phi \r + \l \pa_t\nabla\phi, \nabla\phi \r+m^2\l \pa_t\phi,\phi \r - |\phi|^{p-1}\l \pa_t\phi, \phi\r dx\\ 
	&= \int_{\mathbb{R}^3} \l \pa_{tt} \phi-\Delta\phi+m^2\phi-|\phi|^{p-1}\phi, \pa_t\phi \r   dx\\
	&=\int_{\mathbb{R}^3} \l  2i (A^\ep)^\mu\pa_\mu\phi+c\phi , \pa_t\phi \r   dx\\ 
	&= \int_{\mathbb{R}^3} \frac{1}{2}\pa_t (c|\phi|^2)-\frac{1}{2}\pa_t c|\phi|^2+ \pa_t \langle   i (A^\ep)^k\pa_k\phi , \phi\rangle -   \langle   i \pa_t (A^\ep)^\mu \pa_\mu\phi , \phi\rangle   dx.
\end{align*}
This indicates that 
\begin{align*}
	\Pi_0(t)-\Pi_0(0)=  \int_0^t\int_{\mathbb{R}^3}   \pa_t (\frac{1}{2}c|\phi|^2+\langle   i (A^\ep)^k\pa_k\phi , \phi\rangle )- \frac{1}{2}\pa_t c|\phi|^2-  \langle   i \pa_t (A^\ep)^\mu \pa_\mu\phi , \phi\rangle   dxds.
\end{align*}
By the assumptions on the connection field $A^\ep$ and the weighted mass bound \eqref{eq:weighted:phi:ex}, we can estimate that 
\begin{align*}
	\int_{\mathbb{R}^3}    \frac{1}{2}|c||\phi|^2   dx  
	&\les \int_{\mathbb{R}^3}    \ep^2(1+\ep^2|x_1|^2+\ep^2 |x_2|^2) |\phi|^2  dx\\ 
	&\les \ep^2+ \int_{ |x|\leq t+R_0}    \ep^4( t+R_0)^2 |\phi|^2dx+ \int_{ \Sigma_{t, R_0}}    \ep^4|x|^2 |\phi|^2dx \\ 
	&\les \ep^2.
\end{align*}
Similarly we can estimate that   
\begin{align*}
	\left|\int_{\mathbb{R}^3}    \langle   i (A^\ep)^k\pa_k\phi , \phi\rangle  dx\right|  
	&\les   \int_{\mathbb{R}^3}    |\tilde{A}^\ep| |\nabla\phi | |\phi| dx+\ep^2|\int_{ |x|\leq t+R_0}    \langle   i  \Omega_{12}\phi , \phi\rangle  dx| +\ep^2\int_{ \Sigma_{t, R_0}}   |x| |  \nabla\phi| | \phi|   dx.  
\end{align*}
Here recall that $\Omega_{12}=x_1\pa_2-x_2\pa_1$. For the first term on the right hand side, we rely on the decomposition of the solution and the estimate \eqref{eq:tildeA:bd} for the connection field $A^\ep$. We can estimate that 
\begin{align*}
	\int_{\mathbb{R}^3}    |\tilde{A}^\ep| |\nabla\phi | |\phi| dx&\les \int_{\mathbb{R}^3}     \ep^2\sqrt{1+|x-\xi(t)|}(|v||\nabla \phi_S|+f_\om(z)(|\nabla \phi_S|+|v|))+ \ep  |v|(|\nabla v|+|v|) dx \\ 
	&\les \ep^2 +\ep \|v\|_{H^1}^2.
\end{align*}
For the second term, using the decomposition \eqref{eq:decomp}, we can further show that 
\begin{align*}
	&\left|\int_{ |x|\leq t+R_0}    \langle   i  \Omega_{12}\phi , \phi\rangle  dx\right| \\ 
	&\les    \left|\int_{ |x|\leq t+R_0}    \langle   i  \Omega_{12}\phi_S , \phi_S\rangle dx\right|+ (t+R_0)\int_{ |x|\leq t+R_0}   (| \nabla\phi_S|+|\phi_S|)(|v|+|\nabla v|)  dx \\ 
	&\les  \int_{ |x|\leq t+R_0} |x_1\pa_2\Theta-x_2\pa_1\Theta | |f_{\omega}(z)|^2     dx+ \ep^{-1} \|v\|_{H^1}\\ 
	&\les \ep^{-1}(|u|+\|v\|_{H^1}).
\end{align*}
Here we used the fact 
\begin{align*}
	\pa_j\Th=-\om (u+u_0)\cdot \pa_j z=-\om (u+u_0)(\pa_j x+(\rho-1)\frac{(u+u_0)\cdot \pa_j x}{|u+u_0|^2}(u+u_0))=-\omega (u+u_0)_j \rho
\end{align*}
and $u_0$ is parallel to $(0, 0, 1)$. The bound for the third term follows from the weighted mass bound \eqref{eq:weighted:phi:ex}. More precisely, we can show that 
\begin{align*}
	\int_{ \Sigma_{t, R_0}}   |x| |  \nabla\phi| | \phi|   dx\leq \left(\int_{ \Sigma_{t, R_0}}   |x|^2  | \phi|^2   dx\right)^{\frac{1}{2}} \|\nabla \phi \|_{L^2}\les 1.
\end{align*}
Here in view of the  bootstrap assumption \eqref{bawv} we have the bound   
\begin{align*}
	&\|\nabla \phi\|_{L^2}\les \|\nabla \phi_S\|_{L^2}+\|v\|_{H^1}\les 1. 
\end{align*} 
We therefore have shown that 
\begin{align*}
	\left|\int_{\mathbb{R}^3}    \langle   i (A^\ep)^k\pa_k\phi , \phi\rangle  dx\right|
	&\les \ep^2+ \ep (|u|+\|v\|_{H^1})+\ep \|v\|_{H^1}^2.
\end{align*}
Now combining all the above estimates and in view of the bounds \eqref{eq:cphi2} and \eqref{eq:pamuphi} for the bulk terms in the expression for $\Pi_0(t)-\Pi_0(0)$, 
we can show  that for all $0\leq t\leq T/\ep$ 
\begin{align*}
	&\left|\Pi_0(t)-\Pi_0(0)+\int_0^t \int_{\mathbb{R}^3}    \langle   i \pa_t(A^\ep)^\mu \pa_\mu\phi_S , \phi_S\rangle dx ds\right| \\ 
	&\les \ep^2+  \ep^2 \int_0^t\|v\|_{H^1}+\|w\|_{L^2}ds+ \ep(|u(t)|+\|v(t, x)\|_{H^1})+\ep \|v\|_{H^1}^2.
\end{align*}
We hence finished the proof for the proposition. 
\end{proof}
%%%%%%%%%%%%%%%%
\subsubsection{Improving the Bootstrap Assumptions on the Remainder Terms}\label{sec:X:zth}
To derive energy for the remainder terms $(v, w)$, consider the combination 
\[
\Pi_0(t)+(u_k(0)+(u_0)_k)\cdot \Pi_k(t)-\frac{\om}{\rho(0)}Q(t).
\]
In view of the  decompositions \eqref{Pidecomp}, \eqref{Qdecomp}, \eqref{Hdecomp}, the soliton part is 
\[
\rho G (1-(u(0)+u_0)\cdot (u+u_0))-\frac{\om^2}{\rho(0)}\|f_\om\|_{L^2(\mathbb{R}^3)}^2.
\]
For the quadratic part in terms of the remainder terms $(v, w)$, we compute that 
\begin{align*}
	&    \f12\int_{\mathbb{R}^3}m^2 |v|^2+|\nabla (\e^{i\Th}v)|^2 +|w|^2-f_\om^{p-1}|v|^2 -(p-1)f_\om^{p-1}v_1^2+2\frac{\om}{\rho}\langle iw, v\rangle \\
	&\quad +2\langle\e^{i\Th}w, \pa_k(\e^{i\Th}v)\rangle(u +u_0 )_k dx\\ 
	= &\frac{1}{2\rho}(\|w+\rho (u+u_0)\nabla_z v-i\rho \om v\|_{L^2(dz)}^2+\langle v_1, L_{+}v_1\rangle_{dz}+\langle v_2, L_{-}v_2\rangle_{dz}) \\ 
	=&\frac{1}{2\rho}E(v, w, \la),
\end{align*}
which can be viewed as the energy for $(v, w)$. Here the linearized operators   $L_{+}$, $L_{-}$ are defined in \eqref{L+}. In view of Proposition \ref{positen}, we see that under the orthogonality condition \eqref{orthcond}, $E(v, w, \la)$ is equivalent to $\|v\|_{H^1}^2+\|w\|_{L^2}^2$. We hence can write that
\begin{equation*}
	\begin{split}
		&\Pi_0(t)+(u^k(0)+u_0^k)\cdot \Pi_k(t)-\frac{\om}{\rho(0)}Q(t)\\ 
		=&\rho G(1-(u(0)+u_0)\cdot
		(u+u_0))-\frac{\om^2}{\rho(0)}\|f_\om\|_{L^2}^2+\frac{1}{2\rho}E(v, w, \la)+O(  \|v\|_{H^1}^3 ). 
	\end{split}
\end{equation*}
Denote
\begin{equation}
	\label{defdeltaH}
	\begin{split}
		d\mathcal{H}(t)&=\rho G(1-(u(0)+u_0)\cdot (u+u_0))
		-\frac{\om^2}{\rho(0)}\|f_\om\|_{L^2}^2-\frac{G(0)}{\rho(0)}
		+\frac{\om(0)\om}{\rho(0)}\|f_{\om}\|_{L^2}^2(0)
	\end{split}
\end{equation}
and
\begin{equation*}
	\begin{split}
		Err_0=
		\Pi_0(0)+(u^{k}(0)+u_0^k)\Pi_k(0)-\frac{G(0)}{\rho(0)}-\frac{\om}{\rho(0)}\left(Q(0)-\om(0)\|f_{\om}\|_{L^2}^2(0)\right).
	\end{split}
\end{equation*}
We then have 
\begin{equation}
	\label{energycomb1}
	\begin{split}
		\frac{1}{2\rho}E(v, w, \la)+d \mathcal{H}(t) =&\Pi_0(t)-\Pi_0(0)+(u^k(0)+u_0^k)\cdot (\Pi_k(t)-\Pi_k(0))\\
		&-\frac{\om}{\rho(0)}(Q(t)-Q(0))+O( \|v\|_{H^1}^3 )+Err_0.
	\end{split}
\end{equation}
By definition, initially we have 
\[
\rho(0)G(0)-|u(0)+u_0|^2\rho(0)G(0)-\frac{G(0)}{\rho(0)}=0.
\]
In view of Lemma \ref{datapre}, the initial data verify the bound 
\begin{equation}
	\label{err3est}
	|Err_0|\les \ep^2.
\end{equation}
Now we show that $d \mathcal{H}(t)$ is also non-negative. 
\begin{lemma}
	\label{lempositdH}
	Let $\la(t)\in \La_{\delta_0}$. If $\delta_0$ is sufficiently small, depending only $\la(0)$, then
	\[
	d \mathcal{H}(t)\geq c_3(|\om(t)-\om(0)|^2+|u(t)-u(0)|^2)
	\]
	for some positive constant $c_3$ depending only on $\la(0)$.
\end{lemma}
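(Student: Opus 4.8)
The plan is to read $d\mathcal{H}(t)$, for fixed data $(\om(0),u(0))$, as a smooth function of the running modulation parameters alone. Write $v=u(t)+u_0$, $v_*=u(0)+u_0$, $\om_*=\om(0)$, $\rho_*=\rho(0)=(1-|v_*|^2)^{-\f12}$, and $h(\om)=\|f_\om\|_{L^2}^2$, $g(\om)=\|\nabla f_\om\|_{L^2}^2$, so that $G=\om^2h+\tfrac13 g$ by \eqref{defofB}. Then \eqref{defdeltaH} reads $d\mathcal{H}(t)=F(\om,v)$ with
\[
F(\om,v):=\rho(v)\,G(\om)\,(1-v_*\cdot v)-\frac{\om^2}{\rho_*}h(\om)+\frac{\om_*\om}{\rho_*}h(\om_*)-\frac{G(\om_*)}{\rho_*},
\]
and, as noted just after \eqref{err3est}, $F(\om_*,v_*)=0$. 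The lemma follows once we show that $(\om_*,v_*)$ is a nondegenerate critical point of $F$ with a Hessian lower bound uniform over $\La_{\delta_0}$; Taylor's theorem then converts this into the claimed quadratic lower bound.

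First I would check $\nabla_{(\om,v)}F(\om_*,v_*)=0$. Using $\pa_{v_i}\rho=v_i\rho^3$ and $1-|v_*|^2=\rho_*^{-2}$, the $v$-gradient at the base point is $\rho_*G(\om_*)v_*-\rho_*G(\om_*)v_*=0$: this vanishing is exactly what the choice of the coefficient $v_*$ in front of the $\Pi_k$'s was designed for. For the $\om$-derivative, after cancelling the explicit algebraic terms one is reduced to $G'(\om_*)-\om_*h(\om_*)-\om_*^2h'(\om_*)=0$. Inserting the scaling and energy identities of Proposition \ref{prop:ground:f} (items~3--4), which give $g(\om)=\tfrac{3(p-1)}{5-p}(m^2-\om^2)h(\om)$ and $h(\om)=c_h(m^2-\om^2)^{\frac{7-3p}{2(p-1)}}$ for a positive constant $c_h$, this reduces to the algebraic identity $2\cdot\frac{p-1}{5-p}\cdot\frac{5-p}{2(p-1)}=1$ and so holds automatically.

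Next I would compute the Hessian at $(\om_*,v_*)$. The mixed $\om$--$v$ entries vanish (again via $\pa_{v_i}\rho\big|_*\,\rho_*^{-2}=\rho_*(v_*)_i$), so $\nabla^2F$ is block diagonal. The $v$-block equals $G(\om_*)\rho_*\bigl(I+\rho_*^2\,v_*\otimes v_*\bigr)$, which is positive definite with least eigenvalue $\ge G(\om_*)\rho_*>0$ (note $G>0$ since $h,g>0$ and $\om_*\neq0$ on $\La$). The scalar $\om$-block simplifies, because the $\om^2h$ contribution to $G''$ is exactly cancelled by $\pa_\om^2(\om^2h)$, to $\tfrac1{3\rho_*}g''(\om_*)$, and differentiating $g(\om)=c_g(m^2-\om^2)^{\gamma}$ with $\gamma=\tfrac{5-p}{2(p-1)}$ yields
\[
g''(\om)=-2c_g\gamma\,(m^2-\om^2)^{\gamma-2}\Bigl(m^2-\tfrac{6-2p}{p-1}\om^2\Bigr),
\]
which is strictly positive precisely because $\la\in\La$ forces $\tfrac{\om^2}{m^2}>\tfrac{p-1}{6-2p}$. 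Hence $\nabla^2F(\om_*,v_*)\ge 2c_0 I$ for some $c_0=c_0(\la(0))>0$. This positivity of the $\om\om$-entry is the one genuine point: it is the strict convexity of the soliton energy in $\om$, i.e. the defining stability restriction on $\La$, and it is the only place where that hypothesis is used; everything else is bookkeeping on the energy identities.

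Finally, since $\nabla^2F$ is continuous and $|\om_*-\om_0|+|u(0)|\les\ep$, after possibly shrinking $\delta_0$ (depending only on $\la(0)$) we may assume $\nabla^2F\ge c_0 I$ on the region $\{|\om-\om_0|+|u|<\delta_0\}$ underlying $\La_{\delta_0}$. This region is convex and contains both $(\om_*,v_*)$ and $(\om(t),v(t))$, so Taylor's formula with integral remainder, using $F(\om_*,v_*)=0$ and $\nabla F(\om_*,v_*)=0$, gives
\[
d\mathcal{H}(t)=F(\om,v)\ge\frac{c_0}{2}\bigl(|\om-\om_*|^2+|v-v_*|^2\bigr)=\frac{c_0}{2}\bigl(|\om(t)-\om(0)|^2+|u(t)-u(0)|^2\bigr),
\]
which is the assertion with $c_3=c_0/2$. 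The main obstacle is purely the verification that the $\om\om$-Hessian entry $\tfrac1{3\rho_*}g''(\om_*)$ is positive; once the defining inequality of $\La$ is invoked this is immediate, and the rest is the routine first- and second-variation computation outlined above.
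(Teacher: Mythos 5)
Your proof is correct, and the core calculation coincides with the paper's: both verify that $(\om(0),u(0))$ is a critical point of $d\mathcal{H}$ viewed as a function of the running parameters, and both reduce positivity of the $\om$-Hessian to the sign of $(m^2-\frac{6-2p}{p-1}\om^2)$, i.e.\ the defining stability inequality of $\Lambda$. Where you differ is in the organization. The paper never needs the vanishing of the mixed $\om$--$u$ Hessian entry: it factors the quadratic term as $\rho(1-v_*\cdot v)\,G(\om)$, proves the one-variable lower bound $\rho(1-v_*\cdot v)\ge\rho_*^{-1}+c_1|u-u(0)|^2$, multiplies through by $G(\om)>0$, and then handles the remaining $\om$-only expression $\frac{1}{3\rho_*}g(\om)-\rho_*^{-1}G(0)+\rho_*^{-1}\om_*\om\,h(\om_*)$ by a separate one-variable convexity estimate. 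You instead treat $d\mathcal{H}=F(\om,u)$ jointly, observe that $\pa_\om\pa_uF$ vanishes at the base point so the Hessian is block-diagonal there, and conclude by Taylor after shrinking $\delta_0$ so that $\nabla^2F\ge c_0 I$ persists on a convex neighborhood. Both routes are equally rigorous; the paper's product decomposition avoids having to discuss the behavior of the mixed second derivatives away from the base point, while your version is arguably more transparent about what is being used (one nondegenerate critical point and uniform strict convexity), at the cost of the extra continuity step for the Hessian.
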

\begin{proof}
	Recall that 
	\[
	\rho=(1-|u+u_0|^2)^{-\frac{1}{2}}. 
	\]
	As functions of the variable $u$, we  compute that 
	\begin{align*}
		\pa_u (\rho(1-(u(0)+u_0)\cdot (u+u_0)))(0)=&\pa_u\rho(0)\rho(0)^{-2}-\rho(0) (u(0)+u_0)=0,\\
		\pa^2_u (\rho(1-(u(0)+u_0)\cdot (u+u_0)))(0)=&\pa_u^2\rho(0)
		(1-|u(0)+u_0|^2)-2 \pa_u\rho(0) \cdot(
		u(0)+u_0)\\
		=&\rho(0)I+\rho(0)^{3}(u(0)+u_0)\cdot (u(0)+u_0).
	\end{align*}
	In particular the Hessian is positive definite. In view of the bootstrap assumption \eqref{bawv}, for sufficiently small $\delta_0$, depending only on $\la(0)$, there exists a constant $c_1$, depending only on $u(0)+u_0$, such that 
	\[
	\rho(1-(u(0)+u_0)\cdot (u+u_0))\geq \rho(0)^{-1}+c_1|u(t)-u(0)|^2,\quad |u(t)-u(0)|\leq \delta_0. 
	\]
	On the other hand, recall the definition of $G(\omega)$ in \eqref{defofB}, which relies only on $\omega$. In view of the  scaling property of the ground state in Proposition \ref{prop:ground:f}, we can show that
	\begin{align*}
		& \frac{d}{d\omega}\left(G(t)-\om^2\|f_\om\|_{L^2}^2-G(0)+\om(0)\om\|f_\om\|_{L^2}^2(0)\right)|_{t=0}=0,\\
		& \frac{d^2}{d\omega^2}\left(G(t)-\om^2\|f_\om\|_{L^2}^2-G(0)+\om(0)\om\|f_\om\|_{L^2}^2(0)\right)|_{t=0}\\
		&\quad=\left(\frac{6-2p}{p-1}\om(0)^2-m^2\right)(m^2-\om(0)^2)^{\frac{9-5p}{2(p-1)}}\|f\|_{L^2}^2, 
	\end{align*}
	which is positive due to the assumption $\la(0)\in \La_{\delta_0}$. Here $G(t)=G(\omega(t))$. Therefore  if $\delta_0$ is sufficiently small, we have
	\[
	G(t)-\om^2\|f_\om\|_{L^2}^2-G(0)+\om(0)\om\|f_\om\|_{L^2}^2(0)\geq c_2|\om(t) -\om(0)|^2,\quad \forall |\om(t)-\om_0| <\delta_0 
	\]
	for some constant $c_2>0$ depending only on $\la(0)$. The above convexity then implies that  
	\begin{align*}
		d\mathcal{H}(t)&\geq \rho(0)^{-1}G(t)+c_1|u(t)-u(0)|^2G(t)-\rho(0)^{-1}(\om^2\|f_\om\|_{L^2}^2+G(0)-\om(0)\om\|f_{\om}\|_{L^2}^2(0))\\
		&\geq c_1|u(t)-u(0)|^2G(t)+c_2\rho(0)^{-1}|\om(t)-\om(0)|^2\geq c_3(|\om(t)-\om(0)|^2+|u(t)-u(0)|^2)
	\end{align*}
	for some constant $c_3$ depending only on $\la(0)$ if $\delta_0$ is sufficiently small.
\end{proof}
Since $(v, w)$ satisfies the orthogonality condition \eqref{orthcond}, Proposition \ref{positen} implies that $E(v, w, \la)$ is equivalent to the energy $\|v\|_{H^1(\mathbb{R}^3)}^2+\|w\|_{L^2(\mathbb{R}^3)}^2$ when $\ep$ is sufficiently small. The above lemma then implies that the left hand side of  \eqref{energycomb1} is positive and has a lower bound 
\[
\frac{1}{2\rho}E(v, w, \la)+d\mathcal{H}\geq c_4(\|v\|_{H^1}(t)+\|w\|_{L^2}(t)+|\om(t)-\om(0)|^2+|u(t)-u(0)|^2)
\]
for some constant $c_4$ depending only on $\la(0)$ for sufficiently small  $\ep>0$. Now for the right hand side of the energy identity \eqref{energycomb1}, Proposition \ref{propcon} shows that the energies $\Pi_\mu(t)$ is not almost conserved due to the fact that the soliton part is of order $\ep^2 t$. The key observation is that the soliton travels along the direction $(1, u_0)$ in the Minkowski space $\mathbb{R}^{1+3}$ and hence $(\pa_t+u_0\nabla)\phi_S$ is of order $\ep$ instead of $1$. Let $X$ be the vector field 
\[
X=\pa_t+u_0^k\pa_k
\]
in the Minkowski space $\mathbb{R}^{1+3}$. Integration by parts, we note that 
\begin{align*}
	& |\int_0^t \int_{\mathbb{R}^3}    \langle   i \pa_t(A^\ep)^\mu \pa_\mu\phi_S , \phi_S\rangle dx ds+ u_0^k \int_0^t \int_{\mathbb{R}^3}    \langle   i \pa_k(A^\ep)^\mu \pa_\mu\phi_S , \phi_S\rangle   dx ds| \\
	& \les |  \int_{\mathbb{R}^3}    \langle   i (A^\ep)^\mu \pa_\mu\phi_S , \phi_S\rangle dx|_{0}^t  |+\int_0^t \int_{\mathbb{R}^3}     | (A^\ep)^\mu| | X\l i\pa_\mu\phi_S  , \phi_S\r  | dx ds. 
\end{align*}
By using the assumption \eqref{eq:cond4:A} on the connection field $A^\ep$ and the bound \eqref{eq:Omphis}, we can show that 
\begin{align*}
	|\int_{\mathbb{R}^3}    \langle   i (A^\ep)^\mu \pa_\mu\phi_S , \phi_S\rangle dx| &\les \ep^2  \int_{\mathbb{R}^3}      \sqrt{1+|x-\xi(t)|}(|\psi_S|+|\nabla \phi_S|) | \phi_S|  + |\l i\Omega_{12}\phi_S, \phi_S \r |dx \\ 
	&\les \ep^2 + \ep^2\int_{\mathbb{R}^3}        |x_1 \pa_2\Theta-x_2\pa_1\Theta| |f_{\omega}(z)|^2dx \\ 
	& \les \ep^2 + \ep^2\int_{\mathbb{R}^3}         (|z|+|\eta|)|u| |f_{\omega}(z)|^2dx \\
	&\les \ep^2+\ep^2 C_2 \delta_1 \les \ep^2.
\end{align*}
The last step follows from the bootstrap assumptions \eqref{baxi}, \eqref{bawv} and the assumption $C_2\delta_1\leq 1$. Next we compute that 
\begin{align*}
	\pa_t z &= -\dot \xi -(\rho-1)P_{u+u_0} \dot \xi +\dot{u}\pa_u( (\rho-1)P_{u+u_0}) (x-\xi)\\ 
	&=-\dot \eta -u_0 -(\rho-1)P_{u+u_0}  ( \dot \eta+u_0) +\dot{u}\pa_u( (\rho-1)P_{u+u_0}) (x-\xi),\\ 
	\pa_k z&=\pa_k x+(\rho-1)P_{u+u_0}\pa_k x.  
\end{align*}
In particular for $\la\in \Lambda_{\delta_0}$ and in view of Corollary \ref{corcontrga}, we have 
\begin{align*}
	|X z|=|(\pa_t+ u_0^k\pa_k)( z)|\les |\dot \eta|+|\dot{u}||z|\les |\dot{\gamma}|(1+|z|)+|u| \les |u|+(1+|z|)(C_2\ep^2+\|v\|_{H^1}^2).
\end{align*}
Now we can compute that 
\begin{align*}
	&\l i \psi_S, \phi_S\r= \l i(  i\rho\om
	f_{\om}  -\rho (u+u_0)\cdot \nabla_z f_\om ) , f_\om \r =-\rho\om f_\om^2, \\ 
	&\l i\pa_k\phi_S, \phi_S\r =-\pa_k\Th f_\om^2=\rho (u+u_0)_k  f_\om^2.
\end{align*}
By using the bound \eqref{eq:Omphis}, we therefore can estimate that 
\begin{align*}
	&\int_0^t \int_{\mathbb{R}^3}     | (A^\ep)^\mu| |X \l i\pa_\mu\phi_S, \phi_S\r  |  dx ds \\ 
	&\les \ep^2 \int_0^t \int_{\mathbb{R}^3}     \sqrt{1+|z|} (|X(\rho f_\om^2)|+ (|X\om|+|Xu|)f_\om^2 ) dx ds \\ 
	&\quad +\ep^2 \int_0^t \int_{\mathbb{R}^3}     |x_1 X(\rho (u+u_0)_2 f_\om^2)-x_2 X(\rho(u+u_0)_1f_\om^2) |dx ds \\ 
	&\les \ep^2 \int_0^t C_2\ep^2+\|v\|_{H^1}^2+|u|  dt   +\ep^2 \int_0^t C_2|\dot u|+   (C_2\ep^2+\|v\|_{H^1}^2+|u|+|\dot u|)C_2|u|  dt   \\ 
	&\les \ep^3 C_2^2+ \ep^2 C_2^2\int_0^t \|v\|_{H^1}^2  dt+\ep^2 \int_0^t |u|dt\\ 
	&\les \ep^2+\ep \int_0^t \|v\|_{H^1}^2+\ep |u| dt. 
\end{align*}
Here we used the estimate \eqref{eq:tildeA:bd} for the connection field $\tilde{A}$ and the bootstrap assumption \eqref{baxi}. We also note that the constant $C_2$ verifies the bound  $C_2^4 \ep\leq 1$ and the vector field $u_0$ is parallel to $(0, 0, 1)$. In particular we have shown that 
\begin{align*}
	& |\int_0^t \int_{\mathbb{R}^3}    \langle   i \pa_t(A^\ep)^\mu \pa_\mu\phi_S , \phi_S\rangle dx ds+(u^k(0)+u_0^k)\int_0^t \int_{\mathbb{R}^3}    \langle   i \pa_k(A^\ep)^\mu \pa_\mu\phi_S , \phi_S\rangle   dx ds| \\
	& \les  \ep^2+\ep \int_0^t \|v\|_{H^1}^2 +\ep |u|dt. 
\end{align*}
Here we may note that initially $|u(0)|\les \ep$. Thus in view of Proposition \ref{propcon}, for $\la\in \La_{\delta_0}$, we can bound that 
\begin{align*}
	&|\Pi_0(t)-\Pi_0(0)+(u^k(0)+u_0^k)\cdot (\Pi_k(t)-\Pi_k(0)) -\frac{\om}{\rho(0)}(Q(t)-Q(0))|\\ 
	&\les \ep^2+\ep \int_0^t \|v\|_{H^1}^2 dt+  \ep^2 \int_0^t\|v\|_{H^1}+\|w\|_{L^2}+|u|ds  + \ep(|u(t)|+\|v(t, x)\|_{H^1})+\ep^{\frac{3}{2}}\|v\|_{L^2}^2.
\end{align*}
The energy identity \eqref{energycomb1} together with the bound \eqref{err3est} and Proposition \ref{positen}, Lemma \ref{lempositdH} leads to the energy estimate 
\begin{align*}
	&|u(t)-u(0)|^2+|\omega(t)-\omega(0)|^2+\|v\|_{H^1}^2+\|w\|_{L^2}^2 \\ 
	&\leq C\left(  \ep^2+(\ep^{\frac{3}{2}}+\delta_1)\|v\|_{H^1}^2+ \ep \int_0^t \|v\|_{H^1}^2 dt+ \ep^2 \int_0^t\|v\|_{H^1}+\|w\|_{L^2}+|u|ds  + \ep(|u |+\|v \|_{H^1}) \right) 
\end{align*}
for some constant $C$ depending only on $p$, $\la_0$, $m$ and $\delta_0$. Chose $\delta_1$ such that 
\[
\delta_1=\frac{1}{4C}.
\]
For sufficiently small $\ep$, the terms $\ep^{\frac{3}{2}}\|v\|_{H^1}^2$ and $O(\|v\|_{H^1}^3)$ on the right hand side can be absorbed. Notice that 
\[
|u(0)+u_0-u_0|\les \ep
\]
in view of Lemma \ref{datapre}. A simple bootstrap argument and Gronwall's inequality then imply that 
\begin{align*}
	&|u(t)-u(0)|+|\omega(t)-\omega(0)|+\|v\|_{H^1}+\|w\|_{L^2}  \leq C_3 \ep
\end{align*}
for some constant $C_3$ depending only on $\la_0$, $m$, $p$, $T$ and $\delta_0$.  
If 
$$ \ep<\min\{\delta_1 C_3^{-1}=\frac{1}{4CC_3},\quad \frac{1}{2}\delta_0 C_3^{-1} \},$$
we then can improve the bootstrap assumption \eqref{bawv}. From Corollary \ref{corcontrga}, we conclude that 
\[
|\dot{\gamma}|\les C_2\ep^2+\ep^2\les C_2\ep^2. 
\]
By definition, recall that 
\[
\xi(t)=\eta(t)+u_0t,\quad |\dot{\xi}(t)-u(t)-u_0|\les |\dot{\gamma}|\les C_2\ep^2.
\]
Therefore for $t\leq T/\ep$, we have 
\begin{align*}
	|\eta(t)|\les |\eta(0)|+\int_0^t |\dot{\eta}(s)|ds\les 1 +\int_0^t C_2\ep^2 +|u(s)|ds\les 1+C_2\ep^2 t+\ep t\les 1+C_2\ep\leq C_2 
\end{align*}
for sufficiently small $\ep$, depending only on $\la_0$, $m$, $p$, $T$ and $\delta_0$. This in particular improves the bootstrap assumption \eqref{baxi}. We remark here that the constant $C_2$ can be chosen to be $C_2=\delta_1^{-1}$. We hence finished the proof for Proposition \ref{prop:fix}.
%%%%%%%%%%%%%%%%
\subsection{Higher order energy estimates for the scalar field}
This section is devoted to the proof for Proposition \ref{pro:phi:HSob}. We have shown in the previous section that stable soliton on a constant small electric magnetic field is orbital stable up to time $T/\ep$. To solve the full Maxwell-Klein-Gordon system \eqref{eq:MKG:scaled} and  improve the bootstrap assumption \eqref{eq:cond4:A} on the connection field $\tilde{A}$, we need higher order energy estimates for the scalar field $\phi$. The problem is that to derive $H^3$ estimates for the connection field $\tilde{A}^\ep$ under Lorentz gauge, we need to commute the equation with two derivatives. The key observation that allows us to prove the main theorem under the weak condition that $\delta$ is sufficiently small which is independent of $\ep$ is that the soliton travels along the timelike vector field $X$. Integration by parts gives an extra order of smallness.  This means that we may need to take fourth order derivatives on the soliton or on the modulation curve $\gamma(t)$, which, however, is not quite possible as the nonlinearity is of the form $|\phi|^{p-1}\phi$ with $p<\frac{7}{3}$. The idea, as in \cite{yang4}, is to choose a modified curve $\tilde{\la}(t)\in \La_{\delta_0}$, defined as the integral curve of $V(\la)$, that is
\begin{equation*}
	\pa_t\tilde{\la}=V(\la),\quad
	\tilde{\la}(0)=(\om_0, \th_0, 0, u(0)+u_0).
\end{equation*}
With this modified curve $\tilde{\la}(t)$,  the solution $\phi$ has the following new decomposition
\begin{equation}
	\label{eq:decomp:new} 
	\phi(t, x)=\phi_S(\tilde{\la};x)+\e^{i\Th(\tilde{\la})}\tilde{v}.
\end{equation}
We show that under this new decomposition, the remainder term $\tilde{v}$ is still of order $\ep$ in $H^1$.
\begin{lemma}
	\label{lem:tildev}
	It holds that 
	\[
	\|D\tilde{v}(t, x)\|_{L_x^2}+\| \tilde{v}(t, x)\|_{H_x^1}\les \ep,\quad \forall t\in[0, T/\ep].
	\]
\end{lemma}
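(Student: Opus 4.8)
The plan is, first, to compare the modified curve $\tilde\la(t)$ with the modulation curve $\la(t)$ from $\prop{prop:fix}$, showing $|\la(t)-\tilde\la(t)|\les\ep$ uniformly on $[0,T/\ep]$, and then to read off the bound on $\tilde v$ by subtracting the two decompositions \eqref{eq:decomp} and \eqref{eq:decomp:new}. For the comparison, the key point is that $\tilde\la$ is the integral curve of the \emph{time--dependent} field $t\mapsto V(\la(t))$, so the ``$V$--parts'' of $\dot\la$ and $\dot{\tilde\la}$ cancel exactly:
\[
\tfrac{d}{dt}\big(\la(t)-\tilde\la(t)\big)=\dot\la-V(\la)=\dot\ga(t),\qquad |\la(0)-\tilde\la(0)|\les\ep,
\]
where the initial bound comes from $\lem{datapre}$ (in particular $|\xi(0)|\les\ep$, $|\om(0)-\om_0|\les\ep$, $|\th(0)-\th_0|\les\ep$). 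Since $\|v\|_{H^1}\les\ep$ by $\prop{prop:fix}$, $\cor{corcontrga}$ gives $|\dot\ga|\les\ep^2$, hence
\[
|\la(t)-\tilde\la(t)|\les\ep+\int_0^t|\dot\ga(s)|\,ds\les\ep+\ep^2\cdot T/\ep\les\ep,\qquad \forall\, t\in[0,T/\ep],
\]
and in particular $\tilde\la(t)\in\La_{\delta_0}$ once $\ep$ is small. I stress that no Gronwall argument on the long interval $[0,T/\ep]$ enters; that is precisely why the curve is defined by integrating $V(\la(t))$ rather than as an autonomous integral curve of $V$, which would produce an uncontrollable factor $e^{cT/\ep}$.

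Subtracting \eqref{eq:decomp} from \eqref{eq:decomp:new} gives the algebraic identity
\[
\tilde v=e^{-i\Th(\tilde\la)}\big(\phi_S(x;\la)-\phi_S(x;\tilde\la)\big)+e^{i(\Th(\la)-\Th(\tilde\la))}v .
\]
For the first term, a Taylor expansion in the parameter together with the fact that $\pa_\la\phi_S(\,\cdot\,;\mu)\in H^1$ uniformly for $\mu\in\La_{\delta_0}$ (its components decay exponentially in $z$, up to polynomial factors in $x$ coming from $\pa_\om\Th$ and $\pa_u\Th$) gives $\|\phi_S(\,\cdot\,;\la)-\phi_S(\,\cdot\,;\tilde\la)\|_{H^1}\les|\la-\tilde\la|\les\ep$, and multiplication by the phase $e^{-i\Th(\tilde\la)}$, whose spatial gradient $-\om(u+u_0)\rho$ is bounded, costs only a constant. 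For the second term, $\nabla_x(\Th(\la)-\Th(\tilde\la))$ is a constant vector of size $O(|\la-\tilde\la|)=O(\ep)$, so
\[
\|e^{i(\Th(\la)-\Th(\tilde\la))}v\|_{H^1}\les\|v\|_{H^1}+\|\nabla_x(\Th(\la)-\Th(\tilde\la))\|_{L^\infty}\|v\|_{L^2}\les\|v\|_{H^1}\les\ep .
\]
This establishes $\|\tilde v\|_{H^1}\les\ep$.

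For the covariant bound, write $D\tilde v=\nabla\tilde v+iA^\ep\tilde v$ with $A^\ep=\tilde A^\ep+A^\ep_b$. The $\nabla\tilde v$ part is already controlled; $\|\tilde A^\ep\tilde v\|_{L^2}\les\|\tilde A^\ep\|_{L^\infty}\|\tilde v\|_{L^2}\les\ep^2$ by \eqref{eq:tildeA:bd}; and by \eqref{connection background} the remaining piece obeys $\|A^\ep_b\tilde v\|_{L^2}\les\ep^2\||x|\tilde v\|_{L^2}$. Using the identity above once more, $\||x|\tilde v\|_{L^2}\les\||x|v\|_{L^2}+\||x|(\phi_S(\,\cdot\,;\la)-\phi_S(\,\cdot\,;\tilde\la))\|_{L^2}\les\||x|v\|_{L^2}+\ep$, since the soliton difference still decays exponentially in $x$ (with at most a linear weight from $\Th(\la)-\Th(\tilde\la)$). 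Finally $\||x|v\|_{L^2}$ is estimated by splitting $\mathbb{R}^3$: on the interior $\{|x|\le t+R_0\}$ one has $\||x|v\|_{L^2}\les (t+R_0)\|v\|_{L^2}\les (T/\ep)\cdot\ep\les1$, while on the exterior slice $\Sigma_{t,R_0}$ the soliton $\phi_S$ is exponentially small, so $\||x|v\|_{L^2}\les\||x|\phi\|_{L^2}\les1$ by the weighted exterior energy estimate \eqref{eq:weighted:phi:ex} with $k=2$. Hence $\|A^\ep_b\tilde v\|_{L^2}\les\ep^2\les\ep$ and $\|D\tilde v\|_{L^2}\les\ep$. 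The only delicate points I anticipate are the two just highlighted: the exact cancellation that lets the $\la$--$\tilde\la$ comparison survive the time scale $T/\ep$, and the treatment of the linearly growing term $A^\ep_b\tilde v$, which is what forces us to invoke the exterior weighted energy estimate \eqref{eq:weighted:phi:ex}.
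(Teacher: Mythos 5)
Your treatment of the purely spatial pieces closely tracks the paper's proof: the same algebraic identity relating $\tilde{v}$ to $v$ and $\phi_S(\cdot;\la)-\phi_S(\cdot;\tilde{\la})$, the same comparison $|\la-\tilde{\la}|\les\ep$ (obtained from $|\dot\ga|\les\ep^2$ integrated over $[0,T/\ep]$), and the same interior/exterior split for $\|A^\ep_b\tilde{v}\|_{L^2}$ using the weighted exterior estimate \eqref{eq:weighted:phi:ex}. These parts are correct.

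There is, however, a genuine gap: you never estimate the time covariant derivative $D_0\tilde{v}=\pa_t\tilde{v}+iA^\ep_0\tilde{v}$, and the $D$ in the lemma is the full spacetime covariant derivative $D=\pa+iA^\ep$ (cf.\ the definition in Proposition~\ref{pro:phi:HSob}); the paper's own proof devotes a separate estimate to $\pa_t\tilde{v}$. Your bound $\|\tilde{v}\|_{H^1}\les\ep$ is purely spatial and does not control $\pa_t\tilde{v}$, and differentiating your identity for $\tilde{v}$ in $t$ would require $\pa_t v$, which Proposition~\ref{prop:fix} does not give directly --- what is controlled is $w=e^{-i\Th(\la)}(\pa_t\phi-\psi_S)$, which is not $\pa_t v$. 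The correct route is to use the second line of the decomposition \eqref{eq:decomp}: since $\pa_t\phi=\psi_S(\la)+e^{i\Th(\la)}w=\pa_\la\phi_S(\la)\cdot V(\la)+e^{i\Th(\la)}w$ while $\pa_t\phi_S(\tilde{\la};x)=\pa_{\tilde{\la}}\phi_S\cdot V(\la)$ (because $\pa_t\tilde{\la}=V(\la)$), one obtains
\[
e^{i\Th(\tilde{\la})}\pa_t\tilde{v}=\pa_\la\phi_S(\la;x)\cdot V(\la)+e^{i\Th(\la)}w-\pa_{\tilde{\la}}\phi_S(\tilde{\la};x)\cdot V(\la)-i\,\pa_t\Th(\tilde{\la})\,e^{i\Th(\tilde{\la})}\tilde{v},
\]
whence $\|\pa_t\tilde{v}\|_{L^2}\les|\la-\tilde{\la}|+\|w\|_{L^2}+\|\pa_t\Th(\tilde{\la})\,\tilde{v}\|_{L^2}\les\ep$. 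Without this step your argument bounds only the spatial covariant derivative, not $\|D\tilde{v}\|_{L^2}$, so the lemma as stated is not established.
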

\begin{proof}
	Note that 
	\begin{align*}
		\|v\|_{H^1}+\|w\|_{L^2}\les \ep,\quad |\dot{\gamma}|=|\pa_t\lambda-\pa_t\tilde{\lambda}|\les \ep^2. 
	\end{align*}
	By definition, we can estimate that  
	\begin{align*}
		\|\tilde{v}\|_{H^1}&=\|\e^{-i\Th(\tilde{\la})}(\phi_S(\la;x)+\e^{i\Th(\la)} v-\phi_S(\tilde{\la};x))\|_{H^1} \\
		&\les \|  \e^{i\Th(\la)-i\Th(\tilde{\la})} v \|_{H^1}+\|\e^{-i\Th(\tilde{\la})}(\phi_S(\la;x) -\phi_S(\tilde{\la};x))\|_{H^1}\\
		&\les \|v\|_{H^1}+|\la-\tilde{\la}|\les \ep. 
	\end{align*}
	Similarly for the time derivative of $\tilde{v}$, we have 
	\begin{align*}
		\|\e^{i\Th(\tilde{\la})}\pa_t
		\tilde{v}\|_{L^2}&=\|\pa_t(\phi-\phi_S(\tilde{\la};x))-i\pa_t\Th(\tilde{\la})\e^{i\Th(\tilde{\la})}\tilde{v}\|_{L^2}\\
		&=\|\pa_\la\phi_S(\la;x)\cdot V(\la)+\e^{i\Th(\la)} w-\pa_\la\phi_S(\tilde{\la};x)\cdot V(\la)-i\pa_t\Th(\tilde{\la})\e^{i\Th(\tilde{\la})}\tilde{v}\|_{L^2}\\
		&\les |\la-\tilde{\la}| +\|w\|_{L^2}+\| \pa_t\Th(\tilde{\la}) \tilde{v}\|_{L^2}\\
		&\les \ep
	\end{align*}
	for all $0\leq T/\ep$. Now since 
	\[
	|D\tilde{v}|\leq |\pa\tilde{v}|+|A^\ep \tilde{v}|,
	\]
	the lemma then follows if we can bound $\|A^\ep \tilde{v}\|_{L^2}$. In view of the estimate \eqref{eq:tildeA:bd} for the connection field $\tilde{A}^\ep$ and the weighted energy estimate \eqref{eq:weighted:phi:ex} in the exterior region, we can show that 
	\begin{align*}
		\int_{\mathbb{R}^3} |A^\ep \tilde{v}|^2 dx &\leq \int_{ |x|\leq t+R_0} |\tilde{v}|^2(\ep^2+\ep^4(x_1^2+x_2^2))dx+\int_{\Sigma_{t, R_0}} (\ep^2+\ep^4(x_1^2+x_2^2)) |\phi-\phi_S(x;\tilde{\la})|^2dx \\ 
		&\les (\ep^4 (t+R_0)^2+\ep^2)\|\tilde{v}\|_{L^2}^2+\ep^4+\int_{\Sigma_{t, R_0}} (\ep^2+\ep^4( 1+|z|^2))  |f_{\om}(z) |^2 dx\\ 
		&\les \ep^2.
	\end{align*}
	Here we used the fact that $$|x_1|+|x_2|\leq |\eta|+|z|\les 1+|z|.$$ See the bootstrap assumption \eqref{baxi} on the center of the soliton. We hence finished the proof for the Lemma. 
\end{proof}
%%%%%%%%%%%%%%%%
\subsubsection{Estimates for Higher Derivatives of $\la(t)$}
Higher order energy estimates of the scalar field involves higher order derivatives of the modulation curve. We have shown that the time derivative of the modulation curve $\gamma$ stays small of order $\ep^2$ up to time $T/\ep$. We first show that the higher order derivatives of the modulation curve are also small. 
\begin{lemma}
	\label{lem:Y:N} 
	Let $\mathcal{N}(\la)$ be defined in line \eqref{eq:def4:N}. Assume $p\geq 2$. Then for any vector field $Y_1$, $Y_2$ in the Minkowski space $\mathbb{R}^{1+3}$, it holds that 
	\begin{align*}
		|D_{Y_1}\mathcal{N}(\la)|\les &(|v|+|v|^{p-1})(|D_{Y_1}
		f_\om|+|D_{Y_1} v|+|v|),\\
		|D_{Y_1}D_{Y_2}\mathcal{N}(\la)|\les &  (1+\|v\|_{L^\infty}+\sum\limits_{j=1}^2|D_{Y_j} \ln
		f_\om| )(\sum\limits_{j=1}^2|D_{Y_j}  f_\om|^2 +|D_{Y_j}
		v|^2 +|v|^2) \\ 
		&+(|v|+|v|^{p-1})(|D_{Y_1}D_{Y_2} f_\om|+|D_{Y_1}D_{Y_2}v|).
	\end{align*}
	Here the covariant derivative $D$ is with respect to the connection field $A^\ep$. 
\end{lemma}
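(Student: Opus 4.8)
The plan is to read $\mathcal{N}(\la)$ as a second order Taylor remainder and to differentiate it. Put $F(\zeta)=|\zeta|^{p-1}\zeta$ and write $DF(f_\om)\cdot v:=f_\om^{p-1}v+(p-1)f_\om^{p-1}Re(v)$ for its real linearization at the profile; unwinding \eqref{eq:def4:N} together with $\phi=\phi_S+\e^{i\Th}v$ (so $\e^{-i\Th}\phi=f_\om+v$ and $|\phi|=|f_\om+v|$) gives
\[
\mathcal{N}(\la)=F(f_\om+v)-F(f_\om)-DF(f_\om)\cdot v.
\]
Since $p\ge 2$, $F$ is $C^{1,1}$ with $|\pa F(\zeta)|\les|\zeta|^{p-1}$ and $|\pa^2 F(\zeta)|\les|\zeta|^{p-2}$, so $\pa F$ and $\pa^2 F$ are locally bounded; $F$ is smooth off the origin with $|\pa^3 F(\zeta)|\les|\zeta|^{p-3}$ (the only possibly singular derivative, relevant for $2\le p<3$); and $F$ is phase equivariant, $F(e^{i\alpha}\zeta)=e^{i\alpha}F(\zeta)$, which yields the covariant chain rule $D_Y[F(\psi)]=\pa_\zeta F(\psi)D_Y\psi+\pa_{\bar\zeta}F(\psi)\overline{D_Y\psi}$. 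This is the differentiated analogue of \lem{lemnonlinear}, so the outcome should again be quadratic in the remainder data.

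For the first bound I would apply $D_{Y_1}$ to the displayed identity, using the covariant chain rule on $F(f_\om+v)$ and $F(f_\om)$ and the Leibniz rule on $DF(f_\om)\cdot v$, and regroup into (i) a difference $\pa F(f_\om+v)-\pa F(f_\om)$ contracted against $D_{Y_1}f_\om$ or $D_{Y_1}v$, and (ii) a term $\sim\pa^2 F(f_\om)\,D_{Y_1}f_\om\,v$ carrying an explicit $v$. The fundamental theorem of calculus gives $|\pa F(f_\om+v)-\pa F(f_\om)|\les\sup_{0\le s\le1}|\pa^2 F(f_\om+sv)|\,|v|\les(1+|v|^{p-2})|v|\les|v|+|v|^{p-1}$, using $|f_\om+sv|^{p-2}\les f_\om^{p-2}+|v|^{p-2}\les1+|v|^{p-2}$ ($f_\om$ is bounded, $p\ge2$), while $|\pa^2 F(f_\om)|\les f_\om^{p-2}\les1$; this is the first estimate. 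The whole computation is carried out with covariant derivatives precisely so that the $iA^\ep$–parts organise into $D_{Y_1}f_\om$ and $D_{Y_1}v$ with no bare factor of $A^\ep$ left over.

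For the second bound I would differentiate once more with $D_{Y_2}$. The terms carrying a \emph{second} covariant derivative $D_{Y_1}D_{Y_2}f_\om$ or $D_{Y_1}D_{Y_2}v$ again regroup into $\bigl(\pa F(f_\om+v)-\pa F(f_\om)\bigr)$ contracted with that derivative, yielding the $(|v|+|v|^{p-1})(|D_{Y_1}D_{Y_2}f_\om|+|D_{Y_1}D_{Y_2}v|)$ contribution. The remaining terms are products of two \emph{first} covariant derivatives times a coefficient which is $\pa^2 F$ at $f_\om+v$ or at $f_\om$ — bounded by $\les1$ (or by $\les\|v\|_{L^\infty}$) — or the singular $\pa^3 F$, which is only ever evaluated at the pure soliton argument $\zeta=f_\om>0$ (\prop{prop:ground:f}) and always comes multiplied by $(D_{Y_i}f_\om)(D_{Y_j}f_\om)\,v$. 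For such a term one writes, using $f_\om>0$, $f_\om^{p-2}\les1$, and $ab\le\frac12(a^2+b^2)$,
\[
f_\om^{p-3}|D_{Y_i}f_\om|\,|D_{Y_j}f_\om|\,|v|=\frac{|D_{Y_i}f_\om|}{f_\om}\,f_\om^{p-2}|D_{Y_j}f_\om|\,|v|\les\bigl(1+|D_{Y_i}\ln f_\om|\bigr)\bigl(|D_{Y_j}f_\om|^2+|v|^2\bigr),
\]
where $|D_{Y_i}f_\om|/f_\om\les1+|D_{Y_i}\ln f_\om|$ follows from the exponential decay of $f_\om$ (\prop{prop:ground:f}) and the bound \eqref{eq:tildeA:bd} on $A^\ep$. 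Since $1+|D_{Y_i}\ln f_\om|\les1+\|v\|_{L^\infty}+\sum_j|D_{Y_j}\ln f_\om|$, every piece matches the claimed bound; summing completes the proof.

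The main obstacle is bookkeeping on two fronts: checking that the singular derivative $\pa^3 F$ — and, more generally, any difference quotient used to produce a factor $v$ — only meets the strictly positive argument $f_\om$, never a segment $f_\om+sv$ that could run through $0$; and organising the phases $\e^{\pm i\Th}$ in $\phi=\phi_S+\e^{i\Th}v$ together with $A^\ep$ so that the chain and Leibniz rules reproduce \emph{only} the covariant quantities $D_Yf_\om$, $D_Yv$, $D_{Y_1}D_{Y_2}f_\om$, $D_{Y_1}D_{Y_2}v$ (the order of the two $D_Y$'s mattering only up to a curvature term $\les\ep^2$), with no surviving bare $A^\ep$ factors, which would lie outside the scope of the stated inequality.
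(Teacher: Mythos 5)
The paper's own proof is a single sentence ("It follows by direct calculations and the properties of $f_\om$\dots"), so there is no detailed argument to compare against; your fill-in follows the natural route and is essentially correct. The structural observations are the right ones: rewrite $\mathcal{N}$ as the quadratic Taylor remainder of the charge-$1$ equivariant map $F(\zeta)=|\zeta|^{p-1}\zeta$ around the real argument $f_\om$, differentiate twice using the covariant chain and Leibniz rules, and observe that the only place the possibly singular third derivative $\pa^3F$ ($\les|\zeta|^{p-3}$, relevant for $2\le p<3$) is evaluated is at the pure soliton argument $f_\om>0$ --- it arises solely from differentiating the coefficient $\pa F(f_\om)$ of the linearization twice, never along a segment $f_\om+sv$, because the first parenthetical group in the second bound deliberately carries no extra factor of $v$ and can therefore use the bounded $\pa^2F$.

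One claim you make is not literally true as stated, and you flag it yourself as "the main obstacle": that the covariant bookkeeping leaves "no surviving bare $A^\ep$ factors." The covariant chain rule is exact for $D_Y[F(\psi)]$ because $F$ is charge-$1$ equivariant ($\pa_\zeta F\cdot\zeta-\pa_{\bar\zeta}F\cdot\bar\zeta=F$), but the linearization $DF(f_\om)\cdot v=\pa_\zeta F(f_\om)v+\pa_{\bar\zeta}F(f_\om)\bar v$ is \emph{not} charge-$1$ in $v$ alone --- the $\bar v$ summand has charge $-1$. Consequently $D_{Y}$ applied to $\mathcal{N}$ leaves a genuine residual proportional to $iA^\ep_Y\,\pa_{\bar\zeta}F(f_\om)\,\bar v$, i.e.\ a bare $A^\ep$. (One can verify directly for $p=3$, where $\mathcal{N}=2f_\om|v|^2+f_\om v^2+v^2\bar v$, that $D_Y\mathcal N$ decomposes into a covariant part plus $-2iA_Y^\ep f_\om v^2$.) The residual still fits the claimed bound, but by a different mechanism than the one you describe: $\pa_{\bar\zeta}F(f_\om)=\tfrac{p-1}{2}f_\om^{p-1}$ supplies a factor $f_\om$, and since $D_Yf_\om=\pa_Yf_\om+iA^\ep_Yf_\om$ has real part $\pa_Yf_\om$ and imaginary part $A^\ep_Yf_\om$, one has $|A^\ep_Yf_\om|\le|D_Yf_\om|$; hence $|A^\ep_Y|\,f_\om^{p-1}|v|\les f_\om^{p-2}|D_Yf_\om|\,|v|\les(|v|+|v|^{p-1})|D_Yf_\om|$ for $p\ge2$. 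The second-order residuals are absorbed by the same device. You should make this absorption explicit rather than asserting that no bare $A^\ep$ survives; the deferred "bookkeeping" is exactly the place where the proof would otherwise be incomplete.
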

\begin{proof}
	It follows by direct calculations and the properties of $f_\om$ summarized in Theorem \ref{prop:ground:f}.
\end{proof}
With this lemma and the modulation equations \eqref{modeq}, we are now able to estimate the higher order derivatives of the modulation curve $\la(t)$.
\begin{proposition}
	\label{prop:higher:mod}
	The modulation curve $\gamma(t)$ verifies the following higher order estimates 
	\begin{align*}
		&|\ddot{\ga}|\les\ep^2,\\
		&|\pa_t^3\ga|\les\ep^2(1+\|v\|_{L^\infty})+\ep\|\pa_\la\phi_S X^2v\|_{L^2}
	\end{align*}
	for all $0\leq t\leq T/\ep$. Here $X=\pa_t+ u_0^k\pa_k$ is the timelike vector field. 
\end{proposition}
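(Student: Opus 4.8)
The plan is to differentiate the modulation equation \eqref{modeq}, namely $(M_0+M_1)\dot\ga = K(t;\la(t))$, once and then twice in time, and to estimate each resulting term using the already-established first-order bound $|\dot\ga|\les C_2\ep^2+\|v\|_{H^1}^2\les\ep^2$ (Corollary \ref{corcontrga}), the orbital stability $\|v\|_{H^1}+\|w\|_{L^2}\les\ep$ from Proposition \ref{prop:fix}, the connection-field bounds \eqref{eq:cond4:A}, \eqref{eq:tildeA:bd}, and the exponential decay of the soliton. First I would record that $M_0$ is invertible with inverse bounded uniformly on $\La_{\delta_0}$ (Lemma \ref{nondegD}) and $|M_1|\les\|v\|_{L^2}+\|w\|_{L^2}\les\ep$, so $(M_0+M_1)^{-1}$ is uniformly bounded and $\ddot\ga$ satisfies
\[
|\ddot\ga|\les |\pa_t(M_0+M_1)|\,|\dot\ga| + |\pa_t K|.
\]
For $\pa_t(M_0+M_1)$: the $M_0$ part depends only on $\la$ through $(\om,u)$, so $|\pa_t M_0|\les|\dot\la|\les 1$, contributing $|\dot\la||\dot\ga|\les\ep^2$; the $M_1$ part involves one time derivative landing on $\pa_\la^2\phi_S,\pa_\la^2\psi_S$ (producing $|\dot\la|$ times bounded soliton quantities, hence $\les\ep$) or on $\e^{i\Th}w,\e^{i\Th}v$, which using the wave equation \eqref{eq:CSF:fix} trades $\pa_t$ for $\pa_t v=\psi_S$-type and $w$-type quantities that are $\les\ep$ in $L^2$; so $|\pa_t M_1|\les\ep$ and this term contributes $\ep\cdot\ep^2$. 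For $\pa_t K$, using the expression \eqref{eq:def4F}, the time derivative either hits $\pa_\la\phi_S$ (bounded, times $\dot\la$), hits the connection-field factors $b^\mu,c$ (giving $|\pa A^\ep|\les\ep^2$ times bounded quantities), hits $\pa_\mu\phi,\phi$ (for which $\pa_t\phi,\pa_t^2\phi$ are controlled via the equation by $\phi,\nabla\phi,|\phi|^{p-1}\phi$ and the connection terms), or hits the nonlinearity $\mathcal N(\la)$, for which Lemma \ref{lem:Y:N} gives $|D_X\mathcal N(\la)|\les(|v|+|v|^{p-1})(|D_Xf_\om|+|D_Xv|+|v|)$, and here $|Xf_\om|\les|Xz|\les\ep+(1+|z|)(C_2\ep^2+\|v\|_{H^1}^2)$ as computed before Proposition \ref{propcon}. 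Each contribution is $\les\ep^2$, giving $|\ddot\ga|\les\ep^2$.

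For the third derivative I would differentiate once more, so that schematically
\[
(M_0+M_1)\pa_t^3\ga = \pa_t^2 K - 2\pa_t(M_0+M_1)\,\ddot\ga - \pa_t^2(M_0+M_1)\,\dot\ga,
\]
and estimate the right side term by term. The terms with $\dot\ga\les\ep^2$ and $\ddot\ga\les\ep^2$ multiplied by the (at most $O(1)$, in fact $O(\ep)$ for the derivatives that land on $M_1$ or on the connection factors) coefficients are $\les\ep^2$. The genuinely new object is $\pa_t^2 K$: most pieces are handled as for $\pa_t K$ but with one extra derivative, producing at worst an extra factor of $\dot\la$ (bounded) or of $|\pa A^\ep|\les\ep^2$ or $\|\pa^2 A^\ep\|$-type quantities controlled by \eqref{eq:cond4:A}, and an extra factor of $\|v\|_{L^\infty}$ when two derivatives hit $\mathcal N(\la)$ via the second inequality of Lemma \ref{lem:Y:N}, which accounts for the $\ep^2(1+\|v\|_{L^\infty})$ term. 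The one term that cannot be made $\ep^2$ is the one where two spacetime derivatives, organized as the timelike $X=\pa_t+u_0^k\pa_k$, fall on the radiation term $v$ inside $\langle\pa_\la\phi_S,\dots\rangle$ — more precisely the piece $\langle b^\mu\pa_\la\phi_S,\pa_\mu(\e^{i\Th}v)\rangle$-type term where after using the equations the surviving second-order object is $X^2 v$ paired against $\pa_\la\phi_S$ times a factor $|b^\mu|\les\ep$; this produces exactly $\ep\|\pa_\la\phi_S\,X^2v\|_{L^2}$.

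The main obstacle, and the reason the bound is stated with $\|\pa_\la\phi_S X^2v\|_{L^2}$ rather than simply $\ep^2$, is precisely this last term: one cannot afford a crude $\|\pa_t^2 v\|_{L^2}$ estimate (that would be $O(\ep)$, not $O(\ep^2)$, and moreover controlling two plain time derivatives of $v$ in $L^2$ would need $H^2$-type control we do not have at this stage), so it is essential to organize the two derivatives as the timelike $X$, exploit that $X\phi_S$ and $X^2\phi_S$ are $O(\ep)$ because the soliton moves along $X$, and leave the $\|\pa_\la\phi_S X^2v\|_{L^2}$ factor to be absorbed later when the higher-order energy estimates for $\phi$ (Proposition \ref{pro:phi:HSob}) are closed together with this bound in a joint bootstrap. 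The careful bookkeeping — making sure every time derivative that is traded via \eqref{eq:CSF:fix} or \eqref{defofH} either lands on a soliton factor (harmless), produces a connection factor of size $\ep^2$, produces a $\dot\la$ or $\dot\ga$, or is organized into $X$ acting on $v$ — is the technical heart of the argument, but each individual manipulation is routine given Lemmas \ref{lemnonlinear}, \ref{lem:Y:N}, the bounds \eqref{eq:tildeA:bd}, \eqref{eq:weighted:phi:ex}, and Corollary \ref{corcontrga}.
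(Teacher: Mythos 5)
Your overall strategy matches the paper's: differentiate the modulation equation \eqref{modeq} once and twice, invoke the nondegeneracy of $M_0$ from Lemma \ref{nondegD} and the smallness of $M_1$, trade $\pa_{tt}\phi$ for spatial data via \eqref{defofH} plus integration by parts, and organize time derivatives as the timelike $X$ to exploit $|Xz|\les\ep$ and $|Xf_\om|\les\ep$. The coarser bounds you quote for $\pa_t M_0$ and $\pa_{tt}M_0$ (namely $O(1)$ rather than the paper's $O(\ep^2)$) are still adequate because they are only ever multiplied by $\dot\ga$ or $\ddot\ga$, so this is not an issue. Your treatment of $\ddot\ga$ and of most of $\pa_{tt}K$ reproduces the paper's argument.

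There is, however, a concrete error in the last step: you attribute the surviving term $\ep\|\pa_\la\phi_S\,X^2 v\|_{L^2}$ to the connection piece $\langle b^\mu\pa_\la\phi_S,\pa_\mu(\e^{i\Th}v)\rangle$, with the factor $\ep$ coming from $|b^\mu|\les\ep$. This is not where the term comes from, and the mechanism you describe would not produce it: the $b^\mu$ and $c$ contributions to $\pa_{tt}K$ all close at $O(\ep^2)$. Indeed $b^0=2i(\tilde A^\ep)^0$ satisfies $|b^0|\les\ep^2$ on the support of $\pa_\la\phi_S$ by \eqref{eq:tildeA:bd}, and the $A^\ep_b$ part of $b^j$, though linearly growing, is tamed by the exponential decay of $\pa_\la\phi_S$ and by $|x_1|+|x_2|\les|\eta|+|z|\les 1+|z|$ from \eqref{baxi}; the only time-derivative terms of $\pa_\mu\phi$ that do not reduce to such weights are removed by trading $\pa_{tt}\phi$ for spatial derivatives via \eqref{defofH} and integrating by parts onto the soliton. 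The genuine source of $\ep\|\pa_\la\phi_S\,X^2 v\|_{L^2}$ is $\langle \e^{-i\Th}\pa_\la\phi_S, X^2\mathcal{N}(\la)\rangle$: by the second inequality of Lemma \ref{lem:Y:N}, $|X^2\mathcal{N}(\la)|$ contains the piece $(|v|+|v|^{p-1})\,|D_XD_Xv|$, and pairing this against $\pa_\la\phi_S$ and bounding the extra factor by $\|v\|_{L^2}\les\ep$ gives exactly $\ep\|\pa_\la\phi_S\,D_XD_X v\|_{L^2}$, which equals $\ep\|\pa_\la\phi_S\,X^2 v\|_{L^2}$ up to lower-order corrections absorbed into $\ep^2$ (since $D_X=X+i\langle A^\ep,X\rangle$ with $|\langle A^\ep,X\rangle|\les\ep$). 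In other words, the factor $\ep$ multiplying $\|\pa_\la\phi_S\,X^2 v\|_{L^2}$ comes from the smallness of $v$ in the quadratic nonlinearity, not from the smallness of the connection. You correctly identify that this term must be left open and closed jointly with Proposition \ref{pro:phi:HSob}, but the accounting for which term in $\pa_{tt}K$ actually produces it needs to be corrected.
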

\begin{proof}
	In view of the modulation equations \eqref{modeq}, we obtain the ODE for $\dot{\ga}$
	\begin{equation*}
		(M_0+M_1)\ddot{\ga}+\pa_t(M_0+M_1)\dot{\ga}=\pa_t K(t;\la(t)).
	\end{equation*}
	From the proof of Lemma \ref{nondegD}, the components of the  matrix $M_0$ relies only on $\omega$ and $u$. Since we have shown in the previous section that 
	\[
	|\dot{\omega}|+|\dot{u}|\leq |\dot\gamma|\les \ep^2, \quad |\dot\la|=|\dot\gamma+V(\la)|\les 1, 
	\]
	we see that 
	\[
	|\pa_t M_0|\les |\dot{\omega}|+|\dot{u}|\les \ep^2. 
	\]
	By the definition of  $M_1$   in  \eqref{defofM}, we rely on the equation \eqref{defofH} for the scalar field $\phi$.   We show that
	\begin{align*}
		|\pa_t M_1|&\les |\dot \la|\cdot \|w\|_{L^2}+|\langle \pa_\la^2\phi_S, \pa_t(\e^{i\Th}w)\rangle_{dx}|+
		|\dot \la|\cdot \|v\|_{L^2}+|\langle\pa_{\la}^2\phi_S, \pa_t\left(\phi-\phi_S\right)\rangle_{dx}|\\
		& \les \ep +|\langle\pa_{\la}^2\phi_S,
		\phi_{tt}-\pa_t\psi_S \rangle_{dx}|+|\langle\pa_{\la}^2\psi_S,  \psi_S+e^{i\Th}w-\pa_{\la}\phi_S (V(\la)+\dot \ga)\rangle_{dx}|\\
		&\les \ep +|\langle\pa_\la^2\phi_S,  \Delta \phi-(m^2\phi-|\phi|^{p-1}\phi)+b^\mu\pa_\mu\phi+c\phi-\pa_t\psi_S\rangle_{dx}|
		+\|w\|_{L^2}+|\dot{\ga}|\\
		&\les \ep + |\langle\pa_\la^2\phi_S,  \Delta (e^{i\Th}v)-(m^2 e^{i\Th}v-|\phi|^{p-1}\phi+f_{\om}^{p-1}\phi_S)+b^\mu\pa_\mu\phi+c\phi \rangle_{dx}|\\
		&\les \ep + \|\nabla (e^{i\Th}v)\|_{L^2} \|\nabla \pa_\la^2\phi_S\|_{L^2}+\| v\|_{L^2}  +\ep \|\pa\phi\|_{L^2}+\ep^2 \|\phi\|_{L^2}\|(|x_1|^2+|x_2|^2)\pa_\la^2\phi_S\|_{L^2}\\ 
		&\les \ep +\|v\|_{H^1}+\ep^2\|(1+|z|^2)\pa_\la^2\phi_S\|_{L^2}\\ 
		&\les \ep. 
	\end{align*}
	Here we have used the identity \eqref{idenofphiS} for the translated solitons. We hence have shown
	\[
	|\pa_t(M_0+M_1)\dot\gamma|\les \ep^3,\quad \forall t\leq T/\ep.
	\]
	Since $M_0+M_1$ is non-degenerate in view of  Lemma \ref{nondegD}, it remains to control the time derivative of the nonlinearity $K(t;\la(t))$. By the definition of $K(t;\la(t))$ in \eqref{eq:def4F}, when the derivative hits on the soliton part, the estimate is the same as the proof of Lemma  \ref{lemDF}. More precisely the proof of Lemma \ref{lemDF} also implies that 
	\begin{align*}
		| \langle \pa_t\pa_\la \phi_S, b^\mu \pa_\mu\phi+c\phi +\e^{i\Th}\mathcal{N}(\la)\rangle_{dx} |\les | \langle \pa^2_\la \phi_S, b^\mu \pa_\mu\phi+c\phi +\e^{i\Th}\mathcal{N}(\la)\rangle_{dx} |\les \ep^2. 
	\end{align*}
	For the other terms when the time derivative hits on the connection field, in view of the assumption \eqref{eq:cond4:A} and the estimate \eqref{eq:tildeA:bd}, we can show that 
	\begin{align*}
		& |\langle  \pa_\la \phi_S, \pa_t(b^\mu\pa_\mu\phi+c\phi)\rangle_{dx}| \\ 
		&\les \ep^2 \| (1+|x_1|^2+|x_1|^2) \pa_\la \phi_S\|_{L^2} (\|\pa_\mu\phi\|_{L^2}+\|\phi\|_{L^2})  +|\langle  \pa_k(b^k\pa_\la \phi_S),  \pa_t\phi \rangle_{dx}|+|\langle   \pa_\la \phi_S, b^0 \pa_{tt}\phi \rangle_{dx}|\\ 
		&\les \ep^2+\ep^2\|\sqrt{1+|z|}\pa_k\pa_\la\phi_S\|_{L^2}+|\langle   b^0\pa_\la \phi_S,   \Delta \phi-(m^2\phi-|\phi|^{p-1}\phi)+b^\mu\pa_\mu\phi+c\phi \rangle_{dx}|\\ 
		&\les  \ep^2 +\|\nabla (b^0\pa_\la \phi_S)\|_{L^2} \|   \nabla \phi\|_{L^2} +\|(|b^\mu|+|c|+1) b^0\pa_\la \phi_S\|_{L^2}(\|\phi\|_{L^2}+\|\pa\phi\|_{L^2}+\||\phi|^p\|_{L^2})  \\ 
		&\les \ep^2+\ep^2\|\pa_\la \phi_S\|_{L^2}+\ep^2\|( |x_1|^2+|x_2|^2+1) \sqrt{1+|x-\xi(t)|} \pa_\la \phi_S\|_{L^2}\\ 
		&\les \ep^2
	\end{align*}
	Here we used the equation \eqref{defofH} for the scalar field $\phi$ and the bound 
	\[
	|x-\xi(t)|\les |z|,\quad |x_1|+|x_2|\les |\eta(t)|+|z|\les 1+|z|. 
	\]
	The most difficult term turns out to be the one when the time derivative hits on the nonlinearity $\mathcal{N}$ since in view of Lemma \ref{lem:Y:N} it may not be of order $\ep^2$. The key observation is that the soliton travels along the timelike vector field $X$. Recall that we have shown in section \ref{sec:X:zth}
	that 
	\[
	|Xz| \les |u|+(1+|z|)(\ep^2+\|v\|_{H^1}^2)\les \ep+  \ep^2(1+|z|). 
	\]
	This in particular implies that 
	\begin{align*}
		|X f_\om(z)|=|\pa_\om f_\om \dot{\om}+\nabla_z f_\om(z) Xz|\les \ep.
	\end{align*}
	Here we may note that the ground state $f_\om(z)$ decays exponentially in $z$. Now by using  Lemma \ref{lemnonlinear} and Lemma \ref{lem:Y:N}, we can estimate that 
	\begin{align*}
		&|\langle\pa_\la\phi_S, \pa_t\mathcal{N}(\la)\rangle_{dx}| \\ 
		&\les
		|\langle\pa_\la\phi_S, X
		\mathcal{N}(\la)\rangle_{dx}|+|\langle\pa_\la\phi_S, u_0\nabla_x\mathcal{N}(\la)\rangle_{dx}|\\
		&\les\|\pa_\la\phi_S(|v|+|v|^{p-1})(|X f_\om|+|X
		v|+|v|)\|_{L^1}+|\langle u_0\nabla_x(\pa_\la\phi_S), \mathcal{N}(\la)\rangle_{dx}|\\
		&\les(\|\pa_\la\phi_S \cdot X v\|_{L^2} +\ep)(\|v\|_{L^2}+\|v\|_{H^1}^{p-1})+\|v\|_{H^1}^2+\|v\|_{H^1}^p\\
		&\les \ep^2+\ep \|\pa_\la\phi_S \cdot \pa_t v\|_{L^2}+ \ep \|\nabla v\|_{L^2}\\ 
		&\les
		\ep^2+\ep \| \pa_\la\phi_S \cdot\pa_t(\e^{-i\Th}(\phi-\phi_S))\|_{L^2}  \\
		&\les\ep^2+\ep(\|w\|_{L^2}+\|v\|_{H^1})\\ 
		&\les \ep^2.
	\end{align*}
	We remark here  although $\dot\Th$ depends on $z$, the soliton $|\pa_\la\phi_S|$ decays exponentially in $z$, which in particular shows that $|z\pa_\la\phi_S|$ is uniformly bounded. The above computations then indicate that 
	\[
	|\pa_t K(t;\la(t))|\les \ep^2.
	\]
	Therefore the equation for $\ddot{\ga}$ together with the nondegeneracy of the leading matrix $M_0+M_1$ proven in Lemma \ref{nondegD} implies that 
	\[
	|\ddot{\ga}|\les \ep^2. 
	\]
	We proceed in a similar manner to estimate the third derivative of the modulation curve. Taking time derivative again on the modulation equation  \eqref{modeq}, we derive that 
	\[
	(M_0+M_1 )\pa_{t}^3
	\ga+2\pa_t(M_0+M_1 )\ddot{\ga}+\pa_{tt}(M_0+M_1)\dot{\ga}=\pa_{tt}K(t;\la(t)).
	\]
	We have already shown above that $\pa_t(M_0+M_1)$ is uniformly bounded. For the second time derivative of $M_0+M_1$, note that the components of $M_0$ relies only on $u$ and $\om$, which in particular implies that 
	\[
	|\pa_{tt}M_0|\les |\ddot{\ga}|+|\dot{\gamma}|\les \ep^2. 
	\]
	By the definition of the matrix $M_1$, the components are the inner product of a soliton ($\pa_\la\phi_S$ or $\pa_\la\psi_S$) and the remainder terms $v$ or $w$. 
	When the time derivative hits on the soliton part, the bound is similar to that of $\pa_t M_1$. The main new terms in $\pa_{tt}M_1$ are those when all the time derivatives hit on the remainder terms since we do not have higher order energy estimates for the solutions. The idea, which has already been used above, is to replace $\pa_{tt}\phi$ in view of the equation \eqref{defofH}. The higher order spatial derivatives of the solution can be transfered to the soliton part through integration by parts. This procedure leads to the same bound 
	\[
	|\pa_{tt}M_1|\les \ep.
	\]
	For the nonlinear term $K(t;\la(t))$, like the case $\pa_t K$, we only discuss those new terms when all the time derivatives hit on the connection field $\tilde{A}$ or the nonlinearity $\mathcal{N}$. By using the assumption \eqref{eq:cond4:A} and the bound \eqref{eq:tildeA:bd} for the connection field $\tilde{A}$, we can estimate that 
	\begin{align*}
		|\langle \pa_{tt}b^\mu \pa_\la \phi_S, \pa_\mu\phi\rangle_{dx}|+|\langle \pa_\la\phi_S, \pa_{tt}c \phi\rangle_{dx}|\les \|\pa^2 \tilde{A}^\ep \|_{L^2}\| \pa\phi\|_{L^2} +\|\pa^2\tilde{A}^\ep\|_{L^2} \|\phi \pa_\la\phi_S (1+ |z|)\|_{L^2}\les \ep^2.
	\end{align*}
	The main term is still the one when all the time derivatives hit  on the nonlinearity. By using Lemma \ref{lem:Y:N} and the above bound for $Xf_\om(z)$, we can show that 
	\begin{align*}
		&\left|\langle\e^{-i\Th}\pa_\la\phi_S, \pa_{tt}\mathcal{N}(\la)\rangle_{dx}\right| \\
		&\les\left|\langle\e^{-i\Th}\pa_\la\phi_S, X^2\mathcal{N}(\la)\rangle_{dx}\right|+\left|\langle\e^{-i\Th}\pa_\la\phi_S, (-(u_0\nabla_x)^2+2u_0\nabla_x X)\mathcal{N}(\la)\rangle_{dx}\right|\\
		&\les \ep^2(1+\|v\|_{L^\infty})+\ep\|\pa_\la\phi_S X^2v\|_{L^2}+\left|\langle u_0\nabla_x (\e^{-i\Th}\pa_\la\phi_S),X
		\mathcal{N}(\la)\rangle_{dx}\right| \\
		&\quad +\left|\langle(u_0\nabla_x)^2(\e^{-i\Th}\pa_\la\phi_S), \mathcal{N}(\la)\rangle_{dx}\right|\\
		&\les \ep^2(1+\|v\|_{L^\infty})+\ep\|\pa_\la\phi_S X^2v\|_{L^2} .
	\end{align*}
	Here we used the asymptotic behavior for the ground state $f_\om$ to bound $X\ln f_\om$. We hence can show that
	\begin{align*}
		|\pa_{tt}K(t;\la(t))| \les\ep^2(1+\|v\|_{L^\infty})+\ep\|\pa_\la\phi_S X^2v\|_{L^2}.
	\end{align*}
	Then Lemma \ref{nondegD}  leads to the estimate
	\[
	|\pa_{t}^3\ga|\les \ep^2(1+\|v\|_{L^\infty})+\ep\|\pa_\la\phi_S
	X^2v\|_{L^2}.
	\]
	We thus finished the proof for the Proposition.  
\end{proof}
%%%%%%%%%%%%%%%%
\subsubsection{Linearized equation for $\tilde{v}$}
To derive higher order energy estimates for the remainder term $v$, we make use of the modified modulation curve  $\tilde{\la}(t)$, which is the integral curve of $V(\la)$. Under the new composition \eqref{eq:decomp:new}, we can find the equation for $\tilde{v}$  
\begin{equation}
	\label{eq:tilde:v}
	L_\ep \tilde{v}+\mathcal{N}(\tilde{\la})+\tilde{K}=0,
\end{equation}
where 
\begin{align*}
	\tilde{K}%=\e^{-i\Th(\tilde{\la})}(\Box_{A^\ep}\phi_S(\tilde{\la};x)-m^2\phi_S+|\phi_S|^{p-1}\phi_S) 
	=-\pa_{\tilde{\la}}\phi_S(x;\tilde{\la})\cdot \pa_t V(\la)+(i\Box \Th-\pa^\mu \Th\pa_\mu\Th+\om_0^2)\tilde{v}
\end{align*}
and the linear operator $L_\ep$  
\begin{equation*}
	L_{\ep}v= \Box_{A^\ep} v-(m^2-\om_0^2) v+2i\pa_{\mu}\Th\cdot D^\mu
	v+f_{\om_0}^{p-1}(z)v+(p-1)f_{\om_0}^{p-1}(z)v_1
\end{equation*}
for any complex valued function $v=v_1+iv_2$. Since $\pa_t{\tilde{\la}(t)}=V(\la)$, we compute that 
\begin{align*}
	\pa_t z(x;\tilde{\la})&=-(u_0+u)-(\rho(0)-1)\frac{(u_0+u)\cdot (u_0+u(0))}{|u_0+u(0)|^2}(u_0+u(0)),\quad \\ 
	\pa_k z(x;\tilde{\la})&=\pa_k x+(\rho(0)-1)\frac{\pa_k x\cdot (u_0+u(0))}{|u_0+u(0)|^2}(u_0+u(0)),\\
	\pa_t\Th(\tilde{\la})&=\dot\theta-\omega_0 (u_0 +u(0))\dot z=\frac{\om}{\rho}+\om_0 \rho(0) (u+u_0)\cdot(u_0+u(0))  ,\\ 
	\pa_k\Th(\tilde{\la})&=-\om_0 (u_0+u(0)) \dot \pa_k z=-\om_0\rho(0) (u_0+u(0))_k.
\end{align*}
In particular we have 
\begin{align*}
	\Box \Th &=-\pa_t(\frac{\om}{\rho}), \\ 
	\om_0^2 -\pa^\mu \Th\pa_\mu\Th&=|\frac{\om_0}{\rho(0)}+\om_0\rho(0)|u(0)+u_0|^2 |^2 -|\frac{\om}{\rho}+\om_0\rho(0)(u+u_0)(u(0)+u_0)|^2.
\end{align*}
We therefore can derive that 
\[
|\tilde{K}|\les |\dot\ga| |\tilde{v}|\les \ep^2 |\tilde{v}|. 
\]
Now we recall the energy estimate for the covariant linear Klein-Gordon equation. Define the energy momentum tensor $\tilde{T}_{\mu\nu}[v]$ associated to the connection field $A^\ep$
\[
\tilde{T}_{\mu\nu}[v]=\l D_\mu v,D_\nu v\r-\f12 m_{\mu\nu}(\l D^\ga v, D_\ga v\r+2(m^2-\om_0^2)|v|^2).
\]
For any vector field $Y$, we have the identity
\[
\pa^\mu(\tilde{T}_{\mu\nu}[v]Y^\nu)=\tilde{T}^{\mu\nu}[v]\pi^Y_{\mu\nu}+\l \Box_{A^\ep}v-(m^2-\om_0^2)v, D_Y v\r.
\]
For the Killing vector field  $Y=X=\pa_t+u_0^k\pa_k$, integrate on the region
$[0, t]\times \mathbb{R}^3$. We obtain the energy identity
\begin{align*}
	\left. \int_{\mathbb{R}^3}\tilde{T}_{\mu 0}[v]X^\mu  d x\right|_{t=t}=\left.\int_{\mathbb{R}^3}\tilde{T}_{\mu 0}[v]X^\mu   dx \right|_{t=0}+
	\int_{0}^{t}\int_{\mathbb{R}^3} \l \Box_{A^\ep}v-(m^2-\om_0^2)v, D_X v\r dxdt.
\end{align*}
Since $X$ is timelike, we have 
\begin{align*}
	\int_{\mathbb{R}^3} |Dv|^2+|v|^2 dx\les \int_{\mathbb{R}^3}\tilde{T}_{\mu 0}[v]X^\mu  d x. 
\end{align*}
Here we may note that $m^2-\om_0^2>0$. We thus obtain the energy estimate 
\begin{align*}
	\int_{\mathbb{R}^3} |Dv|^2+|v|^2 d x \les  \int_{\mathbb{R}^3} |Dv(0, x)|^2+|v(0, x)|^2  dx +
	|\int_{0}^{t}\int_{\mathbb{R}^3} \l \Box_{A^\ep}v-(m^2-\om_0^2)v, D_X v\r dxdt|.
\end{align*}
To derive energy estimate for the linear operator $L_\ep$, we show that the other linear terms are error terms. The key observation is that the soliton travels along the direction $X$. We transfer the derivative $D_X$ to gain an extra smallness.   First using integration by parts, we can  write
\begin{align*}
	&\l 2i\pa_\mu\Th\cdot D^\mu v, D_X v\r = \pa^\mu\Th \l 2i D_\mu v,D_X v\r= \pa^\mu\Th\left(\pa_\mu\l iv, D_X v\r-X\l iv, D_\mu v\r\right)\\
	&=\pa_\mu\left( \pa^\mu\Th\l iv, D_X v\r\right)-X\left( \pa^\mu\Th \l iv, D_\mu v\r\right)-\pa_\mu \pa^\mu\Th 
	\l iv, D_X v\r+X( \pa^\mu\Th)\l iv, D_\mu v\r.
\end{align*}
The above computations indicate that 
\[
|\pa_\mu\pa^\mu\Th|+|X\pa^\mu\Th|\les  |\dot \ga|\les \ep^2. 
\]
Therefore we can bound that 
\begin{align*}
	& \left|\int_{0}^{t}\int_{\mathbb{R}^3}\l 2i\pa_\mu\Th\cdot D^\mu v, D_X v\r d xdt \right| \\
	& \les\left|\left.\int_{\mathbb{R}^3}\l iv, \pa^0\Th D_X v-\pa^\mu\Th D_\mu v\r d x\right|_{0}^{t}\right|+ \ep^2
	\int_{0}^{t}\int_{\mathbb{R}^3}|  v| |D v|d xdt \\
	&\les \|v(t, x)\|_{L^2} \|D v(t, x)\|_{L^2} +\|v(0, x)\|_{L^2} \|D v(0, x)\|_{L^2} +\ep^2 \int_{0}^{t}\|v\|_{L^2}^2+\|D  v\|_{L^2}^2ds.
\end{align*}
For the linear  terms involving the ground state, note that 
\begin{align*}
	X f_{\om_0}(z(x;\tilde{\la}(t)))= \nabla_z f_{\om_0} (\pa_t z+u_0\nabla z)=\nabla_z f_{\om_0}(u+(\rho(0)-1)\frac{u\cdot (u_0+u(0))}{|u_0+u(0)|^2}(u_0+u(0))).
\end{align*}
In particular we have 
\begin{align*}
	|Xf_{\om_0}(z(x;\tilde{\la}(t)))|\les |u|\les \ep. 
\end{align*}
We therefore can show that  
\begin{align*}
	& 2\int_{0}^{t}\int_{\mathbb{R}^3}\l  f_{\om_0}^{p-1}(v+ (p-1)  v_1),
	D_X v \r  d xdt  = \left.\int_{\mathbb{R}^3} f_{\om_0}^{p-1}( 
	|v|^2+(p-1)v_1^2)d x \right|_{0}^{t} \\ 
	&\qquad \quad -\int_{0}^{t}\int_{\mathbb{R}^3}X (f_{\om_0}^{p-1}) (|v|^2+(p-1)v_1^2) +2 (p-1)f_{\om_0}^{p-1}v_1 v_2 \l A^\ep, X \r dxdt.
\end{align*}
Here the inner product of the 1-form and the vector field is given by 
\begin{align*}
	\l A^\ep, X \r= (A^\ep)^0+(u_0)_k (A^\ep)^k= (\tilde{A}^\ep)^0+(u_0)_k (\tilde{A}^\ep)^k.
\end{align*}
Hence by using the estimate \eqref{eq:tildeA:bd} for the connection field $\tilde{A}$, we can estimate that 
\begin{align*}
	\left| 2\int_{0}^{t}\int_{\mathbb{R}^3}\l  f_{\om_0}^{p-1}(v+ (p-1)  v_1),
	D_X v \r  d xdt \right| \les  \left|\int_{\mathbb{R}^3}   
	|v|^2 d x |_{0}^t \right|  + \ep \int_{0}^{t}\int_{\mathbb{R}^3}  |v|^2  dxdt.
\end{align*}
Combining  the above estimate, we have shown that  
\begin{align*}
	&\left|\int_{0}^{t}\int_{\mathbb{R}^3}\l 2i\pa_\mu\Th\cdot D^\mu v
	+f_{\om_0}^{p-1}(z)v+(p-1)f_{\om_0}^{p-1}(z)v_1,
	D_ X v \r d x dt \right|\\
	&\les \|v(t, x)\|_{L^2} \|D v(t, x)\|_{L^2} +\|v(0, x)\|_{L^2} \|D v(0, x)\|_{H^1}+ \|v(t, x)\|_{L^2}^2   +\|v(0, x)\|_{L^2}^2\\ 
	& \quad +\ep\int_{0}^{t}\|v\|_{L^2}^2+\|Dv\|_{L^2}^2ds.
\end{align*}
By using Gronwall's inequality to absorb the last term in the above inequality (here we may note that $t\leq T/\ep$), we therefore can derive the energy estimate for the linear operator $L_\ep$
\begin{align}
	\label{eq:ee:Lep}
	\|Dv\|_{L^2}^2+\|v\|_{L^2}^2 \les    \|Dv(0, x)\|_{L^2}^2+\|v(0, x)\|_{L^2}^2   +
	\left|\int_{0}^{t}\int_{\mathbb{R}^3} \l L_\ep v, D_X v\r dxdt\right|.
\end{align}
Here we used the Cauchy-Schwarz inequality 
\begin{align*}
	2\|v(t, x)\|_{L^2} \|D v(t, x)\|_{L^2}\leq \ep_2^{-1}\|v(t, x)\|_{L^2}^2 +\ep_2 \|D v(t, x)\|_{L^2}^2
\end{align*}
to absorb the second term on the right hand side, in which the positive constant $\ep_2$ is sufficiently small. 
%%%%%%%%%%%%%%%
\subsubsection{Energy estimate for $D\tilde{v}$}
To derive the covariant energy estimate for $D\tilde{v}$, commute the equation \eqref{eq:tilde:v} with the covariant derivative $D_X$. Here recall that the vector field $X=\pa_t+u_0\nabla$.  The above energy estimate \eqref{eq:ee:Lep} indicates that we need first to compute  $L_\ep D_X\tilde{v}$. In view of the equation \eqref{eq:tilde:v} for $\tilde{v}$, we have 
\begin{align*}
	L_\ep D_X \tilde{v} & = [L_\ep, D_X] \tilde{v}+ D_X L_\ep \tilde{v} \\ 
	&=[\Box_{A^\ep}, D_X]\tilde{v}+[2i \pa_{\mu}\Th\cdot D^\mu, D_X]
	\tilde{v}- X (f_{\om_0}^{p-1}(z)) \tilde{v} \\ 
	&\quad -(p-1) X (f_{\om_0}^{p-1}(z))\tilde{v}_1-D_X( \mathcal{N}(\tilde{\la})+\tilde{K}).
\end{align*} 
First recall the commutator identity (see for example \cite{YangYu:MKG:smooth}) 
\begin{align*}
	[\Box_{A^\ep}, D_\mu]\tilde{v}&=2i F_{\nu\mu}^\ep  D^\nu \tilde{v} +i \pa^\nu  F_{\nu \mu}^\ep \tilde{v}, \\ 
	[2i \pa_{\mu}\Th\cdot D^\mu, D_X]\tilde{v}&=-2i X\pa_\mu \Th \cdot D^\mu\tilde{v}-2\pa_\mu\Th (F^\ep)^\mu_{\ X} \tilde{v}.
\end{align*}
Therefore by using the estimate \eqref{eq:tildeA:bd} for the connection field $\tilde{A}$ and the energy estimate for $\tilde{v}$ in Lemma \ref{lem:tildev}, we can bound that 
\begin{align*}
	&|\int_{0}^{t}\int_{\mathbb{R}^3} \l [\Box_{A^\ep}, D_X]\tilde{v}  , D_X D_X\tilde{v}\r dxdt| \\ 
	&\les \int_0^t \|D_XD_X\tilde{v}\|_{L^2}(\| \ep^2 D\tilde{v}\|_{L^2}+\|\pa^2\tilde{A}^\ep \|_{L^4}\|\tilde{v}\|_{L^4}) ds \\ 
	&\les \int_0^t \|D_XD_X\tilde{v}\|_{L^2}( \ep^3 +\|\pa^2\tilde{A}^\ep \|_{H^1}\|\tilde{v}\|_{H^1}) ds \\ &\les  \ep^3 \int_0^t \|D_XD_X\tilde{v}\|_{L^2} ds. 
\end{align*}
Next note that 
\begin{align*}
	|X\pa \Th|\les |\dot\gamma|\les \ep^2,\quad |X(f_{\om_0})|\les \ep. 
\end{align*}
Using the bound for $\tilde{v}$ and $\tilde{v}$ in Lemma \ref{lem:tildev}, we then can show that 
\begin{align*}
	&|\int_{0}^{t}\int_{\mathbb{R}^3} \l [2i X\pa_{\mu}\Th\cdot D^\mu, D_X]
	\tilde{v}- X (f_{\om_0}^{p-1}(z)) \tilde{v}-(p-1) X (f_{\om_0}^{p-1}(z))\tilde{v}_1 , D_X D_X\tilde{v}\r dxdt| \\ 
	&\les \int_0^t \|D_XD_X\tilde{v}\|_{L^2}(\| \ep^2 D\tilde{v}\|_{L^2}+ \ep \|\tilde{v}\|_{L^2}) ds \\ 
	&\les   \ep^2 \int_0^t \|D_XD_X\tilde{v}\|_{L^2} ds. 
\end{align*}
By definition of $\tilde{K}$ and the bound for $\dot \gamma$ and $\ddot{\gamma}$, we estimate that 
\begin{align*}
	|D_X\tilde{K} | 
	\les \ep^2(|D_X\pa_{\tilde{\la}}\phi_S |+ |\pa_{\tilde{\la}}\phi_S|)+ \ep^2(|D_X\tilde{v}|+|\tilde{v}|).
\end{align*}
This shows that 
\begin{align*}
	|\int_{0}^{t}\int_{\mathbb{R}^3} \l  D_X\tilde{K}, D_X D_X\tilde{v}\r dxdt|  \les \ep^2\int_0^t \|D_XD_X\tilde{v}\|_{L^2} ds. 
\end{align*}
Finally for the nonlinearity $\mathcal{N}$, in view of Lemma \ref{lem:Y:N}, we can estimate that 
\begin{align*}
	|\int_{0}^{t}\int_{\mathbb{R}^3} \l  D_X \mathcal{N}, D_X D_X\tilde{v}\r dxdt|
	&\les \int_0^t \|D_X D_X\tilde{v}\|_{L^2}  \| (|\tilde{v}|+|\tilde{v}|^{p-1})(|D_{X}
	f_\om|+|D_X \tilde{v}|+|\tilde{v}|)\|_{L^2} ds \\ 
	&\les \int_0^t \|D_X D_X\tilde{v}\|_{L^2} ( \ep^2+\ep  \|D_X \tilde{v}\|_{L^4})   ds\\ 
	&\les  \int_0^t \|D_X D_X\tilde{v}\|_{L^2} ( \ep^2+\ep  \|D D_X \tilde{v}\|_{L^2})   ds.
\end{align*}
Combining these estimates, we have shown that 
\begin{align*}
	|\int_{0}^{t}\int_{\mathbb{R}^3} \l  L_\ep D_X \tilde{v} , D_X D_X\tilde{v}\r dxdt|\les  \int_0^t \|D_X D_X\tilde{v}\|_{L^2} ( \ep^2+\ep  \|D D_X \tilde{v}\|_{L^2})   ds.
\end{align*}
Then the energy estimate \eqref{eq:ee:Lep} leads to 
\begin{align*}
	\|DD_X\tilde{v}\|_{L^2}^2 & \les \ep^2+\|D_X\tilde{v}\|_{L^2}^2+|\int_0^t \int_{\mathbb{R}^3}  \l  L_\ep D_X \tilde{v} , D_X D_X\tilde{v}\r dxdt|\\ 
	&\les \ep^2+ \int_0^t   \|D_X D_X\tilde{v}\|_{L^2} ( \ep^2+\ep  \|D D_X \tilde{v}\|_{L^2})    ds.
\end{align*}
Gronwall's inequality then implies that 
\begin{align*}
	\|DD_X\tilde{v}\|_{L^2}^2 \les \ep^2,\quad \forall t\leq T/\ep. 
\end{align*}
To derive the energy estimate for other derivatives of the solution, we rely on elliptic estimate. 
First note that 
\begin{align*}
	D_0 D_0 %=  D_X D_X-u_0^k (2 D_k D_X-[D_X, D_k])+|u_0|^2  D_3 D_3
	=D_X D_X-u_0^k (2 D_k D_X- iF^\ep(X, \pa_k))+|u_0|^2  D_3 D_3. 
\end{align*}
Here we used the fact that $u_0$ is parallel to $(0, 0, 1)$. In particular we can   rewrite the equation \eqref{eq:tilde:v} for $\tilde{v}$ as 
\begin{align*}
	D_1D_1 \tilde{v}+D_2 D_2\tilde{v}+ (1-|u_0|^2)D_3 D_3\tilde{v} &= (m^2-\om_0^2) \tilde{v}-2i\pa_{\mu}\Th\cdot D^\mu
	\tilde{v}-f_{\om_0}^{p-1}(z)\tilde{v}-(p-1)f_{\om_0}^{p-1}(z)\tilde{v}_1\\ 
	&\quad -\mathcal{N}(\tilde{\la})-\tilde{K} +D_X D_X\tilde{v}-u_0^k (2 D_k D_X\tilde{v}- iF^\ep(X, \pa_k)\tilde{v}).
\end{align*}
Since $|u_0|<1$, elliptic estimate then implies that 
\begin{align*}
	\|D_jD_k\tilde{v}\|_{L^2} & \les \| D_1D_1 \tilde{v}+D_2 D_2\tilde{v}+ (1-|u_0|^2)D_3 D_3\tilde{v} \|_{L^2}+\| F^\ep D\tilde{v}\|_{L^2} \\ 
	&\les \ep+ \|  \tilde{v}\|_{L^2} +\| D
	\tilde{v}\|_{L^2} +\|  \mathcal{N}(\tilde{\la})\|_{L^2}+\|\tilde{K}\|_{L^2} +\| D D_X\tilde{v}\|_{L^2}  \\ 
	&\les \ep+  \|  |\tilde{v}|^2+|\tilde{v}|^{p+1}\|_{L^2}\\ 
	&\les \ep.   
\end{align*} 
We remark here that the above elliptic estimate can obtained by continuity argument or  through  integration by parts directly. The commutator of the covariant derivatives $D$ is the Maxwell field $F^\ep$, which is uniformly small in view of the condition \eqref{eq:tildeA:bd} for the connection field $A$. We therefore have obtained the energy estimate for $D\tilde{v}$
\begin{equation}
	\label{eq:EE:v:2}
	\|DD\tilde{v}\|_{L^2}\les \ep, \quad \forall t\leq T/\ep.
\end{equation} 
%%%%%%%%%%%%%%%%
\subsubsection{Energy estimate for $DD\tilde{v}$}
We proceed in a similar way to show the third order energy estimate for $\tilde{v}$, for which we need first bound the third order derivatives of the modulation curve $\gamma(t)$. In view of  Proposition \ref{prop:higher:mod} and the above second order energy estimate for $\tilde{v}$, we now are able to derive the $L^\infty$ bound for $v$.  From the decomposition \eqref{eq:decomp} corresponding to the original modulation curve $\la(t)$, by using Sobolev embedding we can show that
\begin{align*}
	\|v\|_{L^\infty}&\les  \sum\limits_{k\leq 2}\|D^k v\|_{L^2}\les \sum\limits_{k\leq 2}\| D^k \left(  \e^{-i\Th(\la)}(\phi_S(\la;x)-\phi_S(\tilde{\la};x)-\e^{i\Th(\tilde{\la})}\tilde{v})\right)\|_{L^2}\\
	&\les \sum\limits_{k\leq 2}|(\pa_t)^k\la(t)-(\pa_t)^k\tilde{\la}(t)|+\sum\limits_{k\leq 2}\|D^k\tilde{v}\|_{L^2} \\
	&\les   \ep + |\dot \gamma|+|\ddot{\gamma}|\les \ep.  
\end{align*}
Since the soliton $\phi_S$ decays exponentially, the above estimate also indicates that  
\begin{align*}
	\|\pa_\la\phi_S X^2 v\|_{L^2}& \les \|D_X D_X v\|_{L^2}+\|  \pa_\la \phi_S \left(i \l A^\ep, X\r( 2 D_X v -i\l A^\ep, X\r v)+i X \l A^\ep, X\r  v \right)\|_{L^2}\\
	&\les   \|D D v\|_{L^2}+\|v\|_{L^2} +\|  |\pa_\la \phi_S|  \left( |\tilde{A}^\ep|  | D_X v|+|\tilde{A}^\ep|^2 |v|  \right)\|_{L^2} \\ 
	&\les   \|D D v\|_{L^2}+\|D v\|_{L^2}+ \|v\|_{L^2} \les \ep.  
\end{align*}
Here we used the fact that 
\[
|\l A^\ep, X\r |=|(\tilde{A}^\ep)^0+(u_0)_k (\tilde{A}^\ep)^k |\les \ep. 
\] 
Therefore  Proposition \ref{prop:higher:mod} leads to the improved bound  
\begin{equation*}
	|\pa_t^3 \ga|\les
	\ep^2(1+\|v\|_{L^\infty})+\ep\|\pa_\la\phi_S X^2v\|_{L^2}\les
	\ep^2.
\end{equation*}
To show energy estimate for $DD\tilde{v}$, we commute the equation \eqref{eq:tilde:v} for $\tilde{v}$ with $D_X$ twice. We first compute that 
\begin{align*}
	L_\ep D_XD_X \tilde{v} & = [L_\ep, D_XD_X] \tilde{v}+ D_X D_X L_\ep \tilde{v} \\ 
	&=[\Box_{A^\ep}, D_XD_X]\tilde{v}+[2i \pa_{\mu}\Th\cdot D^\mu, D_X D_X]
	\tilde{v}+ [f_{\om_0}^{p-1}, D_X D_X] \tilde{v} \\ 
	&\quad +(p-1)  [f_{\om_0}^{p-1}, D_XD_X]\tilde{v}_1-D_XD_X( \mathcal{N}(\tilde{\la})+\tilde{K}).
\end{align*} 
We have the covariant  commutator identity 
\begin{align*}
	[\Box_{A^\ep}, D_\mu D_\nu]&=2i(F_{l\mu}^\ep D^l D_{\nu}+F_{l\nu}^\ep D^l D_{\mu})+i(\pa^l F_{l\mu}^\ep D_\nu+\pa^l F_{l\nu}^\ep D_\mu)\\ 
	&\quad +2i \pa_\mu F_{l\nu}^\ep D^l+ i \pa_\mu \pa^l F_{l\nu}^\ep.
\end{align*}
Thus  by using the estimate \eqref{eq:tildeA:bd} for the connection field $\tilde{A}$ and the energy estimate for $\tilde{v}$, $D\tilde{v}$ obtained previously, we can bound that 
\begin{align*}
	\|  [\Box_{A^\ep}, D_XD_X]\tilde{v}\|_{L^2}  & \les \ep^2 \|  D  D_{X}\tilde{v}\|_{L^2}+  \| \pa^2 \tilde{A}^\ep  D \tilde{v}\|_{L^2}+ \|\pa^3\tilde{A}^\ep \tilde{v} \|_{L^2} \\ 
	&\les \ep^2 \|  D  D \tilde{v}\|_{L^2}+  \| \pa^2 \tilde{A}^\ep \|_{L^4} \| D \tilde{v}\|_{L^4}+ \|\pa^3\tilde{A}^\ep \|_{L^2} \|\tilde{v} \|_{L^\infty} \\ 
	&\les   \ep^3 +  \| \pa^2 \tilde{A}^\ep \|_{L^4} \| D \tilde{v}\|_{L^4}+ \|\pa^3\tilde{A}^\ep \|_{L^2} \|\tilde{v} \|_{L^\infty}\\ 
	& \les \ep^3.
\end{align*}
Similarly we can compute that 
\begin{align*}
	[\pa_{\mu}\Th\cdot D^\mu, D_X D_X]\tilde{v}&=\pa_\mu\Th [D^\mu, D_X D_X]\tilde{v}-X^2\pa_\mu\Th \cdot D^\mu \tilde{v}-2X\pa_\mu\Th \cdot D_X D^\mu\tilde{v}\\ 
	& =i\pa_\mu\Th (2(F^\ep)^\mu_{\ X}D_X\tilde{v}+X (F^\ep)^\mu_{\ X}\tilde{v}) -X^2\pa_\mu\Th \cdot D^\mu \tilde{v}-2X\pa_\mu\Th \cdot D_X D^\mu\tilde{v}.
\end{align*}
Thus we can estimate that 
\begin{align*}
	\|  [2i\pa_{\mu}\Th\cdot D^\mu, D_X D_X]\tilde{v} \|_{L^2}  & \les \ep^2 \|    D_{X}\tilde{v}\|_{L^2}+  \| \pa^2 \tilde{A}^\ep  \tilde{v}\|_{L^2}+ \| (|\dot\ga|^2+|\ddot{\gamma}|)  D^\mu \tilde{v} \|_{L^2} +  \|  |\dot \gamma |\tilde{v}\|_{L^2}\\ 
	&\les   \ep^3 +  \| \pa^2 \tilde{A}^\ep \|_{L^4} \| D \tilde{v}\|_{L^4} \\ 
	& \les \ep^3.
\end{align*}
Next notice that 
\begin{align*}
	|X( z(x;\tilde{\la}))|\les \ep,\quad [f_{\om_0}^{p-1}, D_XD_X] =-X^2 (f_{\om_0}^{p-1}) \tilde{v}-2X(f_{\om_0}^{p-1})D_X\tilde{v}.
\end{align*}
Hence we have 
\begin{align*}
	\|  [ f_{\om_0}^{p-1}, D_X D_X]\tilde{v} \|_{L^2} +\|[f_{\om_0}^{p-1}, D_XD_X]\tilde{v}_1\|_{L^2}  & \les \ep (  \|    D_{X}\tilde{v}\|_{L^2}+      \|   \tilde{v}\|_{L^2})  \les \ep^2.
\end{align*}
For  $D_XD_X\tilde{K}$,  we rely on the above estimates  for $\dot \gamma$, $\ddot{\gamma}$ and $\pa_t^3\gamma$. We show that 
\begin{align*}
	|D_XD_X\tilde{K}|&=\left|D_X D_X\left(-\pa_{\tilde{\la}}\phi_S(x;\tilde{\la})\cdot \pa_t V(\la)+ (i\Box \Th-\pa^\mu \Th\pa_\mu\Th+\om_0^2)\tilde{v} \right)\right|\\ 
	&\les  \ep^2 (|\tilde{v}|+|D_X\tilde{v}|+|D_XD_X\tilde{v}|+|D_X D_X \pa_{\tilde{\la}}\phi_S|+|  D_X \pa_{\tilde{\la}}\phi_S|+| \pa_{\tilde{\la}}\phi_S|),
\end{align*}
which in particular implies that 
\begin{align*}
	\|D_XD_X\tilde{K}\|_{L^2}\les \ep^2. 
\end{align*}
Finally for the nonlinearity $\mathcal{N}$, in view of the asymptotic behavior of the ground state $f_{\om_0}$, we first show that 
\begin{align*}
	|D_X f_{\om_0}|&\les |X f_{\om_0}|+|A^\ep|f_{\om_0}\les \ep +\ep (1+|x|) f_{\om_0}\les \ep + \ep (1+|z|+|\eta|) f_{\om_0}\les \ep,\\ 
	|D_X D_X f_{\om_0}| & \les |X^2 f_{\om_0}| +|A^\ep| |Xf_{\om_0}|+|A^\ep|^2 f_{\om_0} \les \ep. 
\end{align*}
Thus by 
using Lemma \ref{lem:Y:N},  we can estimate that 
\begin{align*}
	\|    D_XD_X\mathcal{N}\|_{L^2} 
	&\les  \| (1+\|\tilde{v}\|_{L^\infty}+ |D_{X} \ln
	f_{\om_0}| )( |D_{X}  f_{\om_0}|^2 +|D_{X}
	\tilde{v}|^2 +|\tilde{v}|^2) \|_{L^2} \\ 
	& \quad +\|(|\tilde{v}|+|\tilde{v}|^{p-1})(|D_{X}D_{X} f_{\om_0}|+|D_{X}D_{X}\tilde{v}|) \|_{L^2} \\ 
	&\les  \ep^2+\|   D_{X}
	\tilde{v}\|_{L^4}^2 +\|\tilde{v} \|_{L^4}^2+ \ep \| |D_{X}D_{X} f_{\om_0}| +|D_{X}D_{X}\tilde{v}|  \|_{L^2} \\ 
	&\les \ep^2.
\end{align*}
Combining all the above estimates, we have shown that 
\begin{align*}
	\|   L_\ep D_X D_X\tilde{v} \|_{L^2}\les  \ep^2. 
\end{align*}
Then the energy estimate \eqref{eq:ee:Lep} indicates that 
\begin{align*}
	\|DD_XD_X\tilde{v}\|_{L^2}^2 & \les \ep^2+|\int_0^t \int_{\mathbb{R}^3}  \l  L_\ep D_X D_X\tilde{v} , D_X D_XD_X\tilde{v}\r dxdt|\\ 
	&\les \ep^2+ \ep^2\int_0^t   \|D_X D_XD_X\tilde{v}\|_{L^2}     ds,
\end{align*}
which, combined with Gronwall's inequality leads to the second order energy estimate  
\begin{align*}
	\|DD_XD_X\tilde{v}\|_{L^2}^2 \les \ep^2,\quad \forall t\leq T/\ep. 
\end{align*}
Now we go back to the equation for $D_X\tilde{v}$ and make use of similar elliptic estimate. We can show that 
\begin{align*}
	\|D_jD_kD_X\tilde{v}\|_{L^2} & \les \| L_\ep D_X\tilde{v}\|_{L^2}+ \|(L_\ep-\Box_{A^\ep})D_X \tilde{v}\|_{L^2}+\|  DD_XD_X\tilde{v}\|_{L^2}+\||F^\ep|D_X\tilde{v}\|_{L^2} \\ 
	&\les \ep+  (\ep^2+\ep\|DD_X\tilde{v}\|_{L^2}) + \|  D_X \tilde{v}\|_{L^2} +\|  D^\mu D_X
	\tilde{v}\|_{L^2}\\ 
	&\les \ep.  
\end{align*}
Here during the proof for the energy estimate for $D_X\tilde{v}$ in the previous section, we have shown that 
\[
\| L_\ep D_X\tilde{v}\|_{L^2}\les \ep^2+\ep\|DD_X\tilde{v}\|_{L^2}\les \ep^2. 
\]
We therefore can conclude that  
\begin{equation*}
	\|DDD_X\tilde{v}\|_{L^2}\les \ep, \quad \forall t\leq T/\ep.
\end{equation*} 
To derive the full third order energy estimate for $\tilde{v}$, that is $D^3\tilde{v}$, commute the equation \eqref{eq:tilde:v} for $\tilde{v}$ with the covariant derivative $D$. We can estimate that 
\begin{align*}
	\| \Box_{A^\ep} D \tilde{v}\|_{L^2} &\les   \| [\Box_{A^\ep}, D ]\tilde{v}\|_{L^2}+ \|D ( \mathcal{N}(\tilde{\la})+\tilde{K})\|_{L^2}+ (m^2-\om_0^2)\|D\tilde{v}\|_{L^2} \\ 
	&\quad +\|D(2i\pa_{\mu}\Th\cdot D^\mu
	v+f_{\om_0}^{p-1}(z)v+(p-1)f_{\om_0}^{p-1}(z)v_1)\|_{L^2} \\ 
	&\les \ep+ \| 2i F_{\nu\mu}^\ep  D^\nu \tilde{v} +i \pa^\nu  F_{\nu \mu}^\ep \tilde{v} \|_{L^2}+ \|(|\tilde{v}|+|\tilde{v}|^{p-1})(|D 
	f_\om|+|D \tilde{ v}|+|\tilde{v}|) \|_{L^2} \\ 
	&\quad + \| D(-\pa_{\tilde{\la}}\phi_S(x;\tilde{\la})\cdot \pa_t V(\la) )\|_{L^2} \\ 
	&\les \ep + \|   D^\nu \tilde{v} \|_{L^2} +  \|\pa^2 \tilde{A}^\ep  \|_{L^2}+ \ep \| |D 
	f_\om|+|D \tilde{ v}|+|\tilde{v}| \|_{L^2} + |\dot\gamma| +|\ddot{\gamma}| \les \ep .
\end{align*} 
Now by using elliptic estimate again, we have 
\begin{align*}
	\|D_j D_k D\tilde{v}\|_{L^2}&\les \|\Box_{A^\ep}D\tilde{v}\|_{L^2}+\|D_X D_X D\tilde{v}\|_{L^2}+\|u_0^k (2 D_k D_X- iF^\ep(X, \pa_k))D\tilde{v}\|_{L^2} \\ 
	&\les \ep + \|D_X D D_X \tilde{v}\|_{L^2}+\|D [D,  D_X]\tilde{v}\|_{L^2}+\|DD D_X\tilde{v}\|_{L^2} \\ 
	&\les \ep +\|D(i F^\ep \tilde{v})\|_{L^2}\les \ep. 
\end{align*}
This together with the above estimate for $DDD_X\tilde{v}$ is sufficient to conclude the third order energy estimate 
\[
\|DDD\tilde{v}\|_{L^2}\les \ep, \quad t\leq T/\ep. 
\]
We thus have shown Proposition \ref{pro:phi:HSob}. 
%%%%%%%%%%%%%%%%
%%%%%%%%%%%%%%%%
\section{Energy estimates for electromagnetic field}\label{recover EM}
Under the Lorentz gauge condition, the connection field $\tilde{A}^{\ep}=A^\ep-A^\ep_b$ verifies the following wave equation 
\begin{align*}
	\pa^\mu F^\ep_{\mu\nu}=\Box A^{\ep}_{\nu}-\pa_\nu\pa^\mu A^\ep_\mu =\Box \tilde{A}^\ep =-\delta^2\ep^2 J[\phi]_{\nu}.
\end{align*}
Since the initial data are given in terms of the initial electric field and the magnetic field together with the gauge invariant norm of the scalar field, to obtain energy estimates for the connection field $\tilde{A}^\ep$, we need first assign appropriate initial data for $\tilde{A}^\ep$, which is consistent with the Lorentz gauge condition. For this purpose, note that for any free wave solution $\chi$ such that $\Box\chi=0$, the new connection field $\tilde{A}-d\chi$ still verifies the Lorentz gauge condition. For any given connection field $\tilde{A}^\ep$ verifying the Lorentz gauge condition, let $\chi$ be the solution to the Cauchy problem of the linear wave equation 
\begin{align*}
	\Box\chi=0, \quad \chi(0, x)=\Delta^{-1} \pa_t\tilde{A}^\ep_0(0, x),\quad \pa_t\chi(0, x)=\Delta^{-1}\pa_t \pa_j \tilde{A}^\ep_j(0, x). 
\end{align*}
For this new connection field $\tilde{A}^\ep - d\chi$, it holds that 
\begin{align*}
	\pa_t(\tilde{A}^\ep_0-\pa_t\chi)|_{t=0}=0,\quad \pa_t \pa_j(\tilde{A}^\ep_j-\pa_j\chi)|_{t=0}=0.
\end{align*}
In particular for the initial data for the connection field $\tilde{A}^\ep$, we may require that on the initial hypersurface $\{t=0\}$ it holds that 
\begin{align*}
	\pa_t (\tilde{A}^\ep)_0(0, x)=0,\quad \pa_t\pa^j (\tilde{A}^\ep)_j(0, x)=0. 
\end{align*}
Thus the Lorentz gauge condition also indicates that 
\begin{align*}
	\pa_j (\tilde{A}^\ep)^j(0, x)=\pa_t(\tilde{A}^\ep)_0(0, x)=0. 
\end{align*}
Now the compatibility condition for the electric field $E^\ep(0)$ shows that  
\begin{align*}
	div(E^\ep(0)) &= \pa_j(\pa_t A^\ep_j-\pa_j A^\ep_0)=\pa_j(\pa_t (\tilde{A}^\ep)_j-\pa_j \tilde{A}^\ep_0)=-\Delta (\tilde{A}^\ep)_0=-\delta^2\ep^2 \Im(\phi_0\cdot\overline{\phi}_1).
\end{align*}
Since $\phi_0, \phi_1$ is bounded in the weighted energy estimate, standard elliptic estimate indicates that 
\begin{align*}
	\|\tilde{A}^\ep_0(0, x)\|_{L^\infty}+\|\nabla^{k+1} \tilde{A}^\ep_0(0, x)\|_{L^2}\leq 10\delta^2\ep^2\leq 10\ep^2,\quad  k\leq 2. 
\end{align*}
By the definition of magnetic field, we have 
\begin{align*}
	\nabla\times (\nabla\times \bar{\tilde{A}}^\ep)= \nabla\times (B^\ep(0)-\ep^2(0, 0, 1))=-\Delta \bar{\tilde{A}}^\ep+\nabla (\nabla \cdot \bar{\tilde{A}}^\ep)= -\Delta \bar{\tilde{A}}^\ep.
\end{align*}
Here $\bar{\tilde{A}}^\ep= (\tilde{A}^\ep_1, \tilde{A}^\ep_2, \tilde{A}^\ep_3)$. Again elliptic estimate together with the assumption on the initial magnetic field implies that 
\begin{align*}
	\| \tilde{A}^\ep_j(0, x)\|_{L^\infty}+\|\nabla^{k+1} \tilde{A}^\ep_j(0, x)\|_{L^2}\les \delta^2\ep^2,\quad  k\leq 2. 
\end{align*}
For the time derivative of the connection field $\tilde{A}^\ep_j$ on the initial hypersurface, using the definition of the electric field, we have 
\begin{align*}
	\|\nabla^k \pa_t \tilde{A}^\ep_j(0, x)\|_{L^2}\leq \|\nabla^k(\pa_t \tilde{A}^\ep_j-\pa_j \tilde{A}^\ep_0)\|_{L^2}+\|\nabla^k \pa_j \tilde{A}^\ep_0 \|_{L^2}\leq \|\nabla^k E^\ep(0)\|_{L^2}+\|\nabla^{k+1} \tilde{A}^\ep_0 \|_{L^2}\les \delta^2\ep^2. 
\end{align*}
This in particular means that the bootstrap assumption \eqref{eq:cond4:A} for the connection field $\tilde{A}^\ep$ holds initially at $t=0$. Next we rely on the above wave equation for the connection field $\tilde{A}^\ep$ together with the higher order Sobolev norms for the scalar field obtained in Proposition \ref{pro:phi:HSob} to improve the bootstrap assumption \eqref{eq:cond4:A}. 

Note that the vector field $X=\pa_t+ (u_0)^k\pa_k$ is timelike. Using the vector field $X$ as multiplier, we obtain the energy estimate for the connection field $\tilde{A}^\ep$
\begin{align*}
	\|\pa \pa^l \tilde{A}^\ep \|_{L_x^2}^2 \les \|\pa\pa^l \tilde{A}^\ep(0, x)\|_{L_x^2}^2 +\delta^2\ep^2 |\int_0^t \int_{\mathbb{R}^3} \pa^l J[\phi]_{\nu} \cdot X \pa^l (\tilde{A}^\ep)_{\nu} dxdt|.
\end{align*}
First we have the identity 
\begin{align*}
	\pa^l J[\phi]_{\nu}=\sum\limits_{l_1+l_2=l} \Im(D^{l_1}\phi\cdot \overline{D^{l_2}D_\nu\phi}).
\end{align*}
According to the decomposition \eqref{eq:decomp:new} associated to the new modulation curve $\tilde{\la}(t)$, we have 
\begin{align*}
	&|\Im(D^{l_1}\phi\cdot \overline{D^{l_2}D_\nu\phi})- \Im(D^{l_1}\phi_S(x;\tilde{\la}(t))\cdot \overline{D^{l_2}D_\nu\phi_S(x;\tilde{\la}(t))})|\\ 
	&\les |D^{l_1}(e^{i\Th}\tilde{v})| |D^{l_2}D_\nu \phi_S|+ |D^{l_1} \phi_S| |D^{l_2}D_\nu (e^{i\Th}\tilde{v})|+|D^{l_1}(e^{i\Th}\tilde{v})| |D^{l_2}D_\nu (e^{i\Th}\tilde{v}) |.
\end{align*}
Since 
\[
\pa_t{\tilde{\la}}=V(\la),\quad |\dot\gamma|+|\ddot{\gamma}|+|\pa_t^3\gamma|\les \ep^2,\quad \forall t\leq T/\ep,
\]
it holds that 
\begin{align*}
	|\pa^k\Th|\les 1,\quad k\leq 4.
\end{align*}
In view of the higher order energy estimates for $\tilde{v}$ in Proposition \ref{pro:phi:HSob} and the bootstrap assumption \eqref{eq:cond4:A} for the connection field $\tilde{A}^\ep$, we have 
\begin{align*}
	\|D\tilde{v}\|_{L^\infty}+\|\tilde{v}\|_{L^\infty} &\les \ep,\quad |Df_{\om_0}|\les |\pa f_{\om_0}|+ |A^\ep |f_{\om_0}\les 1, \\ 
	|D^2 f_{\om_0}|&\les |\pa^2 f_{\om_0}|+|A^\ep| |\pa f_{\om_0}|+(|\pa A^\ep|+|A^\ep|^2)f_{\om_0}\les 1. 
\end{align*}
Thus for $l_1+l_2\leq 2$, we can bound that 
\begin{align*}
	&\sum\limits_{l_1+l_2\leq 2}\|\Im(D^{l_1}\phi\cdot \overline{D^{l_2}D_\nu\phi})- \Im(D^{l_1}\phi_S(x;\tilde{\la}(t))\cdot \overline{D^{l_2}D_\nu\phi_S(x;\tilde{\la}(t))})\|_{L^2}\\ 
	&\les \sum\limits_{l_1+l_2\leq 2} \| |D^{l_1} \tilde{v} | |D^{l_2+1} f_{\om_0}| \|_{L^2}+\| |D^{l_1} f_{\om_0}| |D^{l_2+1} \tilde{v}\|_{L^2}+\| |D^{l_1} \tilde{v} | |D^{l_2+1} \tilde{v} | \|_{L^2} \\ 
	&\les \ep^2+\ep \sum\limits_{l\leq 3}\|D^l f_{\om_0}\|_{L^2}  \les \ep +\ep\|\pa^2\tilde{A}^\ep\|_{L^2}\les \ep. 
\end{align*}
Here we may note that although the connection field $A^\ep_b$ grows linearly, the ground state $f_{\om_0}$ decays exponentially and the center of the soliton verifies the bound 
$$|x_1|+|x_2|\les |\eta|+|z|\les 1+|z|. $$ 
It is the soliton part which we need further consideration. For $l_1+l_2=l\leq 2$, integration by parts leads to  
\begin{align*}
	&\left|\int_0^t \int_{\mathbb{R}^3}  \Im(D^{l_1}\phi_S\cdot \overline{D^{l_2}D_\nu\phi_S}) \cdot X \pa^l (\tilde{A}^\ep)_{\nu} dxdt\right| \\ 
	&\les \left|  \int_{\mathbb{R}^3}  \Im(D^{l_1}\phi_S\cdot \overline{D^{l_2}D_\nu\phi_S}) \cdot   \pa^l (\tilde{A}^\ep)_{\nu} dx |_{t=0}^{t}\right| +\left|\int_0^t \int_{\mathbb{R}^3}  X\Im(D^{l_1}\phi_S\cdot \overline{D^{l_2}D_\nu\phi_S}) \cdot \pa^l (\tilde{A}^\ep)_{\nu} dxdt\right|. 
\end{align*}
For the boundary terms, using Hardy's inequality and the bootstrap assumption \eqref{eq:cond4:A}, we show that 
\begin{align*}
	& \left|\int_{\mathbb{R}^3}  \Im(D^{l_1}\phi_S\cdot \overline{D^{l_2}D_\nu\phi_S}) \cdot   \pa^l (\tilde{A}^\ep)_{\nu} dx\right| \\ 
	&\les \|(1+|x-\xi(t)|)^{-1}\pa^l \tilde{A}^\ep \|_{L^2} \|(1+|z|) |D^{l_1} f_{\om_0}| |D^{l_2+1}f_{\om_0}|\|_{L^2}\\ 
	&\les \| \pa^{l+1} \tilde{A}^\ep \|_{L^2}\les \ep^2.
\end{align*}
Since 
\begin{align*}
	X\Im(D^{l_1}\phi_S\cdot \overline{D^{l_2}D_\nu\phi_S})=\Im(D_XD^{l_1}\phi_S\cdot \overline{D^{l_2}D_\nu\phi_S})+\Im(D^{l_1}\phi_S\cdot \overline{D_XD^{l_2}D_\nu\phi_S})
\end{align*}
and recalling  that 
$$|X(z(x;\tilde{\la}(t)))|=\left|(u+(\rho(0)-1)\frac{u\cdot (u_0+u(0))}{|u_0+u(0)|^2}(u_0+u(0)))\right|\les \ep,$$
for $l_1+l_2\leq 3$, we can estimate that  
\begin{align*}
	&|\Im(D_XD^{l_1}\phi_S\cdot \overline{D^{l_2} \phi_S})| \\ 
	& \les |D_XD^{l_1}f_{\om_0}| |D^{l_2} f_{\om_0}| \\ 
	&\les |D^{l_1}D_X f_{\om_0}| |D^{l_2} f_{\om_0}|  +|[D_X, D^{l_1}]f_{\om_0}||D^{l_2} f_{\om_0}| \\ 
	&\les \left(  |D^{l_1} ( \nabla_z f_{\om_0} X(z)+i\l A^\ep, X\r f_{\om_0})| +\sum\limits_{l\leq l_1-1} |\pa^l F^\ep| |D^{l_1-l}f_{\om_0}|\right) |D^{l_2} f_{\om_0}| \\ 
	&\les \left(  \sum\limits_{l\leq l_1}\ep |D^{l} \nabla_z f_{\om_0} | + |\pa^l\tilde{A}^\ep| |D^{l_1-l}f_{\om_0}| +\sum\limits_{l \leq l_1-1 } |\pa^{l+1}  \tilde{A}^\ep| |D^{l_1-l}f_{\om_0}| \right) |D^{l_2} f_{\om_0}| \\ 
	&\les \left(  \sum\limits_{l\leq l_1}\ep |D^{l} \nabla_z f_{\om_0} |  +\sum\limits_{l \leq l_1-1 } |\pa^{l+1}  \tilde{A}^\ep| |D^{l_1-l}f_{\om_0}| \right) |D^{l_2} f_{\om_0}|. 
\end{align*}
We emphasize here that this is the reason we need to use the new modulation curve $\tilde{\la}$ in order to avoid the fourth order derivative of the original modulation curve $\la(t)$. 
Now note that $$|z|\les |x-\xi(t)|\les |z|.$$
 We also have the bound 
\begin{align*}
	\| (1+|x-\xi(t)|) D^l f_{\om_0}(z)\|_{L^2}\les 1,\quad \forall l\leq 4.
\end{align*}
We therefore can bound that 
\begin{align*}
	&\|(1+|x-\xi(t)|) |\Im(D_XD^{l_1}\phi_S\cdot \overline{D^{l_2} \phi_S})|\|_{L^2} \\ 
	&\les \ep \sum\limits_{l\leq 3}\|(1+|x-\xi(t)|)D^l \nabla_z f_{\om_0}\|_{L^2}+ \|(1+|x-\xi(t)|)D^l f_{\om_0}\|_{L^2}\\ 
	&+\sum\limits_{0\leq l\leq l_1-1}\|\pa^{l+1}\tilde{A}^\ep (1+|x-\xi(t)|)D^{l_1-l}f_{\om_0}D^{l_2 }f_{\om_0} \|_{L^2}\\ 
	&\les \ep +\sum\limits_{l\leq 2}\|\pa^{l+1}\tilde{A}^\ep   \|_{L^2}\les \ep. 
\end{align*}
Here we used the fact that 
\begin{align*}
	(1+|x-\xi(t)|) |D^l f_{\om_0}(z)|\les 1,\quad \forall l\leq 2 
\end{align*}
and the last step follows from the bootstrap assumption \eqref{eq:cond4:A}. Therefore we conclude that 
\begin{align*}
	& |\int_{\mathbb{R}^3}  X\Im(D^{l_1}\phi_S\cdot \overline{D^{l_2}D_\nu\phi_S}) \cdot \pa^l (\tilde{A}^\ep)_{\nu} dx| \\ 
	&\les \| (1+|x-\xi(t)|^{-1})\pa^l \tilde{A}^\ep\|_{L^2} \|(1+|x-\xi(t)|^{-1}) X\Im(D^{l_1}\phi_S\cdot \overline{D^{l_2}D_\nu\phi_S}) \| \\ 
	&\les  \sum\limits_{l_1+l_2\leq l}\| \pa^{l+1} \tilde{A}^\ep\|_{L^2} \|(1+|x-\xi(t)|) |\Im(D_XD^{l_1}\phi_S\cdot \overline{D^{l_2} \phi_S})|\|_{L^2} \\ 
	&\les \ep \sum\limits_{l \leq 2}\| \pa^{l+1} \tilde{A}^\ep\|_{L^2}\les \ep^3.
\end{align*}
Combining all the above estimates, the energy estimate for $\pa^l \tilde{A}^\ep$ then leads to 
\begin{align*}
	\|\pa \pa^l \tilde{A}^\ep \|_{L_x^2}^2 &\les \delta^4\ep^4  +\delta^2\ep^2 (\ep^3 +\int_0^t \ep \|X\pa^l \tilde{A}^\ep\|_{L^2} +\ep^3 ds ) \\ 
	&\les \delta^2 \ep^2(\ep^3+(T/\ep) \ep^3 )\les \delta^2 \ep^4 .
\end{align*}
For sufficiently small $\delta$, depending only on $T$, $\la_0$, $m$ and $p$, we can improve the bootstrap assumption \eqref{eq:cond4:A}. 

 %\bibliography{shiwu}{}

\begin{thebibliography}{10}

\bibitem{exsolitonLions}
H.~Berestycki and P.-L. Lions.
\newblock Nonlinear scalar field equations. {I}. {E}xistence of a ground state.
\newblock {\em Arch. Rational Mech. Anal.}, 82(4):313--345, 1983.

\bibitem{manysolLions}
H.~Berestycki and P.-L. Lions.
\newblock Nonlinear scalar field equations. {II}. {E}xistence of infinitely
  many solutions.
\newblock {\em Arch. Rational Mech. Anal.}, 82(4):347--375, 1983.

\bibitem{odelions}
H.~Berestycki, P.-L. Lions, and L.~A. Peletier.
\newblock An {ODE} approach to the existence of positive solutions for
  semilinear problems in {${\bf R}^{N}$}.
\newblock {\em Indiana Univ. Math. J.}, 30(1):141--157, 1981.

\bibitem{Chengong24:KG:stab:soliton}
G.~Chen and J.~Jendrej.
\newblock Asymptotic stability and classification of multi-solitons for
  {K}lein-{G}ordon equations.
\newblock {\em Comm. Math. Phys.}, 405(1):Paper No. 7, 47, 2024.

\bibitem{stableShatah1}
M.~Grillakis, J.~Shatah, and W.~Strauss.
\newblock Stability theory of solitary waves in the presence of symmetry. {I}.
\newblock {\em J. Funct. Anal.}, 74(1):160--197, 1987.

\bibitem{stableShatah2}
M.~Grillakis, J.~Shatah, and W.~Strauss.
\newblock Stability theory of solitary waves in the presence of symmetry. {II}.
\newblock {\em J. Funct. Anal.}, 94(2):308--348, 1990.

\bibitem{Hintz24:BH:geo:1}
P.~Hintz.
\newblock {Gluing small black holes along timelike geodesics I: formal
  solution}.
\newblock 2023.
\newblock ar{X}iv:2306.07409.

\bibitem{Hintz24:BH:geo:2}
P.~Hintz.
\newblock {Gluing small black holes along timelike geodesics II: uniform
  analysis on glued spacetimes }.
\newblock 2024.
\newblock ar{X}iv:2408.06712.

\bibitem{Hintz24:BH:geo:3}
P.~Hintz.
\newblock {Gluing small black holes along timelike geodesics III: construction
  of true solutions and extreme mass ratio mergers}.
\newblock 2024.
\newblock ar{X}iv:2408.06715.

\bibitem{Yang:mMKG}
S.~Klainerman, Q.~Wang, and S.~Yang.
\newblock Global solution for massive {M}axwell-{K}lein-{G}ordon equations.
\newblock {\em Comm. Pure Appl. Math.}, 73(1):63--109, 2020.

\bibitem{Eamonn06:MKG}
E.~Long.
\newblock Existence and stability of solitary waves in non-linear
  {K}lein-{G}ordon-{M}axwell equations.
\newblock {\em Rev. Math. Phys.}, 18(7):747--779, 2006.

\bibitem{Stuart09:MKG}
E.~Long and D.~Stuart.
\newblock Effective dynamics for solitons in the nonlinear
  {K}lein-{G}ordon-{M}axwell system and the {L}orentz force law.
\newblock {\em Rev. Math. Phys.}, 21(4):459--510, 2009.

\bibitem{uniqueSolMcLeod}
K.~McLeod.
\newblock Uniqueness of positive radial solutions of {$\Delta u+f(u)=0$} in
  {${\bf R}^n$}. {II}.
\newblock {\em Trans. Amer. Math. Soc.}, 339(2):495--505, 1993.

\bibitem{decaySerrin}
L.~A. Peletier and J.~Serrin.
\newblock Uniqueness of positive solutions of semilinear equations in {${\bf
  R}^{n}$}.
\newblock {\em Arch. Rational Mech. Anal.}, 81(2):181--197, 1983.

\bibitem{stableShatah}
J.~Shatah.
\newblock Stable standing waves of nonlinear {K}lein-{G}ordon equations.
\newblock {\em Comm. Math. Phys.}, 91(3):313--327, 1983.

\bibitem{unstableshatah}
J.~Shatah.
\newblock Unstable ground state of nonlinear {K}lein-{G}ordon equations.
\newblock {\em Trans. Amer. Math. Soc.}, 290(2):701--710, 1985.

\bibitem{unstable2shatah}
J.~Shatah and W.~Strauss.
\newblock Instability of nonlinear bound states.
\newblock {\em Comm. Math. Phys.}, 100(2):173--190, 1985.

\bibitem{exsoliStrauss}
W.~A. Strauss.
\newblock Existence of solitary waves in higher dimensions.
\newblock {\em Comm. Math. Phys.}, 55(2):149--162, 1977.

\bibitem{moduStuart}
D.M.A Stuart.
\newblock Modulational approach to stability of non-topological solitons in
  semilinear wave equations.
\newblock {\em J. Math. Pures Appl. (9)}, 80(1):51--83, 2001.

\bibitem{psedoStuart}
D.M.A Stuart.
\newblock The geodesic hypothesis and non-topological solitons on
  pseudo-{R}iemannian manifolds.
\newblock {\em Ann. Sci. \'Ecole Norm. Sup. (4)}, 37(2):312--362, 2004.

\bibitem{einsteinStuart}
D.M.A Stuart.
\newblock Geodesics and the {E}instein nonlinear wave system.
\newblock {\em J. Math. Pures Appl. (9)}, 83(5):541--587, 2004.

\bibitem{ModulWeinstein}
M.~I. Weinstein.
\newblock Modulational stability of ground states of nonlinear {S}chr\"odinger
  equations.
\newblock {\em SIAM J. Math. Anal.}, 16(3):472--491, 1985.

\bibitem{yang4}
S.~Yang.
\newblock On the geodesic hypothesis in general relativity.
\newblock {\em Comm. Math. Phys.}, 325(3):997--1062, 2014.

\bibitem{YangYu:MKG:smooth}
S.~Yang and P.~Yu.
\newblock On global dynamics of the {M}axwell-{K}lein-{G}ordon equations.
\newblock {\em Camb. J. Math.}, 7(4):365--467, 2019.

\end{thebibliography}
%\bibliographystyle{plain}

\bigskip{
	\footnotesize%
	
	\addvspace{\medskipamount}
	\textsc{School of Mathematics and Statistics, Wuhan University, Wuhan, China} \par
	\textit{E-mail address}: \texttt{shuang.m@whu.edu.cn}
	
	\addvspace{\medskipamount}
	\textsc{Beijing International Center for Mathematical Research, Peking University, Beijing, China} \par
	\textit{E-mail address}: \texttt{shiwuyang@math.pku.edu.cn}
	
	\addvspace{\medskipamount}
	\textsc{Department of Mathematical Sciences, Tsinghua University, Beijing, China} \par
	\textit{E-mail address}: \texttt{yupin@mail.tsinghua.edu.cn}
	}

\end{document}